\definecolor{purple}{rgb}{0.65, 0, 1}
\definecolor{pink}{rgb}{1, 0, 0.9}
\definecolor{orange}{rgb}{1,.5,0}
\numberwithin{equation}{section}
\newtheorem{theorem}{Theorem}[section]
\newtheorem{lemma}[theorem]{Lemma}
\newtheorem{definition}[theorem]{Definition}
\newtheorem{proposition}[theorem]{Proposition}
\newcommand{\comments}[1]{}
\newcounter{DTheoremCounter}
\newcounter{DAppCounter}
\newcounter{claimCounter}
\newcommand{\commentout}[1]{{}}
\definecolor{purple}{rgb}{0.65, 0, 1}
\definecolor{green}{rgb}{0, 1, 0}
\definecolor{orange}{rgb}{1,.5,0}
\newcommand{\W}{{w}}
\newcommand{\V}{{v}}
\newcommand{\tv}{{\tilde v}}
\def \Rm {\mathbb R}
\def \Tm {\mathbb T}
\def \eps {\epsilon}
\newcommand{\vertiii}[1]{{\vert\kern-0.25ex\vert\kern-0.25ex\vert #1
    \vert\kern-0.25ex\vert\kern-0.25ex\vert}}
\begin{document}

\title{Finite time singularity formation \\ for the modified SQG patch equation}
\author{Alexander Kiselev\footnote{Department of Mathematics, Rice
    University, 6100 S. Main St.,
Houston, TX 77005-1892; kiselev@rice.edu}
\and Lenya Ryzhik\footnote{Department of Mathematics, Stanford
  University, Stanford, CA 94305; ryzhik@math.stanford.edu}
\and Yao Yao\footnote{School of Mathematics, Georgia Institute of Technology, 686 Cherry Street, Atlanta, GA 30332-0160; yaoyao@math.gatech.edu}
\and Andrej Zlato\v s\footnote{Department of Mathematics, University of
  Wisconsin, 480 Lincoln Dr.
Madison, WI 53706-1325; zlatos@math.wisc.edu}}

\maketitle

\begin{abstract}
It is well known that the incompressible Euler equations in two dimensions
have globally regular solutions.
The inviscid surface quasi-geostrophic (SQG) equation has a Biot-Savart law which
is one derivative less regular than
in the Euler case, and the question of global regularity
for its solutions is still open.
We study here the patch dynamics in the half-plane for a
family of active scalars which interpolates between these two equations, via
a parameter~$\alpha\in[0,\frac 12]$ appearing
in the kernels of their Biot-Savart laws.
The values~$\alpha=0$ and~$\alpha=\frac 12$ correspond to the 2D Euler and  SQG cases,
respectively. We prove global in time regularity for the 2D Euler patch model, even if the patches initially touch the boundary of the half-plane.
On the other hand,  for any sufficiently small $\alpha>0$,
we  exhibit initial data which lead to a singularity in  finite time.
Thus, these results show a phase transition in the behavior of solutions to these equations, and provide a rigorous  foundation for classifying the 2D Euler equations as critical.
\end{abstract}

\section{Introduction}

The question of global regularity of  solutions is still open
for many fundamental equations of fluid dynamics. In the case of the three dimensional
Navier-Stokes and Euler equations, it remains one of the central open problems
of classical mathematical physics and partial differential equations.
Much more is known in two dimensions, though the picture is far from
complete even in that case.
Global regularity of solutions to the 2D incompressible Euler equations in
smooth domains has been known since the works of
Wolibner \cite{Wolibner} and H\"older \cite{Holder}.
However, even in 2D the estimates necessary for the Euler global regularity barely close,
and the best upper bound on the growth of derivatives is double exponential
in time. Recently,  Kiselev and \v Sver\' ak showed that this upper bound is sharp by constructing an example  of a solution
to the 2D Euler equations on a disk whose gradient indeed grows
double exponentially in time \cite{KS}.  Exponential growth on a domain without a boundary (the torus $\Tm^2$)
was recently shown to be possible by Zlato\v s \cite{ZlaEuler}.
Some earlier examples of unbounded growth are due to Yudovich \cite{Yud1,Yud2},
Nadirashvili \cite{Nad}, and Denisov \cite{Den1,Den2}.
In a certain sense that will be made precise below, the 2D Euler equations
may be regarded as critical,
even though we are not aware of a simple scaling argument for such a classification.

\subsubsection*{The SQG and modified SQG equations}

As opposed to the 2D Euler equations, the global regularity vs finite time singularity question
has not been resolved for the two-dimensional
surface quasi-geostrophic (SQG) equation, which appears in atmospheric science models (and shares many of its features with the 3D Euler equation --- see, e.g., \cite{CMT,mb,Rodrigo}).
The SQG equation is given by
\begin{equation}\label{sqg}
\partial_t \omega + (u \cdot \nabla) \omega =0,
\end{equation}
with $\omega(\cdot,0)=\omega_0$
 and the Biot-Savart law for the velocity
\[
u := \nabla^\perp (-\Delta)^{-1/2} \omega,
\]
where $\nabla^\perp := (\partial_{x_2}, -\partial_{x_1}).$
Equation (\ref{sqg}) has the
same form as the 2D Euler equations in the vorticity formulation,
but the latter has the more regular (by one derivative) Biot-Savart law
\[
u := \nabla^\perp (-\Delta)^{-1} \omega.
\]
The SQG equation is usually considered on either $\Rm^2$ or $\Tm^2$, and
the fractional Laplacian can be defined via the Fourier transform.
The equation appears, for instance, in the book \cite{Ped} by Pedlosky  and was first rigorously studied  in the work of
Constantin, Majda and Tabak \cite{CMT} where, in particular,
a closing saddle scenario for a finite time singularity has been suggested.
This scenario and some other related ones have been ruled out in
the later works of C\' ordoba \cite{Cord1} and C\' ordoba and Fefferman \cite{CoFe}.
Also, existence of global weak solutions   was proved by Resnick \cite{Resnick}.

We should mention that a lot of work has focused on the SQG equation and
related active scalars with a fractional dissipation
term of the form $-(-\Delta)^\beta \omega$ on
the right-hand side of~\eqref{sqg}. Global regularity for the critical viscous SQG equation, with $\beta=\frac 12$, was proved
independently by Caffarelli and Vasseur \cite{CV}, and by Kiselev, Nazarov, and Volberg \cite{KNV}
(see also the subsequent works \cite{KN,CoVi,CVT} for alternative proofs).
The global regularity proof is standard for~$\beta\in(\frac 12,1]$ (see e.g. \cite{NJu}), while in the super-critical case~$\beta <\frac 12$
the question of global regularity vs finite time blow-up remains open. The best available result in this direction
is global regularity for the logarithmically super-critical SQG equation by  Dabkowski, Kiselev, Silvestre, and Vicol \cite{DKSV}.

A natural family of active scalars which interpolates between the 2D Euler
and SQG equations is given by \eqref{sqg} with the Biot-Savart law
\[
u := \nabla^\perp (-\Delta)^{-1+\alpha}\omega.
\]
This family  has been called modified or generalized SQG equations in the literature
(see, e.g., \cite{CIW}, or the paper \cite{PHS} by Pierrehumbert, Held, and Swanson  for a geophysical literature reference). The cases $\alpha=0$ and $\alpha=\frac 12$ correspond to
the 2D Euler and SQG equations, respectively.   The question of global regularity
of the solutions with smooth initial data has been open for all $\alpha>0$,
that is, for any of these models which are more singular than the 2D Euler equations.  Ironically, even though the
SQG and the modified~SQG equations are more singular than the 2D Euler equations,
no examples of solutions with unbounded growth of derivatives in time are known.
The best result in this direction is arbitrary bounded growth of high Sobolev
norms on finite time intervals by Kiselev and Nazarov \cite{KN10}.
The reason is that due to nonlinearity and nonlocality of active scalars,
it is difficult to   control the solutions
at large  times, and this task gets harder as the Biot-Savart law becomes more singular.
This issue will be evident in the present paper as well.

 \subsubsection*{Vortex patches}

While the above discussion concerns active scalars with sufficiently smooth initial data, an important class
of solutions to  these equations are vortex patches
\begin{equation*}
\omega(x,t)=\sum_k \theta_k\chi_{\Omega_k(t)}(x).  
\end{equation*}
Here $\theta_j$ are some constants, $\Omega_j(t)$ are (evolving in time)
open sets with non-zero mutual distances and smooth boundaries, and $\chi_D$ denotes the characteristic function
of a domain $D$. Vortex patches model flows with abrupt variations in vorticity, which are common in nature.
Existence and uniqueness of appropriately defined vortex patch solutions to the 2D Euler equations in the whole plane
goes back to the work of  Yudovich \cite{Yudth}, and
regularity in this setting refers to sufficient smoothness of the patch boundaries as well as to the
lack of both self-intersections of each patch boundary and touches of different patches.

Singularity formation for 2D Euler patches had initially been conjectured  based on  the
numerical simulations by Buttke \cite{Butt}, see Majda \cite{Majda} for a discussion.  Later, simulations by Dritschel, McIntyre, and Zabusky \cite{DM,DZ}
questioning the singularity formation prediction appeared;  we refer to \cite{Pulli} for a review of these and related works.
This controversy was settled in 1993, when  Chemin \cite{c} proved that the boundary of a 2D Euler patch remains regular for all times,
with a double exponential upper bound on the temporal growth of its curvature
(see also the work by Bertozzi and Constantin \cite{bc} for a different proof).

The patch problem for the SQG equation is more involved.
Local existence and uniqueness in the class of weak solutions of the special type
\[
\omega(\cdot,t)=\chi_{\{x_2<\varphi(x_1,t)\}},
\]
with $\varphi\in C^\infty$ and periodic in $x_1$, corresponding to a (single patch) initial condition of the same form,
was proved by Rodrigo \cite{Rodrigo}.
For the SQG and modified SQG patches with boundaries which are simple closed $H^3$ curves, local existence was established by
Gancedo~\cite{g} via a study of a contour equation whose solutions parametrize the patch boundary (uniqueness of solutions was also
proved for the contour equation for~$\alpha\in(0,\frac 12)$, although not for the original modified SQG equation).  Local existence of
such contour solutions in the more singular case $\alpha\in(\frac 12,1]$ was obtained by Chae, Constantin, C\' ordoba, Gancedo, and
Wu \cite{CCCGW}.  Existence of splash singularities (touching of exactly  two segments of a patch boundary, which
remains uniformly $H^3$) for the SQG equation was ruled out by Gancedo and Strain \cite{GS}.

A computational study of the SQG and modified SQG patches by C\' ordoba, Fontelos, Mancho, and
Rodrigo \cite{CFMR} (where the patch problem for the modified SQG equation first appeared) suggested a
finite time singularity, with two patches touching each other and simultaneously developing corners at the touching
point. A more careful numerical study
by Mancho \cite{Mancho} suggests involvement of self-similar elements in this singularity formation
process, but its rigorous confirmation and understanding is still lacking.
We note that even local well-posedness is far from trivial for many interface evolution models of fluid dynamics,
see e.g. \cite{CCG} where the Muskat problem is discussed.
We refer to~\cite{CCFGL,CCFGG,Wu1,Wu2} for other recent advances in some of the interface problems of fluid dynamics.

\subsubsection*{Vortex patches in domains with boundaries}

In this paper, we consider the patch evolution for the 2D Euler equations and for the modified SQG equations in
the presence of boundaries.  The latter are important in many
applications, in particular, in the onset of turbulence and in the creation of small scales in the motion of fluids.
The global existence of a single  $C^{1,\gamma}$ patch for the 2D Euler equations
on the half-plane $D:=\Rm\times\Rm^+$ was proved by Depauw \cite{d} when the patch does not touch the boundary
$\partial D$ initially. If it is, then \cite{d} only proved that the patch will remain $C^{1,\gamma}$ for a finite time, while Dutrifoy \cite{d2} proved a result which can be used to obtain global existence in the weaker space $C^{1,s}$ for some $s\in(0,\gamma)$. Uniqueness of solutions in the 2D Euler case follows from the work of Yudovich \cite{Yudth}.

Since we are not aware of a global existence result without a loss of regularity for (either one or multiple)  2D Euler patches
on the half-plane which may touch its boundary,  we will provide a proof of the global existence for such $C^{1,\gamma}$ patches here.
This contrasts with our main goal, proving finite time singularity formation for the modified SQG patch evolution with~$\alpha>0$
in domains with a boundary.  These two results together will then also establish existence of a phase transition in the behavior of solutions at $\alpha=0$.
For the sake of minimizing the technicalities, we do not strive for the greatest generality,
and will consider $H^3$ patches (as in \cite{g,CCCGW,GS}) on the half-plane, with small enough
$\alpha>0$ (that is, slightly more singular than the 2D Euler case $\alpha=0$).
Our initial condition $\omega_0$
will be the difference of characteristic functions of two patches with smooth boundaries.  The patches will initially touch
the boundary of the half-plane and, as was explained above,  the loss of  $H^3$ regularity or  self-intersections of their boundaries, as well as touches of the two patches, will all constitute a singularity.

The possible importance of boundaries in the formation of singularities in fluids has been illustrated by recent numerical simulations of Luo and Hou \cite{LuoHou}, which suggested a new scenario for
singularity formation in the 3D Euler equations.  The flow in this scenario is axi-symmetric  on a cylinder and so, in a way, can be viewed as two-dimensional (see \cite{CHKLSY} for a more detailed discussion).
The rapid growth of the vorticity in these simulations happens on the boundary of the cylinder.
The geometry of the construction we carry out in this work bears some similarity to this scenario, as well as to the geometry of the Kiselev-\v Sver\' ak
example of a solution to the 2D Euler equations with  a
double exponential growth of its vorticity gradient.  In particular, in all three instances, a hyperbolic fixed point of the
flow located on the boundary is involved. However the construction itself and the methods we use are quite different from earlier works.

\subsubsection*{The main results}

Let us now turn to the specifics.
As we said above, we will only consider
modified SQG evolution for small enough $\alpha>0,$ specifically  $\alpha \in (0,\frac 1{24}).$ The constraint $\alpha<\frac 1{24}$ comes from the currently available local well-posedness results,
while the singularity formation argument by itself  allows a somewhat larger value.
The Bio-Savart law for the patch evolution on the half-plane $D := \Rm\times\Rm^+$~is
\[
u=\nabla^\perp (-\Delta)^{-1+\alpha}\omega,
\]
with the Dirichlet Laplacian on $D$, which can also be written as
\begin{equation}
u(x, t) :=  \int_D \left( \frac{(x-y)^\perp}{|x-y|^{2+2\alpha}} -
\frac{(x-\bar y)^\perp}{|x-\bar y|^{2+2\alpha}} \right) \omega(y,t) dy
\label{eq:velocity_law}
\end{equation}
for $x \in \bar D$ (up to a positive pre-factor, which can be dropped without loss due to scaling). We use here the notation
\[
v^{\perp}:=(v_2, -v_1) \qquad\text{and}\qquad \bar v:=(v_1, -v_2)
\]
for $v=(v_1,v_2)$.  The vector field $u$ given by (\ref{eq:velocity_law})
is divergence free and tangential to the boundary~$\partial D$, that is,
\[
\hbox{$u_2(x,t)=0$ when $x_2=0$.}
\]
A traditional approach to the 2D Euler ($\alpha=0$) vortex patch evolution, going back to Yudovich (see~\cite{MP}
for an exposition) is via the corresponding  flow map. The active scalar $\omega$ is advected by~$u$ from (\ref{eq:velocity_law}) via
\begin{equation}\label{1.31}
\omega(x,t) = \omega \left(\Phi^{-1}_t(x),0\right),
\end{equation}
where
\begin{equation}\label{eq:alpha}
\frac{d}{dt}\Phi_t(x) = u\left(\Phi_t(x),t \right) \qquad\text{and}\qquad  \Phi_0(x)=x.
\end{equation}
The initial condition $\omega_0$ for  \eqref{eq:velocity_law}-\eqref{eq:alpha} is patch-like,
\begin{equation}\label{patchlikeid}
\omega_0 = \sum_{k=1}^N \theta_k \chi_{\Omega_{0k}},
\end{equation}
with $\theta_1,\dots,\theta_N\neq 0$ and $\Omega_{01},\dots,\Omega_{0N}\subseteq D$  bounded open sets, whose closures $\overline{\Omega_{0,k}}$ are pairwise disjoint and whose boundaries~$\partial\Omega_{0k}$ are  simple closed curves.

One reason the Yudovich theory works for the 2D Euler equations is that for $\omega$ which is (uniformly in time) in $L^1\cap L^\infty$, the velocity field
$u$ given by (\ref{eq:velocity_law}) with $\alpha=0$ is log-Lipschitz in space, and the flow map $\Phi_t$ is  everywhere well-defined. In our situation,
when $\omega$ is a patch solution and $\alpha>0$, the flow $u$ from (\ref{eq:velocity_law}) is smooth away from the patch boundaries  $\partial\Omega_k(t)$ but is only
H\"older at $\partial\Omega_k(t)$ which is exactly where one needs to use the flow map (see Lemma~\ref{lemma:uniform_u_bound}
for the corresponding H\"older estimate). Thus, the Yudovich definition of the evolution may not be applied directly, as the flow
trajectories need not be unique when $u$ is only H\"older continuous.
We will instead use a natural alternative definition of patch solutions to~(\ref{sqg})-(\ref{eq:velocity_law}), which will be equivalent to the
usual definition in the 2D Euler case, and closely related to the definitions used in earlier works on modified SQG patches.
In order to not interrupt this introduction, we postpone the precise discussion of these points
to Section~\ref{sec:def-flow} --- see Definition~\ref{D.1.1} and the rest of that section.

The following local well-posedness result is proved in the companion paper~\cite{KYZ1}.

\commentout{

 We introduce here the following Definition \ref{D.1.1} which, as we discuss below, encompasses various previously used definitions.  We start with a definition of some  norms of boundaries of domains in $\Rm^2$, letting here $\mathbb T:=[-\pi,\pi]$ with $\pm\pi$ identified.

\begin{definition} \label{D.1.0}
Let  $\Omega\subseteq \mathbb{R}^2$ be a bounded open set whose boundary $\partial\Omega$ is a simple closed $C^{1}$ curve with arc-length $|\partial\Omega|$.  We call a {\it constant speed parametrization} of $\partial\Omega$ any counter-clockwise parametrization $z:\mathbb{T}\to \mathbb{R}^2$ of $\partial\Omega$ with $|z'|\equiv \frac {|\partial\Omega|}{2\pi}$ on $\mathbb T$ (all such $z$ are translations of each other),
and we define $\|\Omega\|_{C^{m,\gamma}}:=\|z\|_{C^{m,\gamma}}$ and $\|\Omega\|_{H^{m}}:=\|z\|_{H^{m}}$.
\end{definition}

{\it Remark.}
It is not difficult to see (using \cite[Lemma 3.4]{KRYZ1}), that an $\Omega$ as above satisfies $\|\Omega\|_{C^{m,\gamma}}<\infty$ (resp.~$\|\Omega\|_{H^{m}}<\infty$) precisely when for some $r>0$, $M<\infty$, and each $x\in\partial\Omega$, the set $\partial\Omega\cap B(x,r)$ is (in the coordinate system centered at $x$ and with axes given by the tangent and normal vectors to $\partial\Omega$ at $x$) the graph of a function with $C^{m,\gamma}$ (resp. $H^m$) norm less than $M$.

Next, let $d_H(\Gamma,\tilde\Gamma)$ be the Hausdorff distance of sets $\Gamma,\tilde\Gamma$, and for a set $\Gamma\subseteq\Rm^2$, vector field $v:\Gamma\to\Rm^2$, and $h\in\Rm$, we let
\[
X_{v}^h[\Gamma]:= \{ x+hv(x)\,:\, x\in\Gamma\}.
\]
Our definition of patch solutions to \eqref{sqg}-\eqref{eq:velocity_law} on the half-plane is now as follows.

\begin{definition}\label{D.1.1}
Let $D:=\Rm\times\Rm^+$, let $\theta_1,\dots,\theta_N\in\Rm\setminus\{0\}$, and for each $t\in[0,T)$, let $\Omega_1(t),\dots,\Omega_N(t)\subseteq D$ be bounded open sets with pairwise disjoint closures whose boundaries $\partial \Omega_k(t)$ are simple closed curves, each  continuous in $t\in[0,T)$ with respect to Hausdorff distance of sets.  Let $\omega(\cdot,t) := \sum_{k=1}^N \theta_k \chi_{\Omega_k(t)}$ and denote $\Omega(t):=\bigcup_{k=1}^N \Omega_k(t)$.

If for  each $t\in(0,T)$ we have
\begin{equation}\label{1.3}
\lim_{h\to 0} \frac{d_H \Big(\partial\Omega(t+h),X_{u(\cdot,t)}^h[\partial\Omega(t)] \Big)}h = 0,
\end{equation}
with $u$ from \eqref{eq:velocity_law},
then $\omega$ is a patch solution to \eqref{sqg}-\eqref{eq:velocity_law} on the time interval $[0,T)$.  If we also have $\sup_{t\in [0,T']} \|\Omega_k(t)\|_{C^{m,\gamma}}<\infty$ (resp.~$\sup_{t\in [0,T']} \|\Omega_k(t)\|_{H^{m}}<\infty$)  for each $k$ and $T'\in(0,T)$,
then $\omega$ is a $C^{m,\gamma}$ (resp.~$H^m$) patch solution to \eqref{sqg}-\eqref{eq:velocity_law} on  $[0,T)$.
\end{definition}

{\it Remarks.}
1.   Continuity of $u$ (due to the last claim in Lemma \ref{lemma:uniform_u_bound} below) and \eqref{1.3} show that for patch solutions, $\partial\Omega$ is moving with velocity $u(x,t)$ at $t\in[0,T)$ and $x\in \partial\Omega(t)$.

2.  We note that our definition encompasses well-known definitions for the 2D Euler equation  in terms of \eqref{eq:alpha} and in terms of the normal velocity at $\partial\Omega$.  Indeed, if $\omega$ satisfies $\partial\Omega_{k}(t) = \Phi_t(\partial\Omega_{k}(0))$ for each $k$ and $t\in[0,T)$, the patches have pairwise disjoint closures, and their boundaries remain simple closed curves, then continuity of $u$, compactness of $\partial\Omega(t)$, and \eqref{eq:alpha} show that $\omega$ is a patch solution to  \eqref{sqg}-\eqref{eq:velocity_law} on  $[0,T)$.  Moreover, if $\partial\Omega(t)$ is $C^1$  and $n_{x,t}$ is the outer unit normal vector at $x\in \partial\Omega(t)$, then \eqref{1.3} is equivalent to motion of $\partial\Omega(t)$ with outer normal velocity $u(x,t)\cdot n_{x,t}$ at each $x\in\partial\Omega(t)$ (which can be defined, e.g., by \eqref{1.3} with $u(\cdot,t)$ replaced by $(u(\cdot,t)\cdot n_{\cdot,t})n_{\cdot,t}$).
However, Definition~\ref{D.1.1} can be stated even if $\Phi_t(x)$ cannot be uniquely defined for some $x\in\partial\Omega(0)$ (when $\alpha>0$, this could even be the case for $x\notin\partial\Omega(0)$, as the hypotheses of Theorem~\ref{T.1.1}(b) below suggest) or when $\partial\Omega(t)$ is not $C^1$.

3. It is not difficult to show (see \cite{KRYZ1}) that $C^1$ patch solutions to \eqref{sqg}-\eqref{eq:velocity_law} are also weak solutions to \eqref{sqg} in the sense that for each $f\in C^1(\bar D)$ we have
\begin{equation} \label{1.6}
\frac d{dt} \int_D \omega(x,t)f(x)dx = \int_D \omega(x,t) [u(x,t)\cdot\nabla f(x)] dx
\end{equation}
for all $t\in(0,T)$, with  both sides continuous in $t$.
Also, weak solutions to \eqref{sqg}-\eqref{eq:velocity_law} which are of the form $\omega(\cdot,t) := \sum_{k=1}^N \theta_k \chi_{\Omega_k(t)}$, with $C^1$ boundaries $\partial\Omega_k(t)$ which move with some continuous velocity $v:\Rm^2\times(0,T)\to\Rm^2$ (in the sense of \eqref{1.3} with $v$ in place of $u$), do satisfy \eqref{1.3} with $u$ (hence they are patch solutions if those boundaries are simple closed curves and the domains have pairwise disjoint closures).
Moreover, $|\Omega_k(t)|=|\Omega_k(0)|$ holds for each $k$ and $t\in[0,T)$.

 In the 2D Euler case $\alpha=0$, it is not difficult to show using standard results  that there is a unique global weak solution $\omega$ to \eqref{sqg} on $D$ with a given $\omega(\cdot,0)$ as in Definition \ref{D.1.1}, and it is of the form $\omega(\cdot, t) = \sum_{k=1}^N \theta_k \chi_{\Omega_{k}(t)}$, with $\partial\Omega_{k}(t) = \Phi_t(\partial\Omega_{k}(0))$ (we spell this argument out in Section~\ref{sec:euler}).  Remark 2 then shows that as long as the patch boundaries remain pairwise disjoint simple closed curves,
 $\omega$ is also a patch solution to  \eqref{sqg}-\eqref{eq:velocity_law}.  As was shown in  \cite{d2}, this is the case for all time when the initial data is one $C^{1,\gamma}$ patch (and we prove here that $\omega$ is also a $C^{1,\gamma}$ patch solution, even in the case of several patches).



On the other hand, our companion paper \cite{KRYZ1} proves local existence and uniqueness of $H^3$ patch solutions to \eqref{sqg}-\eqref{eq:velocity_law} on the half-plane for  small $\alpha>0$
(as well as on $\Rm^2$ for $\alpha\in(0,\frac 12)$).  Recall that uniqueness for patch solutions with $\alpha>0$ was previously only proved within a special class of SQG patches on $\Rm^2$ with $C^\infty$ boundaries in \cite{Rodrigo}.
In \cite{KRYZ1} we also address the relationship of  \eqref{1.3} to the maps  from \eqref{eq:alpha}. Note that since $u$ is obviously smooth away from $\partial\Omega$, $\Phi_t(x)$ remains unique at least until it hits $\partial\Omega$ (in the Euler case, $\Phi_t(x)$ is always unique because $u$ is log-Lipschitz), after which it still exists but need not be unique.   The results from \cite{KRYZ1}  imply that for $\alpha<\frac 14$ and patch solutions with $H^3$ boundaries, this remains true for any $x\in \bar D\setminus\partial\Omega(0)$ as long as the solution remains regular (this will prove useful in the proof of our main blow-up result below).  We collect the relevant results from \cite{KRYZ1} in the following theorem.

}

 \begin{theorem}\label{T.1.1}
(\cite{KYZ1})
If  $\alpha\in(0,\frac 1{24})$, then for each $H^3$ patch-like initial data $\omega_0$,  there exists a unique local $H^3$
patch solution $\omega$ to \eqref{sqg}-\eqref{eq:velocity_law} with $\omega(\cdot,0)=\omega_0$.  Moreover, if the maximal time
$T_\omega$ of existence of $\omega$ is finite, then at $T_\omega$ a singularity forms: either two patches touch, or a patch boundary touches
itself or  loses $H^3$ regularity.
\end{theorem}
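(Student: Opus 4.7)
The plan is to reduce the half-plane patch problem \eqref{sqg}-\eqref{eq:velocity_law} to a contour equation for a constant-speed parametrization of each boundary $\partial\Omega_k(t)$, close an a priori $H^3$ estimate (coupled with arc-chord and inter-patch separation control), and then extract local existence, uniqueness, and the stated blow-up alternative by standard methods.

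For each $k$, let $z_k(\cdot,t):\mathbb{T}\to\bar D$ parametrize $\partial\Omega_k(t)$ counter-clockwise at constant speed. Using the identity $(x-y)^\perp|x-y|^{-2-2\alpha}=\frac{1}{2\alpha}\nabla_y^\perp|x-y|^{-2\alpha}$ (and its analogue for the reflected kernel) and integrating by parts in \eqref{eq:velocity_law}, the velocity at points of $\partial\Omega_k$ becomes a sum of boundary integrals. Adding a tangential term $\lambda_k(\xi,t)\partial_\xi z_k$ chosen to preserve the constant-speed gauge yields the contour equation
\begin{equation*}
\partial_t z_k(\xi,t) = \frac{1}{2\alpha}\sum_{j=1}^N \theta_j\int_{\mathbb{T}}\left(\frac{\partial_\eta z_j(\eta,t)}{|z_k(\xi,t)-z_j(\eta,t)|^{2\alpha}}-\frac{\overline{\partial_\eta z_j(\eta,t)}}{|z_k(\xi,t)-\overline{z_j(\eta,t)}|^{2\alpha}}\right)d\eta+\lambda_k\partial_\xi z_k.
\end{equation*}
The first kernel is the standard modified-SQG contribution; the second comes from the reflection in \eqref{eq:velocity_law} and is the main source of difficulty when a patch touches $\partial D$.

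Next I would close an a priori bound in $H^3$. Differentiating the contour equation three times in $\xi$ and pairing with $\partial_\xi^3 z_k$, the highest-order contribution is a symmetric singular integral with kernel $|\xi-\eta|^{-2\alpha}$ acting on $\partial_\xi^3 z_k$; after symmetrization in $\xi\leftrightarrow\eta$ it has a definite sign and controls a quantity comparable to $\|z_k\|_{\dot H^{7/2-\alpha}}^2$, providing a small fractional gain above $H^3$. The remaining commutator terms are handled by Kato-Ponce, H\"older, and Sobolev embedding, and absorbed into that top-order gain; the constraint $\alpha<\frac{1}{24}$ enters here as the budget for the $\alpha$-losses in several Sobolev exponents arising in these commutator bounds. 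Coupling with ODE-type estimates for the arc-chord quantity $\|F_k\|_{L^\infty}$ (with $F_k(\xi,\eta):=|\xi-\eta|/|z_k(\xi)-z_k(\eta)|$) and for $d_{jk}^{-1}:=\max_{\xi,\eta}|z_j(\xi)-z_k(\eta)|^{-1}$ for $j\neq k$ (and similarly for interactions with the reflected curves $\bar z_k$), which encode the absence of self-intersections and patch collisions, yields a closed differential inequality for
\[
Y(t):=\sum_k\bigl(\|z_k(\cdot,t)\|_{H^3}^2+\|F_k\|_{L^\infty}^2\bigr)+\sum_{j\neq k}d_{jk}(t)^{-2},
\]
which produces an a priori bound on a time interval $[0,T_*(Y(0)))$.

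From here, existence of an $H^3$ contour solution on $[0,T_*)$ follows from a standard mollification/compactness argument using the uniform $Y$-bound, and uniqueness is proved by a contraction in a lower-order space (say $H^{5/2}$) via analogous commutator estimates; these contour solutions correspond, by construction, to $H^3$ patch solutions in the sense of Definition \ref{D.1.1}. For the continuation criterion, if $T_\omega<\infty$ while $Y$ remained bounded up to $T_\omega$, the construction could be restarted past $T_\omega$ using a limiting configuration at time $T_\omega$ as new initial data, contradicting maximality; hence at $T_\omega$ one of $\|z_k\|_{H^3}\to\infty$ (loss of $H^3$ regularity), $\|F_k\|_\infty\to\infty$ (self-intersection of $\partial\Omega_k$), or $d_{jk}\to 0$ (touching of two patches) must occur, which is exactly the stated trichotomy. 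The principal obstacle is that patches are allowed to touch $\partial D$, so the reflection kernel in \eqref{eq:velocity_law} cannot be treated as a smooth perturbation at top order; a clean way around this is to enlarge the system to $\{z_k\}\cup\{\bar z_k\}$ and exploit the reflection symmetry of the half-plane problem, so that a patch tangent to $\partial D$ becomes, in the enlarged system, a patch meeting its mirror image — a configuration compatible with $H^3$ regularity for as long as the arc-chord and pairwise-distance functionals in the enlarged system remain bounded. Making all commutator constants uniform in this degeneration is what ultimately forces the technical restriction $\alpha<\frac{1}{24}$.
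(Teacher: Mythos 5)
This statement is not proved in the present paper at all: it is quoted verbatim from the companion paper \cite{KYZ1} (``Local regularity for the modified SQG patch equation''), so there is no in-paper proof to compare against. Your overall strategy --- rewrite the Biot--Savart law as a contour equation for constant-speed parametrizations, close an $H^3$ energy estimate exploiting the sign-definite top-order term (which indeed yields a gain of order $\dot H^{7/2-\alpha}$), couple it with arc-chord and inter-patch separation functionals, and read off the blow-up trichotomy from the continuation criterion --- is the same general approach as \cite{KYZ1} and as Gancedo's earlier work on $\Rm^2$. However, two points in your sketch are genuine gaps rather than omitted routine work.

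First, your proposed fix for the boundary difficulty is circular. The initial data of interest (and the data used in Theorem \ref{main1234}) are patches whose closures \emph{touch} $\partial D$, so if you enlarge the system to $\{z_k\}\cup\{\bar z_k\}$ and control it by the pairwise-distance functional $d_{jk}^{-1}$, then the term measuring the separation between $z_k$ and its reflection $\bar z_k$ is already infinite at $t=0$; your functional $Y(0)=\infty$ and the a priori estimate never starts. The whole point of the half-plane result is that a patch meeting its own mirror image along $\partial D$ must \emph{not} be counted as a singular configuration, so the reflected kernel $|z_k(\xi)-\overline{z_k(\eta)}|^{-2\alpha}$ has to be estimated directly near the axis, using $|x-\bar y|\ge|x-y|$ and the cancellation between the direct and reflected contributions --- this is where the restrictive condition $\alpha<\frac1{24}$ actually comes from, not from generic commutator losses. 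Second, uniqueness of solutions to the contour equation does not by itself give uniqueness of $H^3$ patch solutions in the sense of Definition \ref{D.1.1}; the paper itself flags exactly this issue for \cite{g} (``uniqueness of solutions was also proved for the contour equation \dots although not for the original modified SQG equation''). One must show that \emph{every} patch solution satisfying \eqref{1.3} gives rise to a solution of the contour equation (equivalently, relate \eqref{1.3} to the flow map as in Theorem \ref{T.1.1bis}); asserting that contour solutions ``correspond, by construction'' to patch solutions only gives existence, not uniqueness.
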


The hypothesis $\alpha<\frac 1{24}$ in Theorem~\ref{T.1.1} may well be an artifact of the local existence proof, but we still will need a "small $\alpha$"
assumption, even though less restrictive, in the finite time singularity proof below.
The last claim in this theorem means that  either
\[
\hbox{$\partial\Omega_k(T_\omega)\cap \partial\Omega_i(T_\omega)\neq\emptyset$}
\]
 for some $k\neq i$, or $\partial\Omega_k(T_\omega)$ is not a simple closed curve for some $k$, or
\[
\lim_{t\nearrow T_\omega} \|\Omega_k(t)\|_{H^3}=\infty
\]
for some $k$, where the above norm is the $H^3$ norm of any constant-speed  parametrization of $\partial\Omega_k(t)$ (see Definition \ref{D.1.0} below).
Note that the sets
\[
\partial\Omega_k(T_\omega):=\lim_{t\nearrow T_\omega}\partial\Omega_k(t),
\]
 with the limit taken with respect to the Hausdorff distance $d_H$,  are well defined if $T_\omega<\infty$ because
$u$ is uniformly bounded --- see Lemma~\ref{lemma:uniform_u_bound} below.
In fact,  \cite[Lemma 4.10]{KYZ1} yields
\[
\hbox{$d_H(\partial\Omega(t),\partial\Omega(s))\le \|u\|_{L^\infty}|t-s|$}
\]
for $t,s\in[0,T_\omega)$.
%
%

We can now state the main results of the present paper --- global regularity of~$C^{1,\gamma}$ patch solutions
in the 2D Euler case $\alpha=0$, and existence of $H^3$ patch solutions which develop
a  singularity in finite time  for small $\alpha>0$.

\begin{theorem}\label{thmeuler1}
Let $\alpha =0$ and $\gamma\in(0,1]$.  Then for each $C^{1,\gamma}$ patch-like initial data
$\omega_0$,  there exists a unique global $C^{1,\gamma}$ patch solution $\omega$ to \eqref{sqg}-\eqref{eq:velocity_law}
with $\omega(\cdot,0)=\omega_0$.
\end{theorem}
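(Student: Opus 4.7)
The plan is to combine classical Yudovich theory with a Chemin-type tangent-vector-field argument, adapted to the half-plane geometry and to patches that may touch $\partial D$ tangentially.

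First I would observe that $\omega_0\in L^1\cap L^\infty(D)$ and that the half-plane Biot--Savart kernel in \eqref{eq:velocity_law} with $\alpha=0$ coincides with the whole-plane Euler kernel applied to the odd reflection of $\omega$ across $\partial D$. The standard Yudovich theory therefore produces a unique weak solution $\omega(\cdot,t)\in L^\infty_t(L^1\cap L^\infty(D))$, with a velocity $u$ that is log-Lipschitz in $x$ uniformly in $t$. Consequently the flow $\Phi_t:\bar D\to \bar D$ defined by \eqref{eq:alpha} is uniquely defined for every $x\in\bar D$ and is a bi-H\"older homeomorphism of $\bar D$; since $u_2=0$ on $\partial D$, it preserves both $\partial D$ and $D$. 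Setting $\Omega_k(t):=\Phi_t(\Omega_{0k})$, the usual transport argument yields $\omega(\cdot,t)=\sum_k\theta_k\chi_{\Omega_k(t)}$, and because $\Phi_t$ is a homeomorphism each $\partial\Omega_k(t)$ remains a simple closed curve and the closures $\overline{\Omega_k(t)}$ stay pairwise disjoint. Continuity of $u$ together with \eqref{eq:alpha} then verifies the displacement property, so $\omega$ is a patch solution; uniqueness among $C^{1,\gamma}$ patch solutions follows because every such solution is also a Yudovich weak solution, to which Yudovich's $L^\infty$ uniqueness theorem applies.

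To propagate the $C^{1,\gamma}$ regularity I would use Chemin's tangent-vector-field method. For each $k$, fix a compactly supported $C^\gamma$ vector field $W_{0,k}$ on $\bar D$ that is tangent to and non-vanishing on $\partial\Omega_{0k}$, and transport it along the flow via $W_k(\Phi_t(x),t):=D\Phi_t(x)W_{0,k}(x)$, so that $W_k(\cdot,t)$ is tangent to $\partial\Omega_k(t)$ and satisfies
\begin{equation*}
(\partial_t+u\cdot\nabla)W_k \;=\; W_k\cdot\nabla u.
\end{equation*}
The desired $C^{1,\gamma}$ regularity of $\partial\Omega_k(t)$ is then equivalent to a uniform-in-time upper bound on $\|W_k(\cdot,t)\|_{C^\gamma}$ combined with a lower bound on $\inf_{\partial\Omega_k(t)}|W_k|$; both will follow from a standard logarithmic Gr\"onwall argument once one has the tangential-derivative estimate
\begin{equation*}
\|\partial_{W_k}u(\cdot,t)\|_{C^\gamma(\bar D)} \;\le\; C\|\omega_0\|_{L^1\cap L^\infty}\Bigl(1+\log\bigl(1+{\textstyle\sum_j}\|W_j(\cdot,t)\|_{C^\gamma}\bigr)\Bigr),
\end{equation*}
yielding a double-exponential bound $\|W_k(\cdot,t)\|_{C^\gamma}\le \|W_{0,k}\|_{C^\gamma}\exp(\exp(C\|\omega_0\|_\infty t))$.

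The hard part will be establishing this H\"older estimate for $\partial_{W_k}u$ in the presence of $\partial D$, and most acutely when $\partial\Omega_k(t)$ actually touches $\partial D$. I would use the odd-reflection representation to rewrite $u$ as a standard whole-plane Biot--Savart integral against the odd extension $\tilde\omega$ of $\omega$, and then integrate by parts against the constant values of $\tilde\omega$ on each signed patch, converting $\partial_{W_k}u$ into a boundary integral of Calder\'on-commutator type controlled by $\|W\|_{C^\gamma}$. At a touching point $x^*\in\partial\Omega_k(t)\cap\partial D$, the $C^{1,\gamma}$ regularity of $\partial\Omega_k(t)$ in $\bar D$ together with $\Omega_k(t)\subset D$ forces the tangent vector there to be horizontal, so $\partial\Omega_k(t)$ and its odd reflection across $\partial D$ meet tangentially at $x^*$. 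This tangency contributes an extra $|x-x^*|^\gamma$ factor that tames the otherwise dangerous near-singularity from the reflection kernel when both $x$ and $y$ lie near $\partial D$; a key consistency check is that this tangential touching is preserved by the flow, since $\Phi_t$ maps $\partial D$ onto itself and hence the tangent vector at a touching point remains horizontal for all times. Carrying out these cancellations carefully and feeding the resulting log-loss into the log-Gr\"onwall scheme completes the argument.
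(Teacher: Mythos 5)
Your overall architecture coincides with the paper's: Yudovich theory (via odd reflection) for existence, uniqueness, and the flow map, followed by a transported tangent vector field $W$ satisfying $(\partial_t+u\cdot\nabla)W=(W\cdot\nabla)u$ and a logarithmic Gr\"onwall scheme giving double-exponential control of $\|W\|_{\dot C^\gamma}$ and $\inf_{\partial\Omega}|W|$; the paper implements this with $W=\nabla^\perp\varphi$ for a transported level-set function, exactly in the Bertozzi--Constantin spirit you describe. You also correctly isolate the genuinely new difficulty: the reflected part $\tilde v$ of the velocity, generated by the mirror patch $\tilde\Omega$, to which $W$ is not tangent, so \cite[Corollary 1]{bc} cannot be applied directly.

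Where your sketch has a real gap is precisely at that step, which is where all the work in the paper lies. First, the problem is not localized at exact touching points $x^*\in\partial\Omega\cap\partial D$: the dangerous regime is every $x\in\Omega$ whose distance $d(x)$ to $\tilde\Omega$ is small, whether or not the patch actually touches $\partial D$ nearby, so an argument organized around touching points and a factor $|x-x^*|^\gamma$ does not cover the general situation. Second, the quantitative gain you assert is too strong for a $C^{1,\gamma}$ boundary. The paper writes $(\nabla\tilde v)W=(\nabla\tilde v)\tilde W+(\nabla\tilde v)(W-\tilde W)$ with $\tilde W$ the reflected field (tangent to $\partial\tilde\Omega$, so Bertozzi--Constantin applies to that piece), and the residual $|W(x)-\tilde W(x)|$ is bounded, up to a harmless $A_\gamma d(x)^\gamma$ correction, by $2|w_2(\bar P_x)|$; a sign-change argument along a ray then gives $|w_2(P)|\le 2\bigl(A_\gamma\, p_2^\gamma\,|W(P)|^\gamma\bigr)^{1/(1+\gamma)}$ --- note the exponent $\gamma/(1+\gamma)$ on the distance $p_2$ to $\partial D$, not $\gamma$. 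This weaker smallness must be balanced against a refined H\"older estimate for $\nabla\tilde v$ (Proposition~\ref{T1prop}), namely a bound by $C_\gamma(1+\log_+\frac{A_\gamma}{A_{\inf}})\min\{A_\gamma/|\tilde W(P_x)|,\,d(x)^{-\gamma}\}$, whose proof in turn needs the improved second-derivative bound $|\nabla^2\tilde v(x)|\le C_\gamma d(x)^{-1+\gamma}r_x^{-\gamma}$ of Lemma~\ref{lemma:d2vx} together with a flatness (geometric) lemma for $\partial\tilde\Omega$ near $P_x$. Only the product of the two interpolated bounds closes to $C_\gamma A_\gamma(1+\log_+\frac{A_\gamma}{A_{\inf}})$. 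None of this balancing is visible in ``carrying out these cancellations carefully,'' so the decisive estimate is asserted rather than proved, even though your geometric intuition (near-horizontality of the tangent near $\partial D$) is the correct starting point.
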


\begin{theorem}\label{main1234}
Let $\alpha\in(0,\frac 1{24})$.  Then there are $H^3$ patch-like initial data $\omega_0$ for which the unique local $H^3$
patch solution $\omega$ to \eqref{sqg}-\eqref{eq:velocity_law} with $\omega(\cdot,0)=\omega_0$
becomes singular in finite time (i.e., its maximal time of existence $T_\omega$ is finite).
\end{theorem}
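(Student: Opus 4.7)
The plan is to construct symmetric $H^3$ initial data for \eqref{sqg}-\eqref{eq:velocity_law} with $N=2$, $\theta_1=-\theta_2=1$, and $\Omega_{02}$ the mirror image of $\Omega_{01}$ across the $x_2$-axis, both patches touching $\partial D$ in segments adjacent to the origin. The invariance of \eqref{eq:velocity_law} under $(x_1,x_2,\omega)\mapsto(-x_1,x_2,-\omega)$, combined with uniqueness from Theorem~\ref{T.1.1}, propagates the odd symmetry $\omega(-x_1,x_2,t)=-\omega(x_1,x_2,t)$ on $[0,T_\omega)$. Hence $u_1\equiv 0$ on the $x_2$-axis (while $u_2\equiv 0$ on $\partial D$ is built into \eqref{eq:velocity_law}), making the origin a hyperbolic fixed point of the flow that attracts fluid along $\partial D$ from both sides.

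I would then track the left endpoint $(A(t),0)$ of the boundary segment $[A(t),B(t)]\times\{0\}\subset\overline{\Omega_1(t)}\cap\partial D$. Because $u_2\equiv 0$ on $\partial D$ and $u$ is continuous up to $\partial D$, the Hausdorff-distance patch-motion condition satisfied by patch solutions (as discussed preceding Theorem~\ref{T.1.1}) forces $(A(t),0)$ to move along $\partial D$ with velocity $u_1(A(t),0,t)$, hence the Dini inequality $D^+ A(t)\le u_1(A(t),0,t)$. By symmetry the two patches touch at the origin, producing a singularity via Theorem~\ref{T.1.1}, exactly when $A(t)$ first reaches $0$.

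At $x=(A,0)$ the distances $|x-y|=|x-\bar y|$ coincide, so \eqref{eq:velocity_law} simplifies, and folding in $\Omega_2(t)$ via odd symmetry gives
\begin{equation*}
u_1(A(t),0,t)=-2\int_{\Omega_1(t)}y_2\left[\frac{1}{\bigl((y_1-A)^2+y_2^2\bigr)^{1+\alpha}}-\frac{1}{\bigl((y_1+A)^2+y_2^2\bigr)^{1+\alpha}}\right]dy.
\end{equation*}
The integrand is positive on $\Omega_1(t)\subseteq\{y_1\ge A\}$, and provided $\Omega_1(t)$ occupies a positive fraction of the quarter-disk $\{y\in D:|y-(A,0)|<r,\,y_1>A\}$ at every scale $r\le\rho$ (uniformly in $t$), a scaling computation --- the first bracket term dominates the second by a factor $\sim (A/r)^{2+2\alpha}$ once $r\lesssim A$ --- delivers the key estimate
\begin{equation*}
u_1(A(t),0,t)\le -c\,A(t)^{1-2\alpha}
\end{equation*}
for an absolute constant $c>0$. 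Since $1-2\alpha<1$ (as $\alpha<\tfrac 12$), the differential inequality $\dot A\le -cA^{1-2\alpha}$ drives $A(t_*)=0$ at some $t_*\le A(0)^{2\alpha}/(2\alpha c)$, whence $T_\omega\le t_*<\infty$.

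The main obstacle is to secure the geometric lower bound on $\Omega_1(t)$ near $(A(t),0)$ that the velocity estimate requires, and this demands a barrier argument. I would construct a closed region $R(t)\subseteq\Omega_1(t)$ of wedge/trapezoidal type anchored at $(A(t),0)$ that (i) lies in $\Omega_1(0)$ initially, (ii) supplies the geometric content used above, and (iii) along its moving boundary $\partial R(t)\setminus\partial D$ has outward normal component of $u$ strictly dominated by the outward normal velocity prescribed for $\partial R(t)$. Using the flow-map uniqueness from \cite{KYZ1} recalled in the excerpt (trajectories starting in $\bar D\setminus\partial\Omega(0)$ remain uniquely defined and off $\partial\Omega(t)$ up to $T_\omega$ for $\alpha$ small enough), the inclusion $R(t)\subseteq\Omega_1(t)$ propagates for all $t\in[0,T_\omega)$. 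The technical heart of the work is obtaining the inward-flow estimates on $\partial R(t)$ that are \emph{robust} to the uncontrolled evolution of $\Omega_1(t)\setminus R(t)$; this robustness is precisely where the smallness of $\alpha$ enters the singularity proof beyond the well-posedness constraint of Theorem~\ref{T.1.1}, and in the 2D Euler limit $\alpha=0$ the same scheme delivers at best $\dot A\lesssim A$, which does not integrate to finite time, consistent with Theorem~\ref{thmeuler1}.
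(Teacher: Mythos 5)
Your outline follows essentially the same route as the paper: odd-in-$x_1$ mirror patches touching $\partial D$ near the origin, symmetry propagated by uniqueness, a wedge-shaped barrier region that must stay inside the patch, a velocity estimate $u_1\lesssim -x_1^{1-2\alpha}$ extracted from the patch filling a definite region up and to the right, and integration of $\dot A\le -cA^{1-2\alpha}$ to force a touch at the origin in time $O(A(0)^{2\alpha}/\alpha)$. The paper's $K(t)$ is exactly your trapezoidal $R(t)$, and its Proposition on the velocity sign is exactly your key estimate. Your computation of $u_1$ at a boundary point $(A,0)$ (where $|x-y|=|x-\bar y|$ and the integrand has a single sign) is correct.

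The genuine gap is that the step you yourself flag as ``the technical heart'' --- propagating $R(t)\subseteq\Omega_1(t)$ --- is precisely where all the real work lies, and your sketch underestimates two difficulties. First, at points of $\partial R(t)\setminus\partial D$ with $x_2>0$ the integrand no longer has one sign: the portion of the patch with $y_2<x_2$ pushes such points the wrong way, and one must show the ``good'' contribution (from a fixed-size triangle above and to the right, not merely a quarter-disk of radius $\sim A$) carries an extra factor $1/\alpha$ from $\int_{x_1}^1 r^{-1-2\alpha}\,dr$, while the ``bad'' contribution is only $O(1)\cdot x_1^{1-2\alpha}$; this sign-split is where the smallness of $\alpha$ actually enters, and your quarter-disk scaling at scales $r\lesssim A$ misses the dominant intermediate-scale contribution and the $1/\alpha$ gain. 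Second, anchoring the barrier at the actual endpoint $A(t)$ of $\overline{\Omega_1(t)}\cap\partial D$ is delicate (that set need not remain a single segment, and the endpoint need not evolve by an ODE); the paper instead uses a deterministic supersolution $X(t)$ of the ODE, defines $K(t)$ with left edge at $X(t)$, and runs a first-touching-time contradiction using the complement flow map $D^+\setminus\overline{\Omega(t)}=\Phi_t(D^+\setminus\overline{\Omega(0)})$ from the companion paper. So the proposal identifies the correct strategy but does not yet constitute a proof without supplying the good/bad kernel decomposition and the rigorous barrier comparison.
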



%

To the best of our knowledge, Theorem \ref{main1234} is the first rigorous proof of finite time singularity formation in this class of
fluid dynamics models.
 Moreover, Theorems~\ref{thmeuler1} and \ref{main1234} show that the $\alpha$-patch model
undergoes a phase transition at $\alpha=0$, which provides a reason for calling the~2D Euler equations ``critical''.


Let us now describe the type initial conditions, depicted in
Figure~\ref{fig_init}, which will lead to a singularity for $\alpha>0$.
\begin{figure}[htbp]
\begin{center}
\includegraphics[scale=1.5]{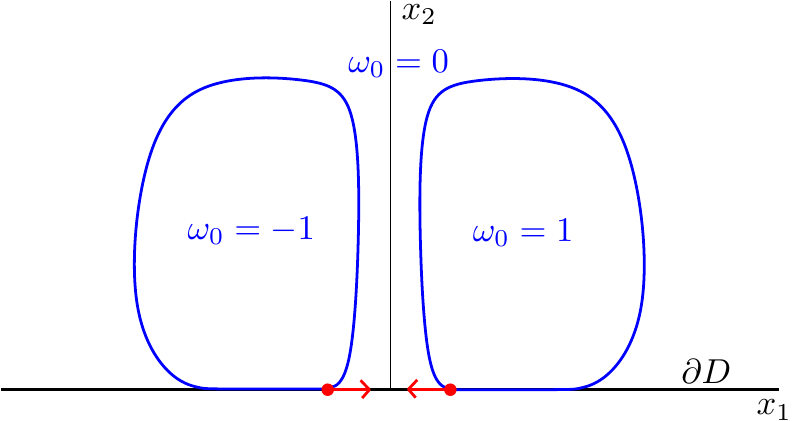}
\caption{Initial data $\omega_0$ which leads to a finite time singularity. \label{fig_init}}
\end{center}
\end{figure}
As we  have mentioned above, our choice of initial data is motivated by the numerical
simulations of the three-dimensional Euler equations in~\cite{LuoHou}, as well as~by the example of
smooth solutions to the 2D Euler equations with a double exponential temporal growth of their vorticity gradients in \cite{KS}.
The initial condition consist of two patches with opposite signs,
symmetric with respect to the $x_2$-axis and touching the $x_1$-axis.  The patches are sufficiently close to the origin and
have a sufficiently large area.  It can then be seen from \eqref{eq:velocity_law}
 that the rightmost point of the left patch on the $x_1$-axis and
the leftmost point of the right patch on the $x_1$-axis will move toward each other (see Figure~\ref{fig_init}).
In the case of the 2D Euler equations $\alpha=0$, Theorem \ref{thmeuler1} shows that the two points never reach the origin.
When $\alpha>0$ is small, however, we are able to control the evolution sufficiently well to show that --- unless the solution develops another
singularity earlier --- both points will reach the origin in a finite time. The argument yielding such control is fairly subtle, and
the estimates do not extend to all $\alpha<\frac 12$, even though one would expect the singularity formation to persist for more singular equations.

We note that we will actually run the singularity formation argument for the less regular $C^{1\gamma}$ patch solutions. However, we do not currently have
local well-posedness theorem in this class for $\alpha>0$, even though existence of such solutions follows from existence of the more regular
$H^3$ patch solutions. Since our argument requires odd symmetry, which would follow from uniqueness due to the symmetries of the equation, it effectively shows that there exist $C^{1,\gamma}$ patch solutions which
either have a finite maximal time of existence (i.e., exhibit singularity formation) or lose uniqueness (and odd symmetry).

Throughout the paper we denote by $C$, $C_\gamma$, etc.~various universal constants, which may change from line to line.

{\bf Acknowledgment.} We thank Peter Guba, Bob Hardt, and Giovanni Russo for useful discussions.
We acknowledge partial support by NSF-DMS grants 1056327, 1159133, 1311903,
1411857, 1412023, and 1535653.

\section{Vortex patches and low regularity velocity fields}\label{sec:def-flow}

In this section, we make precise the notion of the patch evolution for $\alpha>0$ and recall additional existence
results from~\cite{KYZ1} which we will need in the proof of Theorem \ref{main1234}.

\subsubsection*{The definition of the patch evolution}

As we mentioned above, H\"older regularity of
the fluid velocity $u$ at the patch boundaries is not sufficient for a unique definition of
the trajectories from \eqref{eq:alpha} when $\alpha>0$.
We start with a definition of the H\"older and Sobolev norms of the boundaries of domains in~$\Rm^2$ which will make
the notions of $C^{1,\gamma}$ and $H^3$ patches precise.

\begin{definition} \label{D.1.0}
Let  $\Omega\subseteq \mathbb{R}^2$ be a bounded open set whose boundary $\partial\Omega$
is a simple closed $C^{1}$ curve with arc-length $|\partial\Omega|$.  A constant speed parametrization of $\partial\Omega$
is any counter-clockwise parametrization $z:\mathbb{T}\to \mathbb{R}^2$ of $\partial\Omega$ with
$|z'|\equiv \tfrac 1{2\pi}{|\partial\Omega|}$ on the circle $\mathbb T:=[-\pi,\pi]$ (with $\pm\pi$ identified),
and we define $\|\Omega\|_{C^{m,\gamma}}:=\|z\|_{C^{m,\gamma}}$ and $\|\Omega\|_{H^{m}}:=\|z\|_{H^{m}}$.
\end{definition}
It is not difficult to see (using \cite[Lemma 3.4]{KYZ1}), that a domain $\Omega$ as above
satisfies~$\|\Omega\|_{C^{m,\gamma}}<\infty$ (resp.~$\|\Omega\|_{H^{m}}<\infty$) precisely when
for some $r>0$, $M<\infty$, and each~$x\in\partial\Omega$, the set $\partial\Omega\cap B(x,r)$ is
(in the coordinate system centered at $x$ and with the axes given by the tangent and normal vectors
to $\partial\Omega$ at $x$) the graph of a function with $C^{m,\gamma}$ (resp. $H^m$) norm less than $M$.

We denote by $d_H(\Gamma,\tilde\Gamma)$ the Hausdorff distance between two
sets $\Gamma,\tilde\Gamma$. For a set~$\Gamma\subseteq\Rm^2$, a vector field $v:\Gamma\to\Rm^2$, and $h\in\Rm$, we let
\[
X_{v}^h[\Gamma]:= \{ x+hv(x)\,:\, x\in\Gamma\}.
\]
Our definition of a patch solution to \eqref{sqg}-\eqref{eq:velocity_law} in the half-plane is as follows.
\begin{definition}\label{D.1.1}
Let $D:=\Rm\times\Rm^+$, let $\theta_1,\dots,\theta_N\in\Rm\setminus\{0\}$, and for each $t\in[0,T)$,
let~$\Omega_1(t),\dots,\Omega_N(t)\subseteq D$ be bounded open sets with pairwise disjoint closures whose
boundaries $\partial \Omega_k(t)$ are simple closed curves, such that each $\partial \Omega_k(t)$ is also continuous in $t\in[0,T)$ with respect to $d_H$.
Denote $\Omega(t):=\bigcup_{k=1}^N \Omega_k(t)$ and let
\begin{equation} \label{1.55}
\omega(x,t) := \sum_{k=1}^N \theta_k \chi_{\Omega_k(t)}(x).
\end{equation}

If for  each $t\in(0,T)$ and $u$ from \eqref{eq:velocity_law}, we have
\begin{equation}\label{1.3}
\lim_{h\to 0} \frac{d_H \Big(\partial\Omega(t+h),X_{u(\cdot,t)}^h[\partial\Omega(t)] \Big)}h = 0,
\end{equation}
then $\omega$ is a patch solution to \eqref{sqg}-\eqref{eq:velocity_law}  on the time interval $[0,T)$.
If we also have
\[
\sup_{t\in [0,T']} \|\Omega_k(t)\|_{C^{m,\gamma}}<\infty \qquad \left(\text{resp. } 
\sup_{t\in [0,T']} \|\Omega_k(t)\|_{H^{m}}<\infty \right)
\]
for each $k$ and $T'\in(0,T)$,
then $\omega$ is a $C^{m,\gamma}$ (resp.~$H^m$) patch solution to \eqref{sqg}-\eqref{eq:velocity_law} on~$[0,T)$.
\end{definition}
Lemma \ref{lemma:uniform_u_bound} below shows that $u$ is H\"older continuous for patch solutions, thus \eqref{1.3} says that
$\partial\Omega$ is moving with velocity $u(x,t)$ at any $t\in[0,T)$ and $x\in \partial\Omega(t)$.

This definition generalizes the well-known definitions for the 2D Euler equations in terms of \eqref{1.31}-\eqref{eq:alpha} or in terms of
the normal velocity at $\partial\Omega$.  Indeed, if $\omega$ satisfies
$\partial\Omega_{k}(t) = \Phi_t(\partial\Omega_{k}(0))$ for each $k$ and $t\in[0,T)$, the patches have pairwise disjoint closures,
and their boundaries remain simple closed curves, then continuity of $u$, compactness of $\partial\Omega(t)$, and \eqref{eq:alpha}
show that $\omega$ is a patch solution to~\eqref{sqg}-\eqref{eq:velocity_law}
on  $[0,T)$.  Moreover, if $\partial\Omega(t)$ is $C^1$  and $n_{x,t}$ is the outer unit normal vector at $x\in \partial\Omega(t)$,
then \eqref{1.3} is equivalent to the motion of $\partial\Omega(t)$ with the outer normal velocity $u(x,t)\cdot n_{x,t}$
at each $x\in\partial\Omega(t)$
(which can be defined in a natural way by \eqref{1.3} with $u(\cdot,t)$ replaced by $(u(\cdot,t)\cdot n_{\cdot,t})n_{\cdot,t}$).
However, Definition~\ref{D.1.1} makes sense even if $\Phi_t(x)$ cannot be uniquely defined for some
$x$, or when $\partial\Omega(t)$ is not~$C^1$.

It is not difficult to show~(see \cite{KYZ1}, Remark 3 after Definition 1.2) that $C^1$ patch solutions to \eqref{sqg}-\eqref{eq:velocity_law}
are also weak solutions to \eqref{sqg} in the sense that for each $f\in C^1(\bar D)$ we have
\begin{equation} \label{1.6}
\frac d{dt} \int_D \omega(x,t)f(x)dx = \int_D \omega(x,t) [u(x,t)\cdot\nabla f(x)] dx
\end{equation}
for all $t\in(0,T)$, with  both sides continuous in $t$.
Also, weak solutions to \eqref{sqg}-\eqref{eq:velocity_law} which are of the form \eqref{1.55}
and have $C^1$ boundaries $\partial\Omega_k(t)$ which move with some continuous velocity $v:\Rm^2\times(0,T)\to\Rm^2$
(in the sense of \eqref{1.3} with $v$ in place of $u$), do satisfy \eqref{1.3} with $u$ (hence they are patch solutions if those
boundaries are simple closed curves and the domains have pairwise disjoint closures).
Moreover, \eqref{1.6} also leads to $|\Omega_k(t)|=|\Omega_k(0)|$ for each $k$ and $t\in[0,T)$ --- see an elementary argument at the end
of the introduction of \cite{KYZ1}.


We also note that in the 2D Euler case $\alpha=0$, it is not difficult to show via the standard approach of Yudovich theory that there is a unique global weak
solution $\omega$ to \eqref{sqg}-\eqref{eq:velocity_law} on $D$ with a given $\omega(\cdot,0)$ as in Definition \ref{D.1.1}, and it is of
the form \eqref{1.55}
with $\partial\Omega_{k}(t) = \Phi_t(\partial\Omega_{k}(0))$. (We spell out this argument  in Section~\ref{sec:euler}.)
Thus, the above shows that as long as the patch boundaries remain pairwise disjoint simple closed curves,
$\omega$ is also the unique patch solution to  \eqref{sqg}-\eqref{eq:velocity_law}.

\subsubsection*{Relation of patch solutions to the flow map $\Phi_t$ in the modified SQG case $\alpha>0$}

The companion paper \cite{KYZ1}, which proves Theorem \ref{T.1.1}
as well as the same result on $\Rm^2$ for all $\alpha\in(0,\frac 12)$ (thus extending the results of~\cite{Rodrigo}
for infinitely smooth SQG patches of a special type on $\Rm^2$ to all $H^3$ modified SQG patches), also provides a link
between patch solutions and the flow map $\Phi_t$ from \eqref{eq:alpha} which will be important in our finite time singularity proof.
Note that since~$u$ is smooth away from $\partial\Omega$, the trajectories $\Phi_t(x)$ remain unique at least until they
hit~$\partial\Omega$  (in the Euler case, $\Phi_t(x)$ is always unique because $u$ is log-Lipschitz).
However, after hitting a patch boundary, the trajectory still exists but need not be unique.   Part (a) of the following result from \cite{KYZ1}  shows
that for $\alpha<\frac 14$ and patch solutions with $H^3$ boundaries, the flow lines which start away from $\partial\Omega(0)$
will stay away from $\partial\Omega(t)$
as long as the solution remains regular (note that we have $H^3(\mathbb T)\subseteq C^{1,1}(\mathbb T)$).
%

\begin{theorem}\label{T.1.1bis}
(\cite{KYZ1})
For $\omega$ as in the first paragraph of Definition~\ref{D.1.1} and $x\in\bar D \setminus \partial \Omega(0)$,  let $t_{\omega,x}\in [0,T]$ be the maximal time
such that the solution of \eqref{eq:alpha} with $u$ from \eqref{eq:velocity_law} satisfies $\Phi_t(x)\in \bar D\setminus \partial\Omega(t)$ for each
$t\in[0,t_{\omega,x})$.
%

(a) If $\alpha\in(0,\frac 14)$, $\gamma\in(\frac{2\alpha}{1-2\alpha},1]$, and $\omega$ is a $C^{1,\gamma}$ patch solution
to \eqref{sqg}-\eqref{eq:velocity_law} on  $[0,T)$, then $t_{\omega,x}= T$ for each  $x\in \bar D\setminus\partial\Omega(0)$ and
\[
\Phi_t:[\bar D\setminus \partial\Omega(0)]\to [\bar D\setminus \partial\Omega(t)]
\]
is a bijection for each $t\in[0,T)$.
\\

(b) If $\alpha\in(0,\frac 12)$, 
$t_{\omega,x}=  T$ for each  $x\in \bar D\setminus\partial\Omega(0)$, and
$\Phi_t:[\bar D\setminus \partial\Omega(0)]\to [\bar D\setminus \partial\Omega(t)]$ is a bijection for each $t\in[0,T)$,
then $\omega$ is a patch solution to \eqref{sqg}-\eqref{eq:velocity_law} on  $[0,T)$.  Moreover,  $\Phi_t$ is measure preserving on $\bar D\setminus \partial\Omega(0)$ and it also maps each $\Omega_k(0)$ onto $\Omega_k(t)$ as well as $\bar D\setminus \overline{\Omega(0)}$ onto $\bar D\setminus \overline{\Omega(t)}$. Finally,  we have
\[
\Phi_t(\partial\Omega_k(0)) = \partial\Omega_k(t)
\]
for each $k$ and $t\in[0,T)$, in the sense that any solution of \eqref{eq:alpha} with $x\in\partial\Omega_k(0)$
satisfies~$\Phi_t(x)\in \partial\Omega_k(t)$, and for each  $y\in\partial\Omega_k(t)$, there is $x\in\partial\Omega_k(0)$
and a solution of~\eqref{eq:alpha} such that $\Phi_t(x)=y$.
\end{theorem}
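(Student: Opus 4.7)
The plan is to establish the two-way correspondence between the Hausdorff-based notion of patch evolution in \eqref{1.3} and the flow-map picture governed by \eqref{eq:alpha}. Part (a) is the forward direction (a $C^{1,\gamma}$ patch solution yields a bijective flow off the boundary up to time $T$), and part (b) is the reverse (a bijective flow that preserves the $\Omega_k(t)$'s off the boundary upgrades $\omega$ to a patch solution in the sense of \eqref{1.3}). Throughout, Lemma~\ref{lemma:uniform_u_bound} provides the continuity of $u$ up to $\partial\Omega(t)$, which is needed to make sense of the ODE at boundary points.

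For part (a), the central task is to show that for $x\in\bar D\setminus\partial\Omega(0)$, the trajectory $\Phi_t(x)$ cannot meet $\partial\Omega(t)$ for $t<T$. First I would set $d(t):=\mathrm{dist}(\Phi_t(x),\partial\Omega(t))$ and derive a differential inequality for $d$ using (i) the fact that under \eqref{1.3} the nearest boundary point moves with velocity $u$, and (ii) a quantitative modulus-of-continuity estimate for $u$ at the patch boundary obtained from the Biot-Savart formula \eqref{eq:velocity_law} together with the $C^{1,\gamma}$ regularity of $\partial\Omega(t)$. The threshold $\gamma>\tfrac{2\alpha}{1-2\alpha}$ is precisely what is needed so that the boundary singularity of the kernel $|x-y|^{-2-2\alpha}$ is tamed by the tangential cancellation afforded by a $C^{1,\gamma}$ curve, giving $u$ a Dini-type modulus $\phi$ with $\int_0^1 ds/\phi(s)=\infty$. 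The resulting inequality $d'(t)\ge -C\phi(d(t))$ then excludes $d(t)\to 0$ in finite time, so $t_{\omega,x}=T$. Injectivity of $\Phi_t$ on $\bar D\setminus\partial\Omega(0)$ follows from the standard uniqueness theory for the ODE, since $u$ is smooth on $\bar D\setminus\partial\Omega(t)$; surjectivity is obtained by repeating the distance argument in reverse time.

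For part (b), I would exploit the assumed bijection $\Phi_t\colon\bar D\setminus\partial\Omega(0)\to\bar D\setminus\partial\Omega(t)$, which is continuous in $t$ by smoothness of $u$ off the boundary and hence sends each connected component of its domain homeomorphically onto a component of its range. For small $t$, boundedness of $u$ combined with the Hausdorff-continuity of the $\partial\Omega_k(t)$'s (built into Definition~\ref{D.1.1}) forces $\Phi_t(\Omega_k(0))=\Omega_k(t)$ and $\Phi_t(\bar D\setminus\overline{\Omega(0)})=\bar D\setminus\overline{\Omega(t)}$; these identifications propagate to all $t\in[0,T)$ by a connectedness argument. Measure preservation follows from $\nabla\cdot u=0$ on $\bar D\setminus\partial\Omega(t)$. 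The boundary statement $\Phi_t(\partial\Omega_k(0))=\partial\Omega_k(t)$ is obtained by approximating $x\in\partial\Omega_k(0)$ by interior points $x_n$: by continuity of the ODE flow and of $d_H$, any subsequential limit of $\Phi_t(x_n)$ lies on $\partial\Omega_k(t)$; the reverse inclusion is handled symmetrically using the backward flow. Finally \eqref{1.3} is a consequence of the ODE at boundary points: for $y\in\partial\Omega(t)$ pulled back to some $x\in\partial\Omega(0)$, the point $\Phi_{t+h}(x)=y+hu(y,t)+o(h)$ lies on $\partial\Omega(t+h)$ by the previous step, and a compactness argument using uniform continuity of $u$ in $t$ (again from Lemma~\ref{lemma:uniform_u_bound}) converts this pointwise statement into the uniform estimate \eqref{1.3}.

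The main obstacle is the regularity estimate on $u$ at the boundary used in part (a), which must be precisely tight enough to convert a Gronwall inequality into non-collision. The condition $\gamma>\tfrac{2\alpha}{1-2\alpha}$ is almost certainly sharp for this purpose, and the delicate part is exploiting the $C^{1,\gamma}$ structure of $\partial\Omega(t)$ to gain the right amount of tangential cancellation in the singular integral \eqref{eq:velocity_law}. Everything else — injectivity, surjectivity, measure preservation, the topological identification of components — is essentially standard ODE and topology once the no-collision property is in hand.
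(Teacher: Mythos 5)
First, a point of order: Theorem~\ref{T.1.1bis} is not proved in this paper at all --- it is quoted from the companion paper \cite{KYZ1} (note the citation in the theorem header), so there is no internal proof here to compare your proposal against. Judged on its own, your architecture is the expected one and is consistent with how \cite{KYZ1} proceeds: for (a), a non-collision estimate for $d(t):=\mathrm{dist}(\Phi_t(x),\partial\Omega(t))$ obtained by combining \eqref{1.3} with a refined modulus of continuity of $u$ near the boundary, followed by Osgood's lemma and standard forward/backward ODE uniqueness off $\partial\Omega(t)$; for (b), topological identification of the images of the components, Liouville's theorem for measure preservation, an approximation argument for the boundary statement, and a verification of \eqref{1.3} from the ODE.

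The substantive gap is that essentially all of the content of part (a) sits inside the one estimate you assert but do not prove: that for a $C^{1,\gamma}$ patch the relative velocity $|u(x,t)-u(P_{x},t)|$ (with $P_x$ a nearest point of $\partial\Omega(t)$) admits an Osgood modulus exactly under the condition $\gamma>\tfrac{2\alpha}{1-2\alpha}$, equivalently $2\alpha<\tfrac{\gamma}{1+\gamma}$. The only boundary regularity available from this paper, Lemma~\ref{lemma:uniform_u_bound}, gives $u\in C^{1-2\alpha}$, and $\phi(s)=s^{1-2\alpha}$ is \emph{not} Osgood for $\alpha>0$ (one has $\int_0^1 s^{2\alpha-1}\,ds<\infty$), so the inequality $d'\ge -Cd^{1-2\alpha}$ permits collision in finite time. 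One must genuinely exploit the tangential cancellation along the $C^{1,\gamma}$ curve by splitting the Biot--Savart integral \eqref{eq:velocity_law} into a near-field region where $\partial\Omega(t)$ is a $C^{1,\gamma}$ graph and a far field; this computation, where the exponent $\tfrac{\gamma}{1+\gamma}$ emerges, is the heart of the theorem and is entirely absent from your write-up. Two smaller soft spots in part (b): your verification of \eqref{1.3} must produce \emph{both} inclusions needed for the Hausdorff distance and must accommodate possible non-uniqueness of trajectories through boundary points (the theorem's final claim is deliberately phrased in a "for some solution of \eqref{eq:alpha}" sense for this reason); and measure preservation on $\bar D\setminus\partial\Omega(0)$ requires some care since the hypotheses of Definition~\ref{D.1.1} alone do not rule out Jordan curves $\partial\Omega_k(t)$ of positive Lebesgue measure. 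None of these are wrong turns, but as written the proposal is a correct roadmap rather than a proof.
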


\section{Global well-posedness for the Euler case $\alpha=0$}\label{sec:euler}

In this section we prove Theorem \ref{thmeuler1}.

\subsection{Proof of Theorem \ref{thmeuler1} in the single patch case}
\label{sec:pf1}

For the sake of simplicity of
presentation, we first consider a single patch
$\Omega(t)\subseteq D$, with
\[
\omega(x,t)=\theta\chi_{\Omega(t)}(x).
\]
Later, we will show how to generalize this to finitely many patches.
We may assume without loss of generality that both $\theta=1$ and  $|\Omega(t)|=|\Omega(0)|=1$,
as the general single patch case then follows by a simple scaling.
The local-in-time existence and uniqueness of  $C^{1,\gamma}$ patch solutions for this initial value problem
was proved in \cite{d}. We will therefore focus on estimates which will allow the solution to be continued indefinitely.

Our approach is
a combination of the techniques form \cite{bc} and a refinement of the
estimates in \cite{d}.  Following \cite{bc}, we reformulate the vortex-patch evolution in
terms of the evolution of a function $\varphi(x,t)$, which defines the
patch via
\[
\Omega(t) = \{\varphi(x,t)>0\}.
\]
First, if $\partial\Omega(0)$ is a simple closed $C^{1,\gamma}$ curve,
there exists a function~$\varphi_0 \in C^{1,\gamma}(\overline{\Omega(0)})$ such that~$\varphi_0 >0$ on $\Omega(0)$, $\varphi_0 =0$ on $\partial\Omega(0)$, and
\begin{equation}\label{aug1902}
\inf_{\partial\Omega(0)}|\nabla \varphi_0| >0.
\end{equation}
One can obtain such $\varphi_0$, for instance, by solving the Dirichlet problem
\begin{align*}
-\Delta \varphi_0 &= f \hbox{ on $\Omega(0)$},\\
\varphi_0 &=0 \hbox{ on $\partial \Omega(0)$,}
\end{align*}
with $0\le f\in C^\infty_0(\Omega(0))$. It  follows from
the standard elliptic estimates  (see, e.g., \cite[Theorem 8.34]{GT}) that  $\varphi_0\in C^{1,\gamma}(\overline{\Omega(0)})$,
while (\ref{aug1902}) is a consequence of Hopf's lemma, which holds for $C^{1,\gamma}$ domains
by a result of Finn and Gilbarg \cite{FG} (see also \cite[Section 10]{HS}).

Consider now the flow map \eqref{eq:alpha}
corresponding to the Biot-Savart law for the Euler equation on the half plane,
\[
u(x,t) = \int_{\Omega(t)} \frac{(x-y)^{\perp}}{|x-y|^2} dy
- \int_{\tilde\Omega(t)}  \frac{(x-y)^{\perp}}{|x-y|^2} dy =: \V  (x,t) - \tv  (x,t),
\]
with $\tilde\Omega(t)$ the reflection of $\Omega(t)$ across the $x_1$-axis.
For~$x\in\Omega(t)$, we set
\[
\varphi(x,t)=\varphi_0(\Phi_t^{-1}(x)),
\]
 with $\Phi_t^{-1}$ the inverse map, so that $\varphi$ solves
\begin{equation}\label{2.3}
\partial_t\varphi+u\cdot\nabla\varphi=0
\end{equation}
on $\{(t,x)\,:\, t> 0 \text{ and } x\in\Omega(t)\}$.
Thus, for each $t\ge 0$, $\varphi(\cdot,t)>0$  on $\Omega(t)$, it vanishes on $\partial\Omega(t)$, and it is not defined on $\mathbb R^2\setminus \overline{\Omega(t)}$.
We now let
\begin{equation}\label{def:W}
\W = (w_1,w_2):= \nabla^\perp \varphi = (\partial_{x_2} \varphi, -\partial_{x_1} \varphi),
\end{equation}
and define
\begin{eqnarray*}
&&A_\gamma(t) := \|\W(\cdot,t)\|_{\dot C^\gamma(\Omega(t))} := \sup_{x,y\in\Omega(t)}\frac{|\W(x,t)-\W(y,t)|}{|x-y|^\gamma},
\\
&&
 A_\infty(t) := \|\W(\cdot,t)\|_{L^\infty(\Omega(t))},
\\
&&
A_{\inf}(t) := \inf_{x\in\partial \Omega(t)} |\W(x,t)|.
\end{eqnarray*}
By our choice of $\varphi_0$, we have
\[
A_\gamma(0),A_\infty(0), A_{\inf}(0)^{-1}<\infty.
\]
Moreover, $\W$ is divergence free and satisfies 
\begin{equation}
\W_t + u \cdot \nabla \W = (\nabla u) \W.
\label{eq_W}
\end{equation}
Proposition 1 in \cite{bc} and $|\Omega(t)|=1$ yield
\begin{equation} \label{2.1}
\|\nabla \V(\cdot,t)\|_{L^\infty(\mathbb{R}^2)} + \|\nabla \tv(\cdot,t)\|_{L^\infty(\mathbb{R}^2)} \leq C_\gamma  \left(1+\log_{+}\frac{A_\gamma(t)}{A_{\inf}(t)}\right),
\end{equation}
with $\log_+ x := \max\{\log x,0\}$ and some universal $C_\gamma<\infty$.
%
Hence, we obtain from \eqref{eq_W} (after doubling $C_\gamma$):
\begin{align}
A_{\infty}'(t) &\leq C_\gamma A_{\infty}(t)
\left(1+\log_+\frac{A_\gamma(t)}{A_{\inf}(t)}\right), \label{eq2} \\
A_{\inf}'(t) &\geq -  C_\gamma A_{\inf}(t)
\left(1+\log_+\frac{A_\gamma(t)}{A_{\inf}(t)}\right).  \label{eq1}
\end{align}

The main step in the proof will be to get an appropriate bound on $A_\gamma(t)$.  A simple calculation and \eqref{eq_W} yield
\begin{equation}\label{a_gamma_ineq}
A_\gamma'(t) \leq \gamma\|\nabla u(\cdot,t)\|_{L^\infty(\mathbb R^2)} A_\gamma(t) + \|\nabla u(\cdot,t) \W(\cdot,t)\|_{\dot C^\gamma(\Omega(t))}.
\end{equation}
Our goal will now be to show
 \begin{equation}
\|\nabla u(\cdot,t) \W(\cdot,t)\|_{\dot C^\gamma(\Omega(t))}
\leq C_\gamma A_\gamma(t) \left(1+\log_+ \frac{A_\gamma(t)}{A_{\inf}(t)}\right)
\label{eq:goal}
\end{equation}
with some universal $C_\gamma<\infty$.
This and \eqref{2.1} turn \eqref{a_gamma_ineq} into
\begin{equation}\label{123}
A_\gamma'(t) \leq C_\gamma  A_\gamma(t) \left(1+\log_+ \frac{A_\gamma(t)}{A_{\inf}(t)}\right).
\end{equation}
It follows from \eqref{123} and \eqref{eq1}
that the ratio
\[
A(t):=\frac{A_\gamma(t) }{A_{\inf}(t)}
\]
satisfies
\[
A'(t)\le C_\gamma A(t)(1+\log_+ A(t)).
\]
Therefore, $A(t)$ grows at most double-exponentially in time, and the same estimate
for~$A_\infty(t)$, $A_{\inf}(t)^{-1}$, and $A_\gamma(t)$ follows from \eqref{eq2}, (\ref{eq1}), and (\ref{123}), respectively.
 This then proves Theorem~\ref{thmeuler1}
for a single patch because  the above bounds on $A_\gamma(t)$, $A_{\inf}(t)$ and $A_\infty(t)$ imply that
the patch boundary cannot touch itself and  must be $C^{1,\gamma}$ at time $t$ (hence the local-in-time solution can be extended indefinitely).

Thus, the proof for a single patch is reduced to \eqref{eq:goal}.  The time variable will not play a role here,
so we will drop the argument $t$ in what follows. We split $(\nabla u) \W$ as
\[
(\nabla u)\W=(\nabla \V)  \W + (\nabla \tv)   \W.
\]
Since $\V $ is generated by the patch $\Omega$, and $\W$ is tangential to $\partial \Omega$, \cite[Corollary 1]{bc} gives
\begin{equation}\label{2.2}
\|(\nabla \V)  \W\|_{\dot C^\gamma(\Omega)} \le C_\gamma \|\nabla \V \|_{L^\infty(\mathbb R^2)}  \|\W\|_{\dot C^\gamma(\Omega)}
\end{equation}
with a universal $C_\gamma$.
Note that in \cite{bc}, $w$ is defined in $\Rm^2$ and all the norms are over $\mathbb R^2$.
We can use Whitney-type extension theorems \cite[Sec 6.2, Theorem 4]{stein} to extend
our $\varphi$ to all of $\mathbb R^2$ so that its $C^{1,\gamma}$ norm increases at most by a universal factor $\tilde C_\gamma<\infty$.
 This and \cite{bc} now yield \eqref{2.2}.
 Notice that this extended $\varphi$ does not necessarily solve \eqref{2.3}.

By \eqref{2.2} and \eqref{2.1},  $\|(\nabla \V ) \W\|_{\dot C^\gamma(\Omega)}$ is indeed bounded by the right-hand side of \eqref{eq:goal}.
Thus, it suffices to show that $\|(\nabla \tv)   \W\|_{\dot C^\gamma(\Omega)}$ satisfies the same estimate. As $\W$ is not tangential
to the boundary of $\tilde \Omega$, which generates $\tv  $,
we cannot directly apply the methods from \cite{bc}.
Let us take the above extension of $\varphi$ to $\mathbb R^2$ and define
\[
\tilde \varphi(x) := \varphi(\bar x) \qquad \text{and}\qquad
\tilde w:= -\nabla^\perp \tilde \varphi,
\]
with $\bar x=(x_1,-x_2)$.
Then $\tilde{\W}$ is tangential to $\partial \tilde\Omega$ and
\[
\| \tilde \W\|_{\dot C^\gamma(\mathbb R^2)}\le \tilde C_\gamma A_\gamma,
\]
Thus, Corollary 1 of~\cite{bc}  again yields
\[
\|(\nabla \tv )  \tilde{\W}\|_{\dot C^\gamma(\Omega)} \leq
C_\gamma \|\nabla \tv  \|_{L^\infty(\mathbb{R}^2)} \| \tilde \W\|_{\dot C^\gamma(\mathbb R^2)} \leq C_\gamma A_\gamma \left(1+\log_+\frac{A_\gamma}{A_{\inf}}\right).
\]
Hence, it suffices to prove  the following bound.
\begin{proposition}\label{proptech2de}
If $\varphi:\Omega\to[0,\infty)$ is positive on $\Omega\subseteq D$ and vanishes on $\partial\Omega$, then,
with~$\tv ,\W, \tilde \W, A_\gamma, A_{\inf}$  as above (and some universal $C_\gamma<\infty$) we have
\begin{equation}
\|\nabla \tv   (\W - \tilde{\W})\|_{\dot C^\gamma(\Omega)} \leq C_\gamma A_\gamma \left(1+\log_+\frac{A_\gamma}{A_{\inf}}\right).
\label{vww}
\end{equation}
\end{proposition}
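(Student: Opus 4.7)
The plan leverages the symmetry structure of $h := \W - \tilde\W$. Since $\tilde\W(x) = (\partial_2\varphi(\bar x), \partial_1\varphi(\bar x))$, one has $h = \nabla^\perp\psi$ where $\psi(x) := \varphi(x) + \varphi(\bar x)$ is \emph{even} under the reflection $x \mapsto \bar x$. Consequently $h_1 = \partial_2\psi$ is odd (and vanishes on $\partial D$), while $h_2 = -\partial_1\psi$ is even. Combining this oddness with the Whitney extension of $\varphi$ to $\Rm^2$ (which gives $\|\W\|_{\dot C^\gamma(\Rm^2)} \leq \tilde C_\gamma A_\gamma$) yields the pointwise bound
\[
|h_1(x)| \leq \tilde C_\gamma A_\gamma (2 x_2)^\gamma \qquad \text{for } x = (x_1, x_2) \in \Omega.
\]

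I would then split the Hölder quotient via the product rule,
\[
|(\nabla\tv \cdot h)(x) - (\nabla\tv \cdot h)(y)| \leq \|\nabla\tv\|_{L^\infty(\Rm^2)} \, |h(x) - h(y)| + |h(y)|\,|\nabla\tv(x) - \nabla\tv(y)|.
\]
The first summand is bounded by $C_\gamma A_\gamma\bigl(1 + \log_+ \tfrac{A_\gamma}{A_{\inf}}\bigr) |x-y|^\gamma$ via \eqref{2.1} and the extension bound on $h$. For the second summand, $\nabla\tv$ is only $L^\infty$ (not $\dot C^\gamma$) in general, so one must split $h = h_1 e_1 + h_2 e_2$ and handle the two pieces separately. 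The $h_1 e_1$ piece is controlled by a dyadic argument based on the distance to $\partial D$: using the representation $\nabla\tv(x) = \int_\Omega \nabla_x K(x - \bar z)\,dz$, whose kernel is smooth for $x \in D$ at scale $\sim x_2$ (since $\bar z$ lies in $-D$), one derives an estimate of the form $|\nabla\tv(x) - \nabla\tv(y)| \leq C_\gamma\bigl(1 + \log_+ \tfrac{A_\gamma}{A_{\inf}}\bigr)(|x-y|/y_2)^\gamma$ in the regime $y_2 \geq |x-y|$; combined with $|h_1(y)| \leq \tilde C_\gamma A_\gamma y_2^\gamma$ this closes the estimate. The complementary regime $y_2 \leq |x-y|$ follows from Hölder interpolation between $\|\nabla\tv\|_{L^\infty}$ and $\|h_1\|_{\dot C^\gamma}$.

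The principal obstacle is the $h_2 e_2$ contribution, since $h_2$ does not decay near $\partial D$. To handle it, I would exploit the reflection identity $\tv(x) = (-\V_1(\bar x), \V_2(\bar x))$, which follows from the change of variables $y \mapsto \bar y$ in the integral defining $\tv$. Differentiating gives
\[
(\nabla\tv)(x)\, h_2(x)\, e_2 = h_2(x)\, \overline{(\partial_2\V)(\bar x)},
\]
transferring the estimate from $\tv$ (generated by $\tilde\Omega$, to whose boundary $h_2 e_2$ is \emph{not} tangential) back to $\V$ (generated by $\Omega$). Using the evenness $h_2(x) = h_2(\bar x)$, one can then identify this quantity with one paired against a vector field tangential to $\partial\Omega$ via the $\nabla^\perp\psi$ structure of $h$, and apply the Bertozzi--Constantin estimate \eqref{2.2} to close the bound. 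This symmetry-based reduction, together with tracking the resulting commutator terms, is where the delicate cancellations inherent in the even/odd decomposition of $\psi$ must be exploited in full and is the most technically demanding step of the proof.
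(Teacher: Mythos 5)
Your identity $\W-\tilde\W=\nabla^\perp\psi$ with $\psi(x)=\varphi(x)+\varphi(\bar x)$ is correct, and the $h_1e_1$ piece of your plan does close: the oddness of $h_1$ gives $|h_1(x)|\le 2\tilde C_\gamma A_\gamma x_2^\gamma$, and pairing this with $|\nabla\tv(x)-\nabla\tv(y)|\lesssim\min\{1+\log_+(A_\gamma/A_{\inf}),\,|x-y|/x_2\}$ (which follows from $|\nabla^2\tv(Z)|\le Cd(Z)^{-1}$ and $d(Z)\sim Z_2$) yields the right bound, provided you evaluate $h_1$ at the point \emph{nearer} the axis --- your stated intermediate estimate $|\nabla\tv(x)-\nabla\tv(y)|\le C(1+\log_+\tfrac{A_\gamma}{A_{\inf}})(|x-y|/y_2)^\gamma$ is false when $x_2\ll y_2=|x-y|$ (the bent-path argument only gives a factor $1+\log(|x-y|/x_2)$), but that is a repairable bookkeeping issue. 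This component of your argument is in fact somewhat more elementary than the corresponding part of the paper.

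The genuine gap is the $h_2e_2$ term, and your proposed reduction does not work. The reflection identity $\partial_2\tv(x)=\overline{(\partial_2\V)(\bar x)}$ is correct but only transfers the problem to estimating $(\nabla\V)(h_2e_2)$ on $\tilde\Omega$; Corollary 1 of \cite{bc} (i.e.\ \eqref{2.2}) requires the contracted vector field to be tangential to the boundary of the patch generating the velocity, and $h_2e_2$ is tangential neither to $\partial\Omega$ nor to $\partial\tilde\Omega$. Nor is the full field $h=\nabla^\perp\psi$: $\psi=\varphi+\varphi\circ(\,\bar\cdot\,)$ does not vanish on $\partial\Omega$, so $h$ is tangent to level sets of $\psi$, not to $\partial\Omega$. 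Writing $h_2e_2=\W-\tilde\W-h_1e_1$ to force tangentiality reintroduces $(\nabla\V)\tilde\W$, which is the mirror image of the original quantity $(\nabla\tv)\W$ --- the reduction is circular. What is missing is precisely the paper's two quantitative ingredients: (i) the lemma $|w_2(P)|\le 2\big(A_\gamma p_2^\gamma|\W(P)|^\gamma\big)^{1/(1+\gamma)}$ for $P\in\partial\Omega$, proved by shooting a ray from $P$ to the $x_1$-axis and using that $\varphi>0$ on $\Omega$ and $\varphi=0$ on $\partial\Omega$, which bounds $|h(x)|$ by $2\tilde C_\gamma A_\gamma d(x)^\gamma+4\big(A_\gamma d(x)^\gamma|\tilde\W(P_x)|^\gamma\big)^{1/(1+\gamma)}$ --- note the exponent $\gamma/(1+\gamma)$, strictly weaker than the $d(x)^\gamma$ decay you obtain for $h_1$; and (ii) the compensating refined H\"older estimate on $\nabla\tv$ (Proposition~\ref{T1prop}), which uses the flatness of $\partial\tilde\Omega$ in the ball $B(P_x,r_x)$ with $r_x\sim(|\tilde\W(P_x)|/A_\gamma)^{1/\gamma}$ to improve $|\nabla^2\tv(x)|\le Cd(x)^{-1}$ to $C_\gamma d(x)^{-1+\gamma}r_x^{-\gamma}$. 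The product of these two bounds is exactly what closes the estimate, and no symmetry manipulation of the kernel can substitute for them, because the obstruction --- the non-tangential component $w_2$ of $\W$ near $\partial D$ --- is a genuine quantity that must be shown to be small, not cancelled.
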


Let us introduce some notation: for any $x\in\Rm^2\setminus\tilde\Omega$, define
\[
d(x) := \text{dist}(x,\tilde\Omega),
\]
 let $P_x\in\partial \tilde\Omega$
be such that ${\rm dist}(x,P_x)=d(x)$
(if there are multiple such points, we pick any of them), and let~$\bar{P}_x$ be the reflection of $P_x$ across the $x_1$-axis.
For an illustration of $\W, \tilde \W, d(x), P_x, \bar{P}_x$, see Figure \ref{w_w2}.

 \begin{figure}[htbp]
\begin{center}
\includegraphics[scale=0.7]{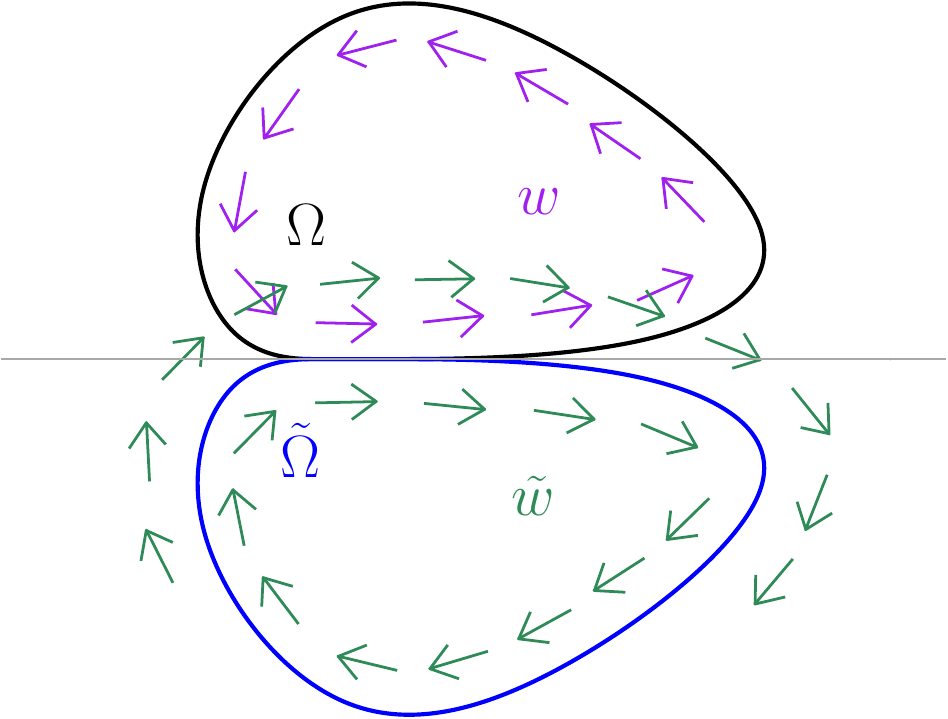}\quad\quad \includegraphics[scale=0.7]{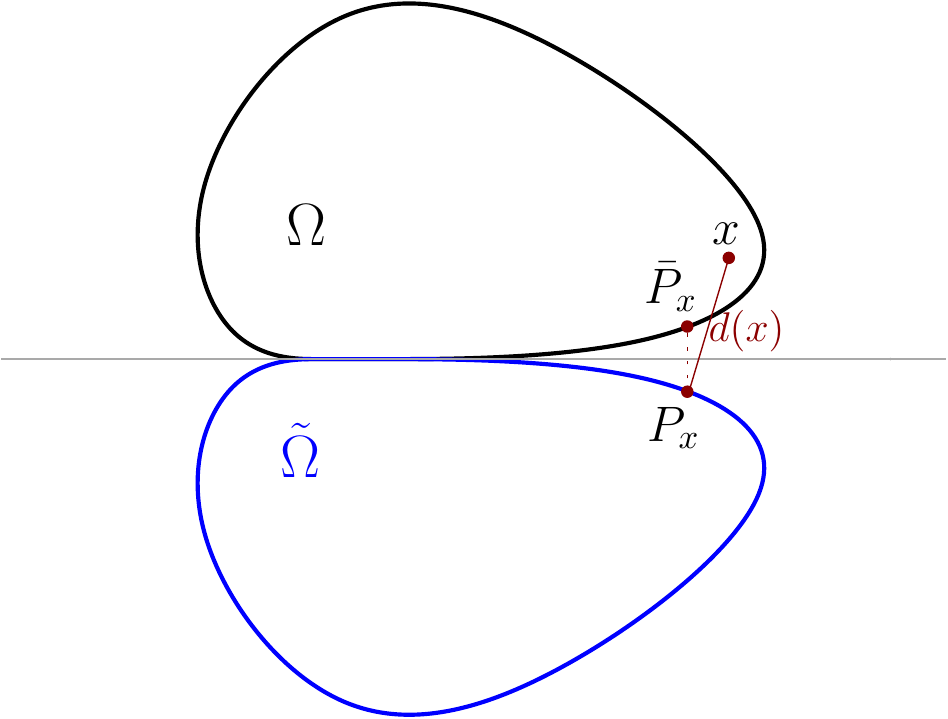}
\caption{Vector fields $\W$ and $\tilde{\W}$, and $d(x)$, $P_x$, $\bar{P}_x$ for $x\in \Omega$. \label{w_w2}}
\end{center}
\end{figure}

For arbitrary $x,y \in \Omega$ we can assume, without loss of generality, that $d(x)\leq d(y)$.  Then, with $g:= (\nabla \tv )  (\W - \tilde{\W})$,
we have
\[
\begin{split}
\frac{|g(x)-g(y)|}{|x-y|^\gamma} \leq & |\nabla \tv  (y)| \|\W - \tilde{\W}\|_{\dot C^\gamma(\Omega)}+ \underbrace{\frac{|\nabla \tv  (x) - \nabla \tv  (y)|}{|x-y|^\gamma}}_{T_1(x,y)} \underbrace{\big|\W(x) - \tilde{\W}(x)\big|}_{T_2(x)}.
\end{split}
\]
Since the first term on the right-hand side is bounded by the right-hand side of \eqref{vww}
due to \eqref{2.1} and the definition of $\tilde \W$, we only need to obtain the same bound for the second term.
We will estimate $T_1$ and $T_2$ separately, in terms of $A_\gamma$, $A_{\inf}$, $d(x)$, and $|\tilde{\W}(P_x)|=|{\W}(\bar P_x)|$.

Let us start with $T_2$.  We estimate
\[
T_2(x) \leq \big|\W(\bar P_x) - \tilde{\W}(P_x)\big| +\big|\W(\bar P_x) - \W(x)\big| + \big|\tilde{\W}(P_x) - \tilde{\W}(x)\big| \le 2\tilde C_\gamma A_\gamma d(x)^\gamma + 2|w_2(\bar P_x)|,
\]
where we used the inequality
\[
{\rm dist}(x, \bar P_x) \leq {\rm dist}(x,P_x)= d(x)
\]
 to bound the last two terms in the middle expression by $\tilde C_\gamma A_\gamma d(x)^\gamma$,
 while the first term equals $2|w_2(\bar P_x)|$ because
 \[
 \tilde \W(P_x)=(\W_1(\bar P_x),-\W_2(\bar P_x)).
 \]
 The following lemma will allow us to control $|w_2(\bar P_x)|$.

\begin{lemma}
For any $P = (p_1, p_2) \in \partial \Omega$, we have $|w_2(P)| \leq 2 \left(A_\gamma p_2^\gamma |\W(P)|^\gamma\right)^{\tfrac{1}{1+\gamma}}$.
\end{lemma}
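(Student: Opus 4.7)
The plan is to use the fact that $\W = \nabla^\perp\varphi$ is tangent to $\partial\Omega$ (since $\varphi$ vanishes there, $\nabla\varphi$ is normal), combined with the constraint that the $x_2$-coordinate of points on $\partial\Omega$ stays non-negative (as $\Omega\subseteq D$). An orientation check using $\varphi>0$ in $\Omega$ together with the fact that $\nabla^\perp$ is a clockwise rotation by $\pi/2$ yields $\W(Q) = |\W(Q)|\tau(Q)$ for each $Q \in \partial\Omega$, where $\tau$ is the counter-clockwise unit tangent to $\partial\Omega$. Parametrize $\partial\Omega$ by arc length with $Q_0 = P$ and write $Q_s = (\xi(s),\eta(s))$; then $\eta(0) = p_2$, $\eta(s)\geq 0$ for all $s$, and $\eta'(s) = \tau_2(Q_s) = w_2(Q_s)/|\W(Q_s)|$. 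The heart of the argument is that if $\beta := w_2(P)/|\W(P)|$ were too large, then moving backward along $\partial\Omega$ would force $\eta$ to decrease too fast to remain non-negative; the H\"older regularity of $\W$ quantifies the obstruction.

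Assume WLOG $w_2(P) > 0$ (the case $w_2(P)<0$ is symmetric, and $w_2(P)=0$ is trivial), and also $A_\gamma p_2^\gamma < |\W(P)|$, since otherwise $|w_2(P)| \leq |\W(P)| \leq (A_\gamma p_2^\gamma |\W(P)|^\gamma)^{1/(1+\gamma)}$ and we are done. Set $\ell := (\beta|\W(P)|/A_\gamma)^{1/\gamma}$, so that $A_\gamma\ell^\gamma = \beta|\W(P)|$. For $|s|\leq \ell$, the bound $|\W(Q_s) - \W(P)|\leq A_\gamma|s|^\gamma$ (from the definition of $A_\gamma$ and $|Q_s - Q_0| \leq |s|$) gives $w_2(Q_s) \geq \beta|\W(P)| - A_\gamma|s|^\gamma \geq 0$ and $|\W(Q_s)| \leq (1+\beta)|\W(P)|$, so $\eta'(s) \geq (\beta|\W(P)| - A_\gamma|s|^\gamma)/((1+\beta)|\W(P)|)$. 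Integrating on $[-\ell,0]$ and using $\int_{-\ell}^0\eta'(s)\,ds = p_2 - \eta(-\ell) \leq p_2$, one obtains after cancellation (via $A_\gamma\ell^\gamma = \beta|\W(P)|$) the inequality $\beta\ell\gamma/((1+\beta)(1+\gamma)) \leq p_2$. Substituting $\ell$ and raising to the $\gamma$-th power produces $\beta^{1+\gamma}|\W(P)| \leq A_\gamma p_2^\gamma(1+\beta)^\gamma(1+\gamma)^\gamma/\gamma^\gamma$, which a routine check shows is bounded by $2^{1+\gamma}\,A_\gamma p_2^\gamma$ for all $\gamma\in(0,1]$ and $\beta \in (0,1]$---equivalent to the claim.

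The main obstacle is making the integration sharp enough to recover the explicit constant $2$; the $\tau_2$ lower bound above, being chosen slightly lossily for cleanliness, already suffices (equality essentially occurs at $\beta = 1,\gamma = 1$). A subtle but harmless point is that $\ell$ may exceed the perimeter of $\partial\Omega$, but this causes no problem: the identity $\int_{-\ell}^0\eta'(s)\,ds = p_2 - \eta(-\ell)$ is always valid by the fundamental theorem of calculus, and $\eta(-\ell)\geq 0$ ensures the integral is bounded by $p_2$ regardless of wrap-around along the closed curve.
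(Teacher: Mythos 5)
Your proof is correct, and the computations check out: the lower bound $\eta'(s)\ge\bigl(\beta|\W(P)|-A_\gamma|s|^\gamma\bigr)/\bigl((1+\beta)|\W(P)|\bigr)$ is legitimate on $|s|<\ell$ (there $w_2(Q_s)>0$, so $\W(Q_s)\neq 0$ and the identification $\eta'=w_2/|\W|$ of the counter-clockwise tangent is valid), the integration gives $\beta\ell\gamma\le(1+\beta)(1+\gamma)p_2$, and the constant works because $\beta\le 1$ and $((1+\gamma)/\gamma)^\gamma\le 2$ on $(0,1]$ with equality at $\gamma=1$. However, your route is genuinely different from the paper's. The paper never touches the boundary curve as a curve: it lets $\theta$ be the angle between $\nabla\varphi(P)$ and the vertical, shoots a straight ray from $P$ into $\Omega$ in a downward direction $\nu$ making angle $\tfrac\pi2-\tfrac\theta2$ with $\nabla\varphi(P)$, notes that the ray meets the $x_1$-axis within distance $p_2/\sin(\theta/2)$, and argues that since $\varphi$ vanishes at $P$, has positive derivative $|\nabla\varphi(P)|\sin(\theta/2)$ in the direction $\nu$, and must vanish again before the ray exits $\Omega$, the directional derivative changes sign on the segment; H\"older continuity then gives $|\nabla\varphi(P)|\sin(\theta/2)\le A_\gamma\bigl(p_2/\sin(\theta/2)\bigr)^\gamma$, and the lemma follows from $|w_2(P)|\le 2|\nabla\varphi(P)|\sin(\theta/2)$. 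Both arguments quantify the same obstruction (the patch cannot dip below the $x_1$-axis), but the paper's ray argument uses only that $\varphi\in C^{1,\gamma}(\overline\Omega)$ is positive inside, vanishes on $\partial\Omega$, and $\Omega\subseteq D$ --- exactly the bare hypotheses of Proposition \ref{proptech2de} --- whereas yours additionally invokes the arc-length parametrization of $\partial\Omega$ as a $C^1$ simple closed curve and the tangency $\W=|\W|\tau$. That extra structure is available in the application (the patch boundaries are $C^{1,\gamma}$ curves throughout the bootstrap), so nothing breaks, but the paper's version is marginally more robust; your version has the virtue of tracking the height of the boundary directly and of handling the wrap-around case cleanly via $\eta\ge 0$.
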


\begin{proof}
Denote by $\theta \in[0, \frac \pi 2 ]$ the angle between $\nabla \varphi(P)$ and the $x_2$-axis
(see Figure \ref{theta_fig}), so that
\begin{equation}\label{aug1906}
|w_2(P)| = |\nabla \varphi(P)| \sin \theta\le 2 |\nabla \varphi(P)| \sin \frac\theta 2.
\end{equation}
If $\theta=0$, then we are done. Otherwise,
let $\nu$ denote the unit vector such that the angle between $\nu$ and $\nabla \varphi(P)$ is
$\frac{\pi}{2}-\frac{\theta}{2}$ 
(so $\nu$ points inside $\Omega$ at $P$) and $\nu_2<0$. Draw a ray in the direction $\nu$ and originating at $P$, and denote
by $Q$ its intersection with the $x_1$-axis. Note that $Q\neq P$ since $p_2>0$ due to $\theta\neq 0$.
\begin{figure}[htbp]
\begin{center}
\includegraphics[scale=0.9]{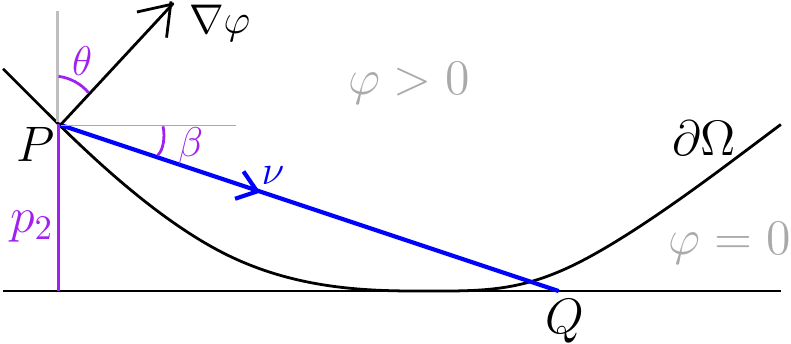}
\caption{The definitions of $\theta,\beta,\nu,Q$.\label{theta_fig}}
\end{center}
\end{figure}

The length of the segment $PQ$ is
\[
|PQ|= \frac{p_2}{\sin \beta},
\]
where either $\beta=\frac\theta 2$ or $\beta= \frac{3\theta} 2$, the latter if $(\nabla\varphi(P))_2<0$. In either case we have
\[
|PQ|\le\frac{p_2}{\sin(\theta/2)}.
\]
We also have
\[
\nabla\varphi(P) \cdot \nu = |\nabla \varphi(P)| \sin\frac{\theta}{2} > 0,
\]
and $\nabla\varphi \cdot \nu$  must change sign   on the segment $PQ$ because $Q\notin \Omega$ and $\varphi=0$
on $\partial\Omega$. As
\[
\|\nabla \varphi\|_{\dot C^\gamma(\Omega)} \leq A_\gamma,
\]
we obtain
\[
|\nabla \varphi(P)| \sin \frac{\theta}{2}
\leq A_\gamma \left(\frac{p_2}{\sin(\theta/ 2)} \right)^\gamma.
\]
Raising this to power $\frac 1{1+\gamma}$ and using (\ref{aug1906}) yields
\[
| w_2(P)|
\leq 2|\nabla \varphi(P)| \sin \frac\theta 2 \leq 2 \left(A_\gamma p_2^\gamma |\nabla \varphi(P)|^\gamma\right)^{\frac{1}{1+\gamma}}.
\]
Since $|\nabla \varphi(P)| = |\W(P)|$, the proof is complete.
\end{proof}

The above lemma applied at $P:=\bar P_x$, along with $|\W(\bar P_x)| =  |\tilde \W(P_x)|$,
now yields
\begin{equation}
T_2 (x)\leq 2\tilde C_\gamma A_\gamma d(x)^\gamma +  4\left(A_\gamma\, d(x)^\gamma |\tilde \W(P_x)|^\gamma\right)^{\frac{1}{1+\gamma}}.
\label{bound_t2}
\end{equation}
Next we bound $T_1$.
\begin{proposition}\label{T1prop}
With the hypotheses of Proposition \ref{proptech2de},  for $x,y\in\Omega$ with $d(x)\le d(y)$ we have
\begin{equation}
T_1(x,y) := \frac{|\nabla \tv  (x) - \nabla \tv  (y)|}{|x-y|^\gamma} \leq C_\gamma \left(1+\log_+\frac{A_\gamma}{A_{\inf}}\right) \min\left\{\frac{A_\gamma}{| \tilde \W(P_x)|}, d(x)^{-\gamma}\right\}.
\label{refined}
\end{equation}
\end{proposition}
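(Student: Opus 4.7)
The plan is to establish the minimum bound by proving each of the two competing estimates separately:
\begin{equation*}
\textup{(i) } T_1(x,y)\leq C_\gamma\Bigl(1+\log_+\tfrac{A_\gamma}{A_{\inf}}\Bigr) d(x)^{-\gamma}, \qquad \textup{(ii) } T_1(x,y)\leq C_\gamma\Bigl(1+\log_+\tfrac{A_\gamma}{A_{\inf}}\Bigr)\tfrac{A_\gamma}{|\tilde W(P_x)|}.
\end{equation*}
Their combination then yields the $\min$ bound \eqref{refined}.

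Bound (i) should be straightforward. I would split into the cases $|x-y|\geq d(x)/2$ and $|x-y|<d(x)/2$. In the first case, use the triangle inequality $|\nabla\tilde v(x)-\nabla\tilde v(y)|\leq 2\|\nabla\tilde v\|_{L^\infty}$, combined with the logarithmic $L^\infty$ bound \eqref{2.1} and $|x-y|^{-\gamma}\leq 2^\gamma d(x)^{-\gamma}$. In the second case, the entire segment $[x,y]$ lies at distance at least $d(x)/2$ from $\tilde\Omega$, so by differentiating the representation $\tilde v(z)=\int_{\tilde\Omega}(z-y')^\perp |z-y'|^{-2}dy'$ twice under the integral and comparing to the radial integral $\int_{r\geq d(x)/2}r^{-3}\cdot 2\pi r\,dr$, we get $|\nabla^2\tilde v(z)|\leq C/d(x)$ along the segment. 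The mean value theorem then gives $|\nabla\tilde v(x)-\nabla\tilde v(y)|\leq C|x-y|/d(x)$, and hence $T_1\leq C|x-y|^{1-\gamma}/d(x)\leq Cd(x)^{-\gamma}$.

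Bound (ii) is the serious part of the proposition. We may assume $|\tilde W(P_x)|>A_\gamma d(x)^\gamma$, since otherwise (i) already implies (ii). In this regime the natural geometric scale is $\rho := (|\tilde W(P_x)|/A_\gamma)^{1/\gamma}>d(x)$, and I would use the Hölder control $|\tilde W(z)-\tilde W(P_x)|\leq A_\gamma|z-P_x|^\gamma$ to show that in coordinates aligned with $\tilde W(P_x)$ the boundary $\partial\tilde\Omega\cap B(P_x,c\rho)$ is a Lipschitz graph $\eta=h(\xi)$ with $|h'|\leq \tfrac12$. My plan is then to split $\tilde v$ into a contribution from $\tilde\Omega\cap B(P_x,c\rho)$ and its complement. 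The far contribution obeys $|\nabla^2\tilde v^{\mathrm{far}}(z)|\leq C/\rho$ for $z$ near $x$ (same radial integral as in (i), but now the relevant length scale is $\rho$ rather than $d(x)$), so its Hölder seminorm is at most $C/\rho=CA_\gamma^{1/\gamma}|\tilde W(P_x)|^{-1/\gamma}$, which combined with a direct $L^\infty$ estimate via interpolation yields the desired bound up to the log. For the near contribution, I would apply the boundary integral representation $\nabla\tilde v(x)=-\int_{\partial\tilde\Omega}K(x-z)\otimes n(z)dS(z)$ and adapt the Bertozzi--Constantin proof of \eqref{2.1} in the localized strip, with $|\tilde W(P_x)|$ playing the role that $A_{\inf}$ plays globally, so that the logarithmic factor still comes from the worst-case ratio $A_\gamma/A_{\inf}$ while the prefactor picks up $|\tilde W(P_x)|^{-1}$.

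The main obstacle will be the near contribution in (ii): extending the BC $L^\infty$ argument to a scale-dependent Hölder estimate requires care, because the cancellation in $K(x-z)-K(y-z)$ must be extracted while simultaneously exploiting the near-flatness of $\partial\tilde\Omega$ inside $B(P_x,c\rho)$. The bookkeeping needed to ensure that $|\tilde W(P_x)|$ (rather than $A_{\inf}$) appears in the leading constant, while the unavoidable logarithm still involves the global $A_\gamma/A_{\inf}$, is the delicate step of the argument.
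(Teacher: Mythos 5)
Your overall architecture is right and matches the paper's: bound (i) is proved essentially as in Lemma~\ref{depauw1} (the paper routes the path through $Q_{xy}=(x_1,x_2+2|x-y|)$ rather than splitting at $|x-y|\sim d(x)$, but your version works and is even allowed an extra logarithm), and the reduction of (ii) to the regime $d(x)\lesssim \rho=(|\tilde\W(P_x)|/A_\gamma)^{1/\gamma}$, together with a far/near decomposition at the scale $\rho$, is exactly the paper's strategy (there $\rho$ is called $r_x$, and the case $|x-y|\gtrsim r_x$ is dispatched by $2\|\nabla\tv\|_{L^\infty}$ and \eqref{2.1}, which is where the logarithm actually enters).

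The gap is the near-field estimate, which you yourself flag as the ``main obstacle'' but do not resolve, and the geometric input you propose for it is too weak. Knowing only that $\partial\tilde\Omega\cap B(P_x,c\rho)$ is a Lipschitz graph with slope $\le\frac12$ is not enough: a Lipschitz corner does not even give a bounded $\nabla\tv$ near the boundary, and no adaptation of the Bertozzi--Constantin singular-integral argument will produce the required seminorm bound $C\rho^{-\gamma}$ from Lipschitz flatness alone. What is needed (and what the paper's Lemma~\ref{geometry_lemma} provides) is the $C^{1,\gamma}$ flatness: at distance $s$ from $P_x$ the boundary is confined to a cusp-shaped region $S_x$ of width $O(s^{1+\gamma}r_x^{-\gamma})$, not $O(s)$. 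The paper then avoids a direct Hölder estimate of the near field altogether: it proves the pointwise Hessian bound $|\nabla^2\tv(z)|\le C_\gamma d(z)^{-1+\gamma}r_z^{-\gamma}$ (Lemma~\ref{lemma:d2vx}) by comparing $\tilde\Omega$ with the osculating disc $\tilde B_x=B(P_x+r_xn_x,r_x)$ --- whose velocity field is explicit with $|\nabla^2 u_{\tilde B_x}|\le C/r_x$ --- and estimating the symmetric-difference integral $\int_{(\tilde\Omega\triangle\tilde B_x)\cap B_x}|x-y|^{-3}dy$ using precisely the $s^{1+\gamma}r_x^{-\gamma}$ width of $S_x$; it then integrates this Hessian bound along the two-segment path through $Q_{xy}$, where $d(z_i(s))\ge s|x-y|$, and the integral $\int_0^1(s|x-y|)^{-1+\gamma}ds$ converges with no logarithm. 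Your plan of extracting a log from the near part ``with $|\tilde\W(P_x)|$ playing the role of $A_{\inf}$'' is not only uncarried-out but aims at a weaker mechanism than what the proof actually requires; as written, the proposal does not contain the key idea that closes case (ii).
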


A related but weaker bound (which does not suffice here) was proved in \cite{d}.
Before proving Proposition~\ref{T1prop}, let us first complete the proof of Proposition~\ref{proptech2de}.

\begin{proof}[Proof of Proposition~\ref{proptech2de}]
The bound \eqref{refined} implies
\[
T_1 (x,y) \leq C_\gamma  \left(1+\log_+\frac{A_\gamma}{A_{\inf}}\right) \min\left\{ \left( \frac{A_\gamma}{| \tilde \W(P_x)|} \right)^{\tfrac{\gamma}{1+\gamma}} d(x)^{-\tfrac{\gamma}{1+\gamma}}, d(x)^{-\gamma}\right\}.
\]
Multiplying this by \eqref{bound_t2}  gives
\[
T_1(x,y) T_2(x) \leq C_\gamma A_\gamma \left(1+\log_+\frac{A_\gamma}{A_{\inf}}\right).
\]
As we have explained above, this yields \eqref{vww} and   concludes the proof.
\end{proof}

We are left with proving Proposition~\ref{T1prop}.
We start with the following simple lemma.
\begin{lemma}\label{depauw1}
When $d(x)\le d(y)$ for $x,y\in\Omega$, we have (with a universal $C<\infty$)
\begin{equation}\label{depauweasy}
\frac{|\nabla \tv  (x) - \nabla \tv  (y)|}{|x-y|^\gamma} \leq \frac{C}\gamma d(x)^{-\gamma}.
\end{equation}
\end{lemma}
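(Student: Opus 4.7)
\emph{Plan for Lemma \ref{depauw1}.} The strategy is to write
\[
\nabla\tv(x)-\nabla\tv(y) = \int_{\tilde\Omega}[K(x,y')-K(y,y')]\,dy',
\]
where $K(z,y'):=\nabla_z\frac{(z-y')^\perp}{|z-y'|^2}$ satisfies the kernel bounds $|K(z,y')|\le C|z-y'|^{-2}$ and $|\nabla_z K(z,y')|\le C|z-y'|^{-3}$, and to analyze the integral via two complementary pointwise estimates on $\tv$. Both estimates come from integrating these kernel bounds against the area-$1$ set $\tilde\Omega\subseteq\{|y'-z|\ge d(z)\}$: layer-cake arguments yield
\[
|\nabla\tv(z)|\le C\bigl(1+\log_+(1/d(z))\bigr), \qquad |\nabla^2\tv(z)|\le Cd(z)^{-1}.
\]
The first uses both the area constraint $|\tilde\Omega|=1$ and the separation $|y'-z|\ge d(z)$; the second only needs separation, via $\int_{|y'-z|\ge d(z)}|y'-z|^{-3}dy' = 2\pi/d(z)$.

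\emph{Short-range regime} $|x-y|\le d(x)/2$. Since $d$ is $1$-Lipschitz, every point of the segment $[x,y]$ lies at distance at least $d(x)/2$ from $\tilde\Omega$. The mean value theorem combined with the bound $|\nabla^2\tv(z)|\le Cd(x)^{-1}$ on that segment yields $|\nabla\tv(x)-\nabla\tv(y)|\le Cd(x)^{-1}|x-y|$, so that $T_1(x,y)\le Cd(x)^{-1}|x-y|^{1-\gamma}\le Cd(x)^{-\gamma}$ after inserting $|x-y|\le d(x)/2$ and using $\gamma\in(0,1]$.

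\emph{Long-range regime} $|x-y|>d(x)/2$. I split $\tilde\Omega$ into a near part $\{|y'-x|\le 2|x-y|\}$ and a far part $\{|y'-x|>2|x-y|\}$. For the far part, a first-order Taylor estimate (using $|z-y'|\ge|y'-x|/2$ for $z\in[x,y]$) gives $|K(x,y')-K(y,y')|\le C|x-y||y'-x|^{-3}$, and integrating over $\{|y'-x|>2|x-y|\}$ yields a contribution bounded by $C$, hence by $Cd(x)^{-\gamma}$ after dividing by $|x-y|^\gamma\ge(d(x)/2)^\gamma$. For the near part, we bound each kernel crudely by $|y'-\cdot|^{-2}$, and layer-cake integration over the annulus $d(x)\le|y'-x|\le 2|x-y|$ produces a logarithmic contribution of size $C\log_+(|x-y|/d(x))$. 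Setting $r:=|x-y|/d(x)\ge 1/2$ and using the elementary interpolation $\log r\le r^\gamma/\gamma$ (valid for $r\ge 1$ since the derivative of $r^\gamma/\gamma-\log r$ is nonnegative on $[1,\infty)$), the near-part contribution to $T_1$ is bounded by $C\log_+(r)r^{-\gamma}d(x)^{-\gamma}\le(C/\gamma)d(x)^{-\gamma}$.

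\emph{Main obstacle.} The principal difficulty is the logarithmic divergence of $\nabla\tv$ as $d(\cdot)\to 0$: a naïve triangle-inequality bound $|\nabla\tv(x)|+|\nabla\tv(y)|$ produces a $\log(1/d(x))$ factor that cannot be controlled by a pure power $d(x)^{-\gamma}$ without introducing a $\gamma$-dependent constant. Splitting the kernel integral into near and far pieces is precisely what pairs the logarithm with the compensating factor $(|x-y|/d(x))^{-\gamma}$, so that the interpolation $\log r\le r^\gamma/\gamma$ can absorb it cleanly into the stated $C/\gamma$ prefactor.
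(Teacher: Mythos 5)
Your argument is correct, and in the main regime it takes a genuinely different route from the paper. For $|x-y|\lesssim d(x)$ both proofs are essentially the same (mean value theorem plus the Hessian bound $|\nabla^2\tv(z)|\le Cd(z)^{-1}$), though you lower-bound $d$ on the segment $[x,y]$ via the $1$-Lipschitz property of the distance function, while the paper uses the reflection structure ($d(z)\in[z_2,2z_2]$) to get $d(Z_{xy})\ge d(x)/2$ --- both work. In the regime $|x-y|\gtrsim d(x)$ the approaches diverge: the paper deforms the straight segment into a path through $Q_{xy}=(x_1,x_2+2|x-y|)$, exploiting the geometric fact that moving vertically upward increases the distance to the reflected domain $\tilde\Omega$, and integrates the Hessian bound along this path; you instead perform a direct near/far decomposition of the kernel integral over $\tilde\Omega$ relative to the scale $|x-y|$, with a first-order Taylor estimate on the far part and crude pointwise kernel bounds plus the separation $|y'-x|\ge d(x)$, $|y'-y|\ge d(y)\ge d(x)$ (this is where the hypothesis $d(x)\le d(y)$ enters your argument) on the near part. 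Both routes produce the same logarithm $\log(|x-y|/d(x))$ and absorb it by the same elementary interpolation $\log r\le r^\gamma/\gamma$. The paper's path construction is economical within the paper, since the identical path and the Hessian estimate \eqref{hesscon} are reused in the proof of Lemma \ref{T1key}; your kernel splitting is more self-contained and more robust, as it does not use the half-plane reflection geometry at all and would apply verbatim to an arbitrary set $\tilde\Omega$ of finite measure disjoint from $\{x,y\}$.
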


\begin{proof}
The mean value theorem yields
\[
\frac{|\nabla \tv  (x) - \nabla \tv  (y)|}{|x-y|^\gamma} \leq |\nabla^2 \tv(Z_{xy})| |x-y|^{1-\gamma}
\]
for some point $Z_{xy}$ on the segment connecting $x$ and $y$.
Since $\tilde\Omega$ is the reflection of~$\Omega\subseteq D$ with respect to
the $x_1$-axis, we have $d(x)\in[x_2, 2x_2]$ and $d(y)\in[y_2, 2y_2]$.
As $d(x)\le d(y)$, we then obtain
\[
d(Z_{xy}) \geq {\rm min}\{x_2, y_2\} \geq \frac {d(x)}2.
\]
Moreover, for any $Z\in \Rm^2\setminus\tilde\Omega$ we have (with a universal $C<\infty$)
\begin{equation}\label{hesscon}
|\nabla^2 \tv(Z)| \leq \int_{\Rm^2 \setminus B(Z,d(Z))} \frac{C}{|Z-z|^3}\,dz \leq Cd(Z)^{-1}.
\end{equation}
Combining these estimates, we obtain
\[
\frac{|\nabla \tv  (x) - \nabla \tv  (y)|}{|x-y|^\gamma} \leq Cd(Z_{xy})^{-1} |x-y|^{1-\gamma} \leq 2C d(x)^{-1}|x-y|^{1-\gamma}.
\]
If $|x-y| \leq d(x)$, then (\ref{depauweasy}) follows because $\gamma\le 1$.

If $|x-y| \geq d(x)$,  let
\[
Q_{xy}= (x_1, x_2+2|x-y|),
\]
and connect $x$ and $y$ by a path consisting of the segments $[xQ_{xy}]$ and $[Q_{xy}y]$.
Then
\begin{equation}\label{aug2002}
|Q_{xy}-y|\leq 3|x-y|
\end{equation}
yields
\begin{equation} \label{2.11}
 \frac{|\nabla \tv  (x) - \nabla \tv  (y)|}{3|x-y|} \leq  \int_0^1 |\nabla^2 \tv(x+s(Q_{xy}-x))|\,ds + \int_0^1 |\nabla^2 \tv(y+s(Q_{xy}-y))|\,ds.
  \end{equation}
Note that
\begin{equation}\label{aug2008}
d(x+s(Q_{xy}-x)) \geq \max\{ d(x), 2s|x-y|\},
\end{equation}
and we also have
\[
d(y+s(Q_{xy}-y)) \geq s|x-y| \ge \frac s3 |Q_{xy}-y|
\]
due to
\[
(Q_{xy}-y)_2 \ge |x-y|
\]
and  \eqref{aug2002}.  It then follows  that
\[
d(y)\le d(y+s(Q_{xy}-y))+s|Q_{xy}-y|\le  4d(y+s(Q_{xy}-y)),
\]
so by $d(x)\le d(y)$ and the above we have
\[
d(y+s(Q_{xy}-y)) \geq \max \left\{ \frac{d(x)}{4}, s|x-y| \right\},
\]
in addition to (\ref{aug2008}).
Combining these estimates with \eqref{hesscon}, we obtain
\begin{equation*}
\begin{split}
 \frac{|\nabla \tv  (x) - \nabla \tv  (y)|}{|x-y|^\gamma} &\leq
C|x-y|^{1-\gamma} \left(\int_0^{\frac{d(x)}{|x-y|}} d(x)^{-1} ds +  \int_{\frac{d(x)}{|x-y|}}^1 (s|x-y|)^{-1} ds\right) \\
  &\leq C |x-y|^{-\gamma} \left( 1+ \log \frac{|x-y|}{d(x)} \right)
\leq \frac{C}\gamma d(x)^{-\gamma}
\end{split}
\end{equation*}
because $|x-y|\ge d(x)$.
\end{proof}

We continue the proof of Proposition~\ref{T1prop}.
Due to Lemma~\ref{depauw1}, to prove \eqref{refined} we only need to consider
the case
\[
d(x) \leq \tilde  C^{-1} \left(\frac{| \tilde \W(P_x)|}{A_\gamma}\right)^{1/\gamma}
\]
for any fixed $\tilde C<\infty$.  Let us pick $\tilde C:= 16(4\tilde C_\gamma)^{1/\gamma}$, with the universal constant $\tilde C_\gamma$ from the remark about Whitney extensions after \eqref{2.2}, so that if we let $\tilde A_\gamma := \|\tilde\W\|_{\dot C^\gamma(\mathbb{R}^2)}$ and
\[
r_x:=\left(\frac{| \tilde \W(P_x)|}{2\tilde A_\gamma}\right)^{1/\gamma},
\]
it suffices to consider $d(x) \leq 2^{-4-1/\gamma}r_{x}$ (because $\tilde A_\gamma \leq \tilde C_\gamma A_\gamma$).

Hence, the next lemma finishes the proof of Proposition~\ref{T1prop}.



\begin{lemma}\label{T1key}
When $d(x) \leq \min\{d(y), 2^{-4-1/\gamma}r_{x}\}$ for $x,y\in\Omega$,
we have (with a universal constant~$C_\gamma<\infty$)
\begin{equation}
 \frac{|\nabla \tv  (x) - \nabla \tv  (y)|}{|x-y|^\gamma}
\leq C_\gamma \left(1+\log_+\frac{A_\gamma}{A_{\inf}}\right) \frac{A_\gamma}{| \tilde \W(P_x)|}.
\label{goal2}
\end{equation}
\end{lemma}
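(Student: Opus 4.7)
The target bound in \eqref{goal2} improves on the generic estimate $d(x)^{-\gamma}$ of Lemma~\ref{depauw1} precisely when $d(x) \ll r_x$, i.e., when $x$ is close to a boundary point $P_x$ at which $|\tilde\W|$ is not too small. The mechanism driving the improvement is that the H\"older continuity of $\tilde\W$ with constant $\tilde A_\gamma$ forces $|\tilde\W| \geq |\tilde\W(P_x)|/2$ throughout $B(P_x, r_x)$; since $\tilde\W$ is tangent to $\partial\tilde\Omega$ and nonvanishing there, $\partial\tilde\Omega$ is locally a $C^{1,\gamma}$ graph with small slope in coordinates aligned with $\tilde\W(P_x)$, which suppresses the usual $1/d$ singularity of $|\nabla^2 \tv|$ in the normal direction.

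The plan is to split $\tilde\Omega = \tilde\Omega^{\mathrm{near}} \cup \tilde\Omega^{\mathrm{far}}$ with $\tilde\Omega^{\mathrm{near}} := \tilde\Omega \cap B(P_x, r_x/4)$, and to estimate the two contributions to the H\"older quotient separately. For $\tilde\Omega^{\mathrm{far}}$, the hypothesis $d(x) \leq 2^{-4-1/\gamma}r_x$ ensures that both $x$ and $y$ lie at distance at least $r_x/8$ from this set, so $|\nabla^2 \tv^{\mathrm{far}}(Z)| \leq C/r_x$ on the segment connecting $x$ and $y$ (or on a detour analogous to the one in the proof of Lemma~\ref{depauw1} when $|x-y| > r_x$). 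This yields a H\"older quotient of order $r_x^{-\gamma} = 2\tilde A_\gamma/|\tilde\W(P_x)| \leq C_\gamma A_\gamma/|\tilde\W(P_x)|$, which matches the target bound without the logarithmic factor.

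For $\tilde\Omega^{\mathrm{near}}$, I would choose local coordinates centered at $P_x$ with $e_1$ along $\tilde\W(P_x)/|\tilde\W(P_x)|$ and parametrize $\partial\tilde\Omega \cap B(P_x, r_x/2)$ as a graph $z_2 = \psi(z_1)$. The H\"older bound on $\tilde\W$ combined with $|\tilde\W| \geq |\tilde\W(P_x)|/2$ then yields $\|\psi'\|_{\dot C^\gamma} \leq C_\gamma \tilde A_\gamma/|\tilde\W(P_x)|$. Converting $\nabla\tv^{\mathrm{near}}(x)$ into a line integral over $\partial\tilde\Omega^{\mathrm{near}}$ via the divergence theorem, applied component-wise to the kernel $(x-z)^\perp/|x-z|^2$, reduces the analysis to a circular-arc contribution from $\tilde\Omega \cap \partial B(P_x,r_x/4)$ (handled as in the far case, since it is at distance $\geq r_x/8$ from $x,y$), plus a Cauchy-type integral over the graph $\psi$. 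The H\"older continuity in $x$ of the latter is controlled by $\|\psi'\|_{\dot C^\gamma}$ through the standard theory of singular integrals on Lipschitz graphs, multiplied by the logarithmic factor $(1+\log_+(A_\gamma/A_{\inf}))$ that absorbs the principal-value behavior when $x$ projects onto $\psi$ near $P_x$.

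The main obstacle will be executing the refined singular-integral estimate on the graph piece so that the constant depends on $\|\psi'\|_{\dot C^\gamma} \sim \tilde A_\gamma/|\tilde\W(P_x)|$ rather than merely $\|\psi'\|_{L^\infty}$, and extracting the precise logarithm $(1+\log_+(A_\gamma/A_{\inf}))$ --- which requires re-running the Bertozzi--Constantin-style estimate \eqref{2.1} in the local geometry --- instead of an unusable $\log(r_x/d(x))$. A Whitney-type splitting of the graph integral into pieces at scales comparable and much larger than $|x-y|$ is also needed in order to generate the clean $|x-y|^\gamma$ factor without spurious loss, and the tangential/normal decomposition of $\nabla\tv$ must be tracked carefully to ensure the improved bound applies to the full gradient rather than only to its projection along $\tilde\W$.
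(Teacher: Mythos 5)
Your proposal does not close the key step, and it takes a route the paper avoids. The heart of the paper's proof is the pointwise Hessian bound of Lemma~\ref{lemma:d2vx}: for $d(z)\le \frac14 r_{z}$ one has $|\nabla^2\tv(z)|\le C_\gamma d(z)^{-1+\gamma}r_{z}^{-\gamma}$, proved by comparing $\tilde\Omega$ with the osculating disc $B(P_x+r_{x}n_x,r_{x})$ (whose velocity field is explicit and has Hessian $O(r_{x}^{-1})$) and showing via the Geometric Lemma~\ref{geometry_lemma} that the symmetric difference inside $B(P_x,r_{x})$ lies in the thin region $S_x$, whose contribution integrates to $C_\gamma d^{-1+\gamma}r_{x}^{-\gamma}$. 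With this in hand, the paper simply integrates $|\nabla^2\tv|$ along the two-segment path through $Q_{xy}=(x_1,x_2+2|x-y|)$ as in \eqref{2.11}, using $d(z_i(s))\ge s|x-y|$ and the fact that all the $P_{z_i(s)}$ stay in $B(P_x,r_{x})$ so that $r_{z_i(s)}\gtrsim r_{x}$; the $s^{-1+\gamma}$ singularity is integrable and one gets $C_\gamma r_{x}^{-\gamma}\sim C_\gamma A_\gamma/|\tilde\W(P_x)|$ with \emph{no} logarithm. You instead propose to pass to graph coordinates, convert $\nabla\tv^{\mathrm{near}}$ to a boundary integral, and invoke ``standard theory of singular integrals on Lipschitz graphs'' with a constant proportional to $\|\psi'\|_{\dot C^\gamma}$. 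That is precisely the estimate the lemma is about, and you explicitly flag it as ``the main obstacle'' without resolving it; as written, the argument is circular at its core. The osculating-disc comparison (or some equivalent quantitative flatness statement like Lemma~\ref{geometry_lemma}) is the missing idea.

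Two further points. First, you misplace the logarithm: in the paper it enters \emph{only} in the trivial regime $|x-y|\gtrsim r_{x}$, where $|x-y|^{-\gamma}\lesssim A_\gamma/|\tilde\W(P_x)|$ and one bounds $|\nabla\tv(x)-\nabla\tv(y)|$ by $2\|\nabla\tv\|_{L^\infty}$ via \eqref{2.1}; trying to ``re-run the Bertozzi--Constantin estimate in the local geometry'' to extract $\log_+(A_\gamma/A_{\inf})$ from the near-field singular integral is a dead end and is not needed. Second, your claim that $d(x)\le 2^{-4-1/\gamma}r_{x}$ forces \emph{both} $x$ and $y$ to be at distance $\ge r_{x}/8$ from $\tilde\Omega^{\mathrm{far}}$ is false: the hypothesis only gives $d(y)\ge d(x)$, and $y$ may sit arbitrarily close to a portion of $\tilde\Omega$ outside $B(P_x,r_{x}/4)$. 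In that situation $|x-y|\gtrsim r_{x}$, so the case belongs to the trivial regime above rather than to your far-field Hessian estimate; your parenthetical ``detour'' remark gestures at this but does not supply the case split that actually makes the argument work.
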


In the proof of this lemma, the following improvement of \eqref{hesscon} will
be used to control~$|\nabla \tv  (x) - \nabla \tv  (y)|$.
Its proof is postponed until the end of this section.
\begin{lemma}\label{lemma:d2vx}
For any $x\in \Rm^2\setminus\tilde\Omega$ with $d(x) \in(0,\frac 14r_{x}]$,
we have (with a universal $C_\gamma<\infty$)
\[
|\nabla^2 \tv  (x)| \leq C_\gamma d(x)^{-1+\gamma} r_{x}^{-\gamma}.
\]
\end{lemma}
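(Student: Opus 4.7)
The plan is to refine the crude estimate $|\nabla^2 \tv(x)|\le C/d(x)$ from \eqref{hesscon} by decomposing the volume integral $\nabla^2 \tv(x) = \int_{\tilde\Omega} \nabla^2 K(x-y)\,dy$ (where $K(z)=z^\perp/|z|^2$) and comparing $\tilde\Omega$ near $P_x$ to the tangent half-plane. First, I would translate and rotate coordinates so that $P_x=0$ and the tangent to $\partial\tilde\Omega$ at $P_x$ is the $y_1$-axis, so that $x=(0,d(x))$. Using the tangency of $\tilde W$ to $\partial\tilde\Omega$, the lower bound $|\tilde W(P_x)|=2\tilde A_\gamma r_x^\gamma$, and $|\tilde W(y)-\tilde W(P_x)|\le\tilde A_\gamma|y-P_x|^\gamma$, the tangent angle along $\partial\tilde\Omega$ rotates by at most $C(|y-P_x|/r_x)^\gamma$; hence, after shrinking $r_x$ by a universal factor if necessary, $\partial\tilde\Omega$ is a graph $y_2=f(y_1)$ on $B(P_x,r_x)$ with $|f'(y_1)|\le C_\gamma(|y_1|/r_x)^\gamma$ and therefore $|f(y_1)|\le C_\gamma|y_1|^{1+\gamma}/r_x^\gamma$.

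With $B:=B(P_x,r_x)$ and $H:=\{y_2<0\}$ the tangent half-plane, I split
\[
\nabla^2 \tv(x) = \int_{H\cap B} \nabla^2 K(x-y)\,dy + \int_{(\tilde\Omega\triangle H)\cap B} \pm \nabla^2 K(x-y)\,dy + \int_{\tilde\Omega\setminus B} \nabla^2 K(x-y)\,dy.
\]
For the far-field term, $|x-y|\ge r_x-d(x)\ge r_x/2$ combined with $|\nabla^2 K(z)|\le C|z|^{-3}$ yields the bound $C/r_x$, which is dominated by $d(x)^{-1+\gamma}r_x^{-\gamma}$ since $d(x)\le r_x$. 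The symmetric-difference term is supported in $\{|y_2|\le|f(y_1)|\}\cap B$; splitting by $|y_1|\le 2d(x)$ (where $|y_2|\ll d(x)$ forces $|x-y|\ge d(x)/2$) versus $|y_1|>2d(x)$ (where $|x-y|\ge|y_1|$), direct estimation using the graph bound on $|f|$ gives $\le C_\gamma d(x)^{-1+\gamma}r_x^{-\gamma}$ in each case (the second case reducing to $\int_{2d(x)}^{r_x} |y_1|^{-2+\gamma}d|y_1|\le C_\gamma d(x)^{-1+\gamma}$).

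The half-plane contribution is the main obstacle. In polar coordinates $z=x-y=\rho(\cos\theta,\sin\theta)$ centered at $x$, each component of $\nabla^2 K(z)$ takes the form $F(\theta)/\rho^3$ with $F\in\mathrm{span}\{\sin(3\theta),\cos(3\theta)\}$, as one checks by direct differentiation of $K(z)=(\sin\theta,-\cos\theta)/\rho$. The set $H\cap B$ in these polar coordinates is $\{\theta\in(\pi,2\pi):\,d(x)/|\sin\theta|<\rho<\rho_+(\theta)\}$ with $\rho_+(\theta)\ge r_x/2$, so the radial integration gives
\[
\int_{H\cap B} \nabla^2 K(x-y)\,dy = \frac{1}{d(x)}\int_\pi^{2\pi} F(\theta)|\sin\theta|\,d\theta - \int_\pi^{2\pi} \frac{F(\theta)}{\rho_+(\theta)}\,d\theta.
\]
The crucial cancellation is $\int_\pi^{2\pi} F(\theta)|\sin\theta|\,d\theta=0$ for each of $F=\sin(3\theta)$ and $F=\cos(3\theta)$, which follows because on $(\pi,2\pi)$ one has $|\sin\theta|=-\sin\theta$ and the product $F(\theta)\sin\theta$ decomposes via product-to-sum into $\{\sin(2\theta),\cos(2\theta),\sin(4\theta),\cos(4\theta)\}$, each of which integrates to zero on $[\pi,2\pi]$. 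This eliminates the dangerous $d(x)^{-1}$ term, leaving only $O(1/r_x)\le O(d(x)^{-1+\gamma}r_x^{-\gamma})$. Combining the three pieces yields the lemma.
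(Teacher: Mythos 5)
Your argument is correct in substance but takes a genuinely different route for the main term. The paper does not compare $\tilde\Omega$ to the tangent half-plane; it compares it to the tangent disc $\tilde B_x=B(P_x+r_x n_x,\,r_x)$. Outside that disc its velocity field is computed exactly, $u_{\tilde B_x}(z)=\pi r_x^2(z-O_x)^\perp/|z-O_x|^{2}$, so the bound $|\nabla^2 u_{\tilde B_x}(x)|\le C/r_x$ in \eqref{d2uB} follows by direct differentiation with no cancellation identity needed, and since the comparison domain is bounded the only remaining pieces are the crude far-field integral over $\Rm^2\setminus B_x$ and the integral over $(\tilde\Omega\triangle\tilde B_x)\cap B_x$, as in \eqref{eq:d2v_estimate}. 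Your half-plane version replaces the explicit disc formula by the angular identity $\int_\pi^{2\pi}F(\theta)|\sin\theta|\,d\theta=0$ for $F\in\mathrm{span}\{\sin 3\theta,\cos 3\theta\}$, which is correct (it encodes the fact that a half-plane of vorticity generates a constant velocity outside itself, hence contributes nothing to $\nabla^2\tv$); your treatment of the symmetric difference is then essentially identical to the paper's estimate of the term $I$, since your neighborhood $\{|y_2|\le C|y_1|^{1+\gamma}r_x^{-\gamma}\}$ is the same thin set as $S_x$. What the disc buys is the absence of the truncation bookkeeping your route requires: because $H$ is unbounded you can only compare inside $B$, so the radial integration must stop at $\rho_+(\theta)$ and, more importantly, for $|\sin\theta|\lesssim d(x)/r_x$ the ray from $x$ never meets $H\cap B$, so the exact cancellation applies only on a sub-range of angles; the excluded range contributes $O(d(x)/r_x^{2})=O(1/r_x)$, which is harmless but should be stated. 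Two further small points: you do not need (and have not quite proved) that $\partial\tilde\Omega\cap B$ is a single-valued graph --- the two-sided containment of Lemma \ref{lemma:geometric} (every point of $\partial\tilde\Omega\cap B_x$ lies in the thin set, and points outside it lie in $\tilde\Omega$ or its complement according to the sign of $\nu\cdot n_x$) is exactly what your H\"older argument yields and is all the symmetric-difference estimate uses; and in the near regime $|y_1|\le 2d(x)$ the inequality $|x-y|\ge d(x)/2$ requires the constant in the thickness bound to be at most $\tfrac 12$, which does hold with the normalization $r_x=\bigl(|\tilde \W(P_x)|/2\tilde A_\gamma\bigr)^{1/\gamma}$ (compare the paper's observation that ${\rm dist}(x,S_x)\ge \tfrac{d(x)}{2}$).
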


\begin{proof}[Proof of Lemma \ref {T1key}]
Let us first assume
\[
|x-y| \geq 2^{-4-1/\gamma} r_{x},
\]
so that
\[
|x-y|^{-\gamma} \leq 64 \tilde C_\gamma\frac{A_\gamma}{|\tilde \W(P_x)|}.
\]
Then \eqref{goal2}  follows from
the estimate
\[
|\nabla \tv  (x)-\nabla \tv  (y)| \leq 2\|\nabla \tv  \|_{L^\infty(\Rm^2)}
\]
and  \eqref{2.1} (the latter holds for any $\Omega,\varphi$ as in Proposition \ref {proptech2de} --- see \cite[Proposition 1]{bc}).


Assume now that $|x-y|< 2^{-4-1/\gamma}  r_{x}$.
As in Lemma~\ref{depauw1}, let
\[
Q_{xy} = (x_1, x_2+2|x-y|),
\]
and connect the points $x$ and $y$ by a path consisting of the two segments $[xQ_{xy}],~[Q_{xy}y]$, again parametrized  by
\[
z_1(s) = x + s(Q_{xy}-x) \qquad\text{and}\qquad z_2(s) = y + s(Q_{xy}-y),
\]
for $s\in[0,1]$ (see Figure \ref{path}).
Then we again have
\[
d(z_i(s)) \geq s|x-y|
\]
 for $i=1,2$ and $s\in[0,1]$.
\begin{figure}[htbp]
\begin{center}
\includegraphics[scale=0.8]{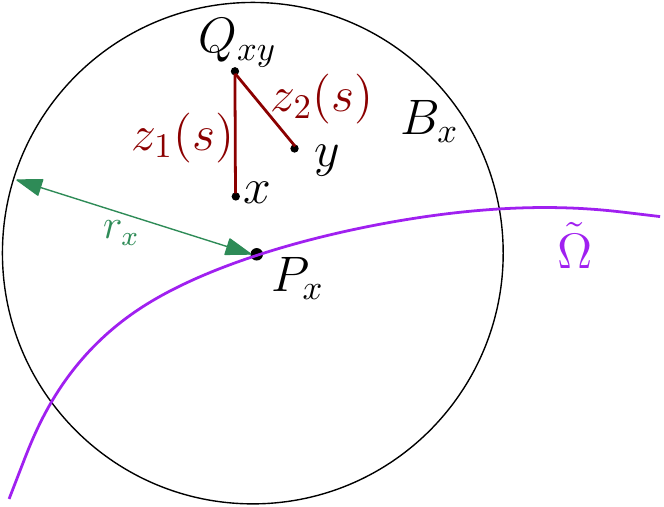}
\caption{The point $Q_{xy}$ and the paths $z_1(s)$ and $z_2(s)$.\label{path}}
\end{center}
\end{figure}

We also have
\[
|z_i(s)- P_x| \leq |z_i(s)-x| + d(x) \le 2|x-y|+d(x),
\]
so
\begin{equation}\label{aug2104}
d(z_i(s))\le 2^{-2-1/\gamma}r_{x}.
\end{equation}
These   imply
\begin{equation}\label{aug2102}
P_{z_i(s)} \in B(P_x, 2^{-1-1/\gamma}r_{x})\subseteq B_x:={B(P_x, r_{x})}.
\end{equation}
Note that for all $z\in B_x$ we have
\begin{equation}
|\tilde{\W}(z)-\tilde{\W}(P_x)|\leq \frac{|\tilde{\W}(P_x)|}{2}.
\label{eq:RPx}
\end{equation}
Thus, (\ref{aug2102})   gives
\[
|\tilde \W(P_{z_i(s)})| \geq \frac{1}{2} |\tilde \W(P_x)|,
\]
implying
\[
r_{{z_i(s)}} \geq 2^{-1/\gamma} r_{x}.
\]
From (\ref{aug2104}) it now follows that
\[
d(z_i(s))\le \tfrac 14 r_{z_i(s)}.
\]
Thus, Lemma \ref{lemma:d2vx} applies to  $z_i(s)$ and yields (together with the above estimates)
\[
 |\nabla^2 \tv  (z_i(s))| \leq C_\gamma d(z_i(s))^{-1+\gamma} r_{{z_i(s)}}^{-\gamma} \leq 2C_\gamma (s|x-y|)^{-1+\gamma} r_{{x}}^{-\gamma}.
\]
Then \eqref{2.11} implies
 \[
 \frac{|\nabla \tv  (x) - \nabla \tv  (y)|}{|x-y|^\gamma}
 \leq   12C_\gamma |x-y|^{1-\gamma} \int_0^1 \left(s|x-y| \right)^{-1+\gamma} r_{x}^{-\gamma}ds
 \leq \frac{12C_\gamma}\gamma r_{x}^{-\gamma} \le \frac{24C_\gamma\tilde C_\gamma}\gamma \frac{A_\gamma}{|\tilde \W(P_x)|},
\]
which gives \eqref{goal2}.
\end{proof}

\subsection{Proof of Theorem \ref{thmeuler1} in the general case}
\label{sec:pf2}


We now consider an initial condition $\omega_0$ with an arbitrary number of patches and arbitrary values of $\theta_k$
as in the statement of Theorem \ref{thmeuler1}, and extend it as an odd function  to~$x_2<0$.
By \cite[Theorems 8.1 and 8.2]{mb}, there is a unique global weak solution $\omega$ to \eqref{sqg} with the whole plane flow
\begin{equation}
u(x, t) =  \int_{\Rm^2}  \frac{(x-y)^\perp}{|x-y|^{2}} \omega(y,t) dy,
\label{eq:velocity_law_R}
\end{equation}
and the initial data $\omega(\cdot,0)=\omega_0$, in the sense that
\[
\int_D \omega (x,T)g(x,T)dx - \int_D \omega_0(x)g(x,0)dx = \int_{D\times(0,T)} \omega(x,t)[\partial_t g(x,t)+u(x,t)\cdot\nabla g(x,t)]dxdt
\]
for all $T<\infty$ and  $g\in C^1(\bar D\times [0,T])$.
This  solution is also a collection of vortex patches
\[
\omega(\cdot, t) = \sum_{k=1}^N \theta_k \chi_{\Omega_{k}(t)},
\]
with $\Omega_{k}(t) = \Phi_t(\Omega_{k}(0))$ for each $k$  \cite[Chapter 2, Theorem 3.1]{MP}.  Note that $\Phi_t(x)$ is uniquely defined for any $x\in\Rm^2$, due to the time-uniform log-Lipschitz apriori bound
\begin{equation} \label{2.23a}
 |u(x,t)-u(y,t)| \leq C_{\omega_0} |x-y| \log \left( 1 + |x-y|^{-1} \right)
\end{equation}
for $u$  (see, e.g., \cite[Lemma 8.1]{mb}), with the constant   depending only on  $\|\omega_0\|_{L^1}$ and~$\|\omega_0\|_{L^\infty}$.
Uniqueness shows that $\omega$ remains odd in $x_2$, thus its restriction to $D\times[0,\infty)$ is also the unique weak solution
to \eqref{sqg}, \eqref{eq:velocity_law_R}
(and it is unique such with $\omega(\cdot,0)=\omega_0$ because an odd-in-$x_2$ extension of a weak solution on $D\times[0,\infty)$ is a weak
solution on $\Rm^2\times[0,\infty)$).
It follows from \eqref{eq:alpha}, continuity of $u$ (which is obtained as the last claim in Lemma \ref{lemma:uniform_u_bound}
below but using  \eqref{2.23a} instead of \eqref{uHold}), and compactness of $\partial\Omega(t)\times\{t\}$ that \eqref{1.3}
holds for each $t>0$.  Hence, if we show that~$\{\partial\Omega_k(t)\}_{k=1}^N$ is a family of disjoint simple closed curves
for each $t\ge 0$, and
\[
\sup_{t\in[0,T]} \max_k \|\Omega_k(t)\|_{C^{1,\gamma}}<\infty
\]
for each $T<\infty$, then $\omega$ will also be a $C^{1,\gamma}$ patch solution to \eqref{sqg}-\eqref{eq:velocity_law} on $[0,\infty)$.  Moreover, since $C^{1,\gamma}$ patch solutions are weak solutions in the above sense as well (it is easy to see that \eqref{1.6} implies this), $\omega$ must then also be the unique patch solution.

Note that \eqref{2.23a} yields
\[
\min_{i\neq k} {\rm dist} ( \Omega_i(t),  \Omega_k(t)) \ge \delta(t)>0
\]
for all $t\ge 0$, where $\delta(t)$ decreases double exponentially in time.
This will ensure that the effects of the patches on each other will be controlled.  Therefore, it remains to prove that
each $\partial\Omega_k(t)$ is a simple closed curve with  $\|\partial\Omega_k(t)\|_{C^{1,\gamma}}$ uniformly bounded on bounded intervals.

Let us decompose
\[
u=\sum_{i=1}^N u_i,
\]
with each $u_i$ coming from the contribution of the patch $\Omega_i$ to $u$.  If $i\neq k$, then
obviously
\[
\|\nabla^n u_i(\cdot, t)\|_{L^\infty(\Omega_k(t))} \leq C(\omega_0,n) \delta(t)^{-n-1},
\]
for all $n\geq 0$.
This yields
\[
\|\nabla u_i(\cdot,t)\|_{\dot C^\gamma(\Omega_k(t))} \leq C(\omega_0) \delta(t)^{-3}
\]
for $i\neq k$.  Also, simple scaling shows that \eqref{2.1} now becomes (for each $i$ and with $\V_i,\tv_i$ defined analogously to $\V,\tv$)
\[
\|\nabla \V_i(\cdot,t)\|_{L^\infty(\mathbb{R}^2)} + \|\nabla \tv_i(\cdot,t)\|_{L^\infty(\mathbb{R}^2)} \leq C_\gamma |\theta_i| \left(1+\log_{+}\frac{A_\gamma(t)  |\Omega_i(t)|^{\gamma/2}}{A_{\inf}(t)}\right).
\]

We now consider a separate $\varphi_k$ and $w_k:=\nabla^\perp \varphi_k$ for each $\Omega_k$, all
$\varphi_k$ evolving with velocity $u$.  We also add $\sup_k$ in the definitions of $A_\gamma$ and $A_\infty$ and
$\inf_k$ in the definition of~$A_{\inf}$.  We can repeat the proof above, with
 \eqref{eq:goal} replaced by (for each $k$ and  $t>0$)
\begin{equation*}
\begin{split}
\|(\nabla u) \W_k\|_{\dot C^\gamma(\Omega_k)} &\leq C_\gamma \Theta A_\gamma
\left(|\Omega|+\log_+ \frac{A_\gamma}{A_{\inf}}\right) \\
&+ \sum_{i\neq k} \left(\|\nabla u_i\|_{L^\infty(\Omega_k)} \|\W_k\|_{\dot C^\gamma(\Omega_k)} + \|\nabla u_i\|_{\dot C^\gamma(\Omega_k)}
\|\W_k\|_{L^\infty(\Omega_k)}\right)\\
&\leq C_\gamma N \Theta A_\gamma
\left(|\Omega|+\log_+ \frac{A_\gamma}{A_{\inf}}\right)
+ C(\omega_0)N \delta^{-3} A_\infty,
\end{split}
\end{equation*}
where
\[
\Theta:=\max_{1\le k\le N}|\theta_k|
\qquad\text{and}\qquad |\Omega|:= 1+\max_{1\le k\le N}|\Omega_k(t)| = 1+\max_{1\le k\le N}|\Omega_k(0)|.
\]
Then \eqref{123} is replaced by
\begin{equation*}
A_\gamma'(t) \leq C_\gamma N \Theta A_\gamma (t)
\left(|\Omega|+\log_+ \frac{A_\gamma(t)}{A_{\inf}(t)}\right)
+ C(\omega_0)N \delta(t)^{-3} A_\infty (t).
\end{equation*}
From this and \eqref{eq2}, \eqref{eq1} a simple computation shows that
\[
\tilde A(t) := A_\gamma(t) A_{\inf}(t)^{-1} + A_\infty(t)
\]
satisfies
\begin{equation*}
\tilde A'(t) \leq C(\gamma, N, \omega_0) \tilde A(t) \left(\delta(t)^{-3} + \log_+\tilde A(t)\right).
\end{equation*}
Since $\delta(t)^{-3}$ increases at most double exponentially in time, it follows that $\tilde A(t)$ increases at most triple exponentially.
As before, this implies that each $\partial\Omega_k(t)$ is a simple closed curve with  $\|\partial\Omega_k(t)\|_{C^{1,\gamma}}$ uniformly bounded on bounded intervals.  Hence $\omega$ is a global~$C^{1,\gamma}$ patch solution
to  \eqref{sqg}-\eqref{eq:velocity_law}, thus  finishing the proof.

\subsection{Proof of Lemma \ref{lemma:d2vx}}
\label{sec:pfl}

Let us start with a simple geometric result concerning the behavior of $\partial \tilde\Omega$ near $P_x$, which is similar to the Geometric Lemma in \cite{bc}.  It says that $\partial \tilde\Omega\cap B_x$ is sufficiently ``flat''.
\begin{lemma} \label{geometry_lemma}
Given $x\in \Rm^2\setminus\tilde\Omega$, let $n_x:= \nabla \tilde \varphi(P_x) /|\nabla \tilde \varphi(P_x)|$, and
\begin{equation}
S_x:= \left\{P_x + \rho \nu \,:\, \rho\in[0,r_{x}), \, |\nu| = 1,\,  \left(\frac{\rho}{r_{x}}\right)^\gamma \ge 2 |\nu \cdot n_x |  \right\}.
\label{def_blue}
\end{equation}
If $\nu$ is a unit vector and $\rho\in[0,r_{x})$, then the following hold.  If $\nu\cdot n_x \ge 0$ and $P_x+\rho\nu \not \in S_x$, then $P_x + \rho\nu \in \tilde\Omega$. If $\nu\cdot n_x \le 0$ and $P_x+\rho\nu \not \in S_x$, then $P_x + \rho\nu \in \Rm^2\setminus \tilde\Omega$.
\label{lemma:geometric}
\end{lemma}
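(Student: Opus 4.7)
The strategy is to Taylor expand the (Whitney-extended) scalar $\tilde\varphi$ about the boundary point $P_x$ and to compare its linear part with the H\"older remainder along the segment $P_x+\rho\nu$. Using $\tilde\varphi(P_x)=0$ together with the fundamental theorem of calculus,
\[
\tilde\varphi(P_x+\rho\nu) \;=\; \rho\,\nabla\tilde\varphi(P_x)\cdot\nu \;+\; \int_0^\rho \bigl[\nabla\tilde\varphi(P_x+s\nu)-\nabla\tilde\varphi(P_x)\bigr]\cdot\nu\,ds.
\]
Since $\nabla\tilde\varphi(P_x)=|\tilde\W(P_x)|\,n_x=2\tilde A_\gamma r_x^\gamma\,n_x$ by the definitions of $n_x$ and $r_x$, and $\|\nabla\tilde\varphi\|_{\dot C^\gamma(\Rm^2)}\le\tilde A_\gamma$ for the Whitney extension, this yields
\[
\tilde\varphi(P_x+\rho\nu) \;=\; 2\tilde A_\gamma r_x^\gamma\,\rho\,(\nu\cdot n_x) + R(\rho,\nu),\qquad |R(\rho,\nu)|\le \tfrac{\tilde A_\gamma}{1+\gamma}\,\rho^{1+\gamma}.
\]

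When $P_x+\rho\nu\notin S_x$ with $\rho>0$, the defining inequality of $S_x$ is violated, i.e.\ $2|\nu\cdot n_x|>(\rho/r_x)^\gamma$, which forces the absolute value of the linear term to exceed $\tilde A_\gamma\rho^{1+\gamma}$; since $\gamma>0$, this strictly dominates $|R(\rho,\nu)|$. Hence $\tilde\varphi(P_x+\rho\nu)$ is nonzero and has the same sign as $\nu\cdot n_x$. Moreover, for fixed $\nu$ the condition $P_x+\rho'\nu\notin S_x$ persists for all $\rho'\in(0,\rho]$, so the same dichotomy holds along the entire open segment from $P_x$ to $P_x+\rho\nu$; in particular $\tilde\varphi$ vanishes nowhere on that segment, i.e.\ no interior point of the segment lies on $\partial\tilde\Omega$.

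It remains to convert the sign information into set membership. Because $\tilde\varphi>0$ on $\tilde\Omega$ and $\tilde\varphi=0$ on $\partial\tilde\Omega$, the vector $n_x$ is the inward unit normal to $\partial\tilde\Omega$ at $P_x$. Consequently, for all sufficiently small $\rho'>0$ the point $P_x+\rho'\nu$ lies in $\tilde\Omega$ when $\nu\cdot n_x>0$, and in the open exterior $\Rm^2\setminus\overline{\tilde\Omega}$ when $\nu\cdot n_x<0$. Since the remainder of the segment, from $P_x+\rho'\nu$ to $P_x+\rho\nu$, is connected and, by the previous paragraph, avoids $\partial\tilde\Omega$, it must stay in that open side throughout, yielding precisely the two conclusions of the lemma.

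The main obstacle, and the reason the connectedness step is necessary, is that the Whitney-extended $\tilde\varphi$ is not guaranteed to have a globally consistent sign outside $\tilde\Omega$, so membership in $\tilde\Omega$ cannot be read off directly from $\mathrm{sgn}\,\tilde\varphi$. Combining the H\"older-based sign analysis along the segment (which excludes crossings of $\partial\tilde\Omega$) with the initial orientation supplied by $n_x$ (which fixes the starting side) is exactly what allows us to sidestep this issue.
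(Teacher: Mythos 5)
Your argument is correct, and its analytic core is the same as the paper's: the first--order expansion $\tilde\varphi(P_x+\rho\nu)=\rho\,\nabla\tilde\varphi(P_x)\cdot\nu+R$ with $|R|\le\tilde A_\gamma\rho^{1+\gamma}/(1+\gamma)$, normalized by $|\nabla\tilde\varphi(P_x)|=2\tilde A_\gamma r_x^{\gamma}$, so that failure of the defining inequality of $S_x$ makes the linear term strictly dominate the remainder. Where you genuinely differ is in the final step that converts the sign of $\tilde\varphi$ into membership in $\tilde\Omega$. The paper argues by contraposition: assuming $\nu\cdot n_x\ge 0$ and $P_x+\rho\nu\notin\tilde\Omega$, it writes $\tilde\varphi(P_x+\rho\nu)\le 0$ and concludes $2\,\nu\cdot n_x\le(\rho/r_x)^\gamma$ in one line. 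That inequality $\tilde\varphi\le 0$ is immediate on $\partial\tilde\Omega$ but, for points outside $\overline{\tilde\Omega}$, tacitly uses a sign property of the Whitney extension that is not guaranteed --- precisely the issue you flag. Your detour (showing $\tilde\varphi\ne 0$ along the entire segment so that it never meets $\partial\tilde\Omega$, fixing the starting side via the inward normal $n_x$ at the $C^1$ boundary point $P_x$, and concluding by connectedness of the two components of the complement of the Jordan curve $\partial\tilde\Omega$) removes that implicit assumption at the cost of a short topological argument, and is therefore slightly longer but more robust. A more economical repair of the paper's version, closer in spirit to its contrapositive, is to run the same estimate only up to the last parameter $\rho^*\le\rho$ with $P_x+\rho^*\nu\in\partial\tilde\Omega$, where $\tilde\varphi=0$ exactly; this gives $2|\nu\cdot n_x|\le(\rho^*/r_x)^\gamma\le(\rho/r_x)^\gamma$ and hence $P_x+\rho\nu\in S_x$. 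Either way the conclusion stands; your write-up is complete.
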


In particular, $\partial \tilde\Omega\cap B_x \subseteq S_x$ (see Figure \ref{claim}).

\begin{figure}[htbp]
\begin{center}
\includegraphics[scale=0.8]{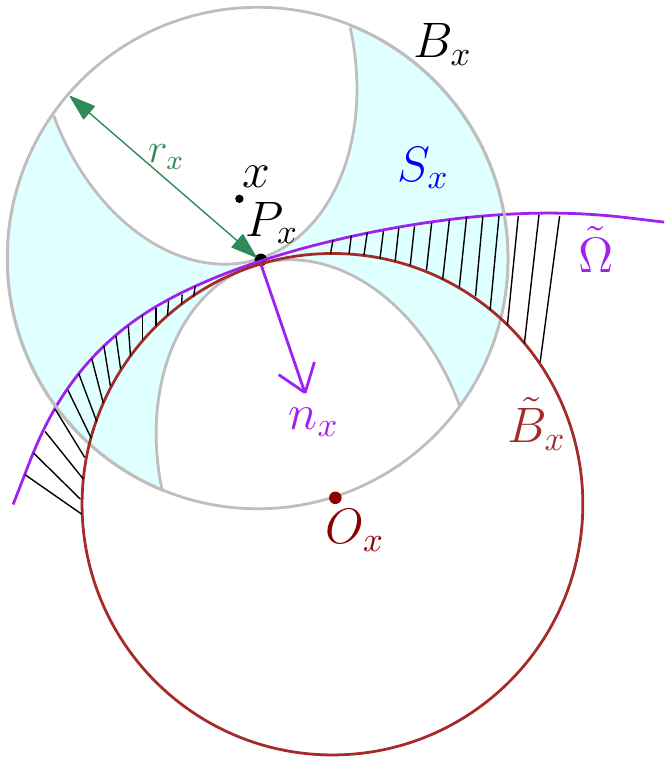}
\caption{The sets $S_x$ (shaded), $\tilde B_x$, and $\tilde\Omega \triangle \tilde B_x$ (lined).
\label{claim}}
\end{center}
\end{figure}

\begin{proof}
We only prove the first statement, as the proof of the second is analogous.  Let us assume $\nu\cdot n_x\ge 0$ and $P_x + \rho \nu \not\in \tilde\Omega$, with $|\nu|=1$ and $\rho\ge 0$. Then
\[
\nabla \tilde \varphi(P_x) \cdot \nu\ge 0 \qquad\text{and}\qquad \tilde\varphi(P_x + \rho \nu)\le 0,
\]
so we must have
$\nabla \tilde \varphi(P_x) \cdot \nu \leq \tilde A_\gamma \rho^\gamma$ because $\tilde\varphi(P_x)=0$.  Thus
\[
2\nu \cdot n_x \leq \frac{ 2\tilde A_\gamma \rho^\gamma} {|\nabla \tilde \varphi(P_x)|} = \left(\frac{\rho}{r_{x}} \right)^\gamma,
\]
so either $\rho\ge r_x$ or $P_x+\rho\nu  \in S_x$.
\end{proof}


\begin{proof}[Proof of Lemma \ref{lemma:d2vx}.]
Let $n_x,S_x$ be from Lemma \ref{lemma:geometric} and let $\tilde B_x := B(O_x, r_{x})$, where
\[
O_x :=P_x + r_{x}n_x.
\]
Then $P_x \in \partial \tilde B_x$ and the unit inner normal to $\partial \tilde B_x$ at $P_x$ is $n_x$.
We have $\partial \tilde B_x \cap B_x \subseteq S_x$ because if $P_x+\rho\nu\in \partial \tilde B_x \cap B_x$
with $|\nu|=1$ and $\rho> 0$, then
\[
r_x>\rho=2 r_x|\nu\cdot n_x|.
\]
Combining this with Lemma \ref{lemma:geometric} directly yields
\begin{equation}
(\tilde\Omega\triangle \tilde B_x)\cap  B_x \subseteq S_x,
\label{eq:in_S}
\end{equation}
with
\[
\tilde\Omega\triangle \tilde B_x:=(\tilde\Omega \backslash \tilde B_x)\cup (\tilde B_x \backslash \tilde\Omega)
\]
the symmetric difference of $\tilde\Omega$ and $\tilde B_x$ (the lined region in Figure \ref{claim}).
Let
\[
u_{\tilde B_x}(z): = \int_{\tilde B_x} \frac{(z-y)^{\perp}}{|z-y|^2} dy = 2\pi (\nabla^\perp \Delta^{-1} \chi_{\tilde B_x})(z)
\]
be the velocity field corresponding to the disc $\tilde B_x$. When  $|z-O_x| > r_{x}$, we have
by the rotational invariance of $u_{\tilde B_x}$ (and with $n$ the outer unit normal vector to $\partial B(O_x, |z-O_x|)$)
 \[
\begin{split}
u_{\tilde B_x}(z) &= \frac{(z-O_x)^\perp}{|z-O_x|} \left|u_{\tilde B_x}(z)\right|
\\ & =
 \frac{(z-O_x)^\perp}{|z-O_x|} \fint_{\partial B(O_x, |z-O_x|)} n\cdot 2\pi  \nabla \Delta^{-1} \chi_{\tilde B_x} d\sigma \\
&= \frac{(z-O_x)^\perp}{ |z-O_x|^2 } \int_{B(O_x, |z-O_x|)}  \chi_{\tilde B_x}(y) dy
\\ & =  \pi r_{x}^2 \frac{(z-O_x)^\perp}{|z-O_x|^2}.
\end{split}
\]
Differentiating this and noting that
\[
|x-O_x| = r_{x}+d(x)>r_x,
\]
yields
\begin{equation}
\label{d2uB}
|\nabla^2 u_{\tilde B_x}(x)| \leq
\frac{C}{r_{x}}.
\end{equation}
From the definitions of $\tilde v$ and $u_{\tilde B_x}$ we also have (with some $\tilde C<\infty$ and a new $C<\infty$)
\begin{equation}
|\nabla^2 \tv  (x)-\nabla^2 u_{\tilde B_x}(x)| \leq \underbrace{\int_{ \Rm^2 \setminus B_x} \frac{\tilde  C}{|x-y|^3}dy}_{\leq Cr_{x}^{-1}}  + \underbrace{ \int_{(\tilde\Omega\triangle \tilde B_x)\cap B_x} \frac{\tilde C}{|x-y|^3}dy }_{=: I}.
\label{eq:d2v_estimate}
\end{equation}
Finally, note that
\[
{\rm dist}(x,S_x)\geq \frac {d(x)}2.
\]
This holds because if
$P_x+\rho\nu\in  B(x, \frac 12{d(x)})$
with $|\nu|=1$ and $\rho\ge  0$, then
\[
\nu\cdot n_x\ge \cos \frac \pi 6>\frac 12
\qquad\text{and}\qquad
\rho\le \frac 32 d(x)<r_x
\]
(due to $d(x)\le \frac 14 r_x$), hence $P_x+\rho\nu\notin S_x$.
Also, if $|P_x-y| \geq 2d(x)$, then
\[
|P_x-y| \leq |x-y| + d(x) \leq 2 |x-y|.
\]
From these,  \eqref{eq:in_S}, and $|\theta|\le 2|\sin\theta|$ for  $|\theta|\le \frac\pi 2$
we now have
\[
\begin{split}
 I &\leq  \int_{S_x} \frac{\tilde C}{|x-y|^3}dy\\
 &\leq \int_{S_x\backslash B(P_x, 2d(x))}  \frac{\tilde C}{|x-y|^3}dy + \tilde C \left(\frac{d(x)}{2}\right)^{-3} \big|S_x\cap B(P_x, 2d(x))\big| \\
  &\leq \int_{S_x\backslash B(P_x, 2d(x))}  \frac{8\tilde C}{|P_x-y|^3}dy +  \frac{8 \tilde C}{d(x)^3} \big|S_x\cap B(P_x, 2d(x))\big| \\
 &\leq  \int_{2d(x)}^{r_{x}} \frac{8\tilde C}{\rho^{3}} 4 \left(\frac{\rho}{r_{x}}\right)^{\gamma} \rho \, d\rho +
 \frac{8 \tilde C}{d(x)^3} \int_0^{2d(x)} 4 \left(\frac{\rho}{r_{x}}\right)^{\gamma} \rho \, d\rho
\\& \leq C_\gamma d(x)^{-1+\gamma} r_{x}^{-\gamma}.
 \end{split}
 \]
This, \eqref{eq:d2v_estimate},  and \eqref{d2uB} now yield
\[
 |\nabla^2 \tv  (x)| \leq C_\gamma d(x)^{-1+\gamma} r_{x}^{-\gamma} + C r_x^{-1},
 \]
 so the result follows from  $d(x) \leq \frac{1}{4} r_{x}$.
 \end{proof}

\section{Finite time blow-up for small $\alpha>0$}\label{sec:sing}

In this section we prove Theorem \ref{main1234}, which is an immediate corollary of Theorem~\ref{T.6.1} below.

Let $\alpha\in(0,\frac 1{24})$ and $\epsilon>0$ be a small $\alpha$-dependent number, to be determined later.
Let $D^+:=\Rm^+\times\Rm^+$, $\Omega_1:=(\eps,4)\times(0,4)$, $\Omega_2:=(2\eps,3)\times(0,3)$, and
let $\Omega_0 \subseteq D^+$ be an open set whose boundary is a smooth simple closed curve and
which satisfies~$\Omega_2 \subseteq \Omega_0 \subseteq \Omega_1$.  Let $\omega$
be the  unique $H^3$ patch solution to \eqref{sqg}-\eqref{eq:velocity_law} with the initial data
\begin{equation}
\omega(\cdot,0) := \chi_{\Omega_0} - \chi_{\tilde{\Omega}_0}
\label{def:omega_0}
\end{equation}
and the maximal time of existence $T_\omega>0$.
Here, $\tilde{\Omega}_0$ is the reflection of $\Omega_0$ with respect to the $x_2$-axis.
Then oddness of $\omega_0$ in $x_1$ and the local uniqueness of the solution imply that
\begin{equation} \label{6.0}
\omega(\cdot, t) = \chi_{\Omega(t)} - \chi_{\tilde{\Omega}(t)}
\end{equation}
for $t\in[0,T_\omega)$, with $\Omega (t):=\Phi_t(\Omega_0)$  and $\tilde{\Omega}(t)$
the reflection of $\Omega(t)$ with respect to the~$x_2$-axis.  Note that $\Omega(t)$ is well-defined due to
Theorem~\ref{T.1.1bis}(a) and $H^3(\mathbb T)\subseteq C^{1,1}(\mathbb T)$).  We will show that $T_\omega<\infty$, that is, $\omega$ becomes singular in finite time.

More specifically, let
\begin{equation}\label {6.1}
T:=50(3\eps)^{2\alpha} \qquad\text{and}\qquad X(t):= \left[(3\eps)^{2\alpha}- \frac t{50} \right]^{1/2\alpha} \quad\text{for $t\in[0,T]$,}
\end{equation}
so that
\begin{equation}\label{aug2110}
X'(t)=-\frac 1{100\alpha} X(t)^{1-2\alpha},
\end{equation}
on $[0,T]$, with $X(0)=3\eps$ and $X(T)=0$, and let
\begin{equation}\label {6.2}
K(t):=\{ x\in D^+ \,:\, x_1\in(X(t),2) \text{ and } x_2\in(0,x_1)\}
\end{equation}
for $t\in[0,T]$.  We will show that if $T_\omega>T$, then $K(t)\subseteq \Omega(t)$ for all $t\in[0,T]$.  This yields a contradiction because then $\Omega(T)$ and $\tilde\Omega(T)$ touch at the origin (and thus they also cannot remain $H^3$).
\begin{figure}[htbp]
\begin{center}
\includegraphics[scale=1]{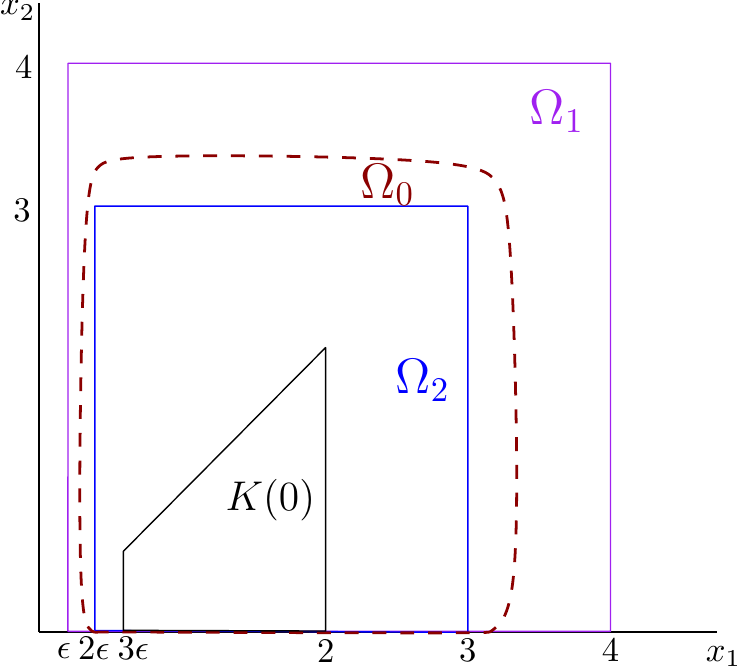}
\end{center}
\caption{The domains $\Omega_1, \Omega_2, \Omega_0$, and $K(0)$ (with $\omega_0=\chi_{\Omega_0}-\chi_{\tilde  \Omega_0}$). \label{fig:def_domains}}
\end{figure}

This result will, in fact, hold for the less regular $C^{1,\gamma}$ patches, but in this case we need to assume oddness of $\omega$ in $x_1$  (this is not immediate from the same property of $\omega_0$ without knowing local uniqueness in this class).  Before we can prove the result, however, we  need to obtain some estimates on
the velocity $u$, the most crucial of which is Proposition \ref{P.6.3}.

{\it Remark.} The fact that the fraction on the right-hand side of (\ref{aug2110}) blows-up as $\alpha\to 0$ may seem
worrying but   $\eps$ will go to zero quickly as $\alpha\to 0$ (and $X(t)\in[0,3\eps]$), so this growth will be
compensated by the term $X(t)^{1-2\alpha}$ which decays as $\alpha\to 0$.

\subsection{Some estimates on the velocity fields}
\label{sec:estimates}

Let us start with some basic estimates on the fluid velocities for a general $\omega$.

\begin{lemma}\label{lemma:uniform_u_bound}
For $\alpha\in(0,\frac 12)$ and $u(\cdot,t)$ as in  \eqref{eq:velocity_law} with $\omega(\cdot,t) \in L^1(D) \cap L^\infty(D)$, we have
\begin{equation}\label{uLinfty}
\|u(\cdot,t)\|_{L^\infty} \leq \frac{2\pi}{1-2\alpha} \|\omega(\cdot, t)\|_{L^\infty}+ 2\|\omega(\cdot,t)\|_{L^1}
\end{equation}
and
\begin{equation}\label{uHold}
\|u(\cdot,t)\|_{C^{1-2\alpha}} \leq  \frac {8\pi}{\alpha(1-2\alpha)} \|\omega(\cdot, t)\|_{L^\infty} + 2\|\omega(\cdot,t)\|_{L^1}.
\end{equation}
Furthermore, if $\omega$ is weak-$*$ continuous as an $L^\infty(D)$-valued function on a time interval~$[a,b]$,
and is supported inside a fixed compact subset of $\bar D$ for every $t\in [a,b]$, then $u$ is continuous on $\bar D \times [a,b]$.
\end{lemma}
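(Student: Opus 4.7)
The plan is to prove the three assertions in the order they appear, exploiting the natural odd reflection that turns \eqref{eq:velocity_law} into a convolution on $\Rm^2$.

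For the $L^\infty$ bound, I would let $\omega^*$ denote the odd extension of $\omega(\cdot,t)$ across $\partial D=\{x_2=0\}$, so that
\[
u(x,t)=\int_{\Rm^2} \frac{(x-y)^\perp}{|x-y|^{2+2\alpha}}\,\omega^*(y)\,dy
\]
with $\|\omega^*\|_{L^\infty}=\|\omega(\cdot,t)\|_{L^\infty}$ and $\|\omega^*\|_{L^1}=2\|\omega(\cdot,t)\|_{L^1}$. Splitting the integral at $|x-y|=1$, using polar coordinates on the inner part together with the $L^\infty$ bound gives $\int_{|x-y|<1}|x-y|^{-1-2\alpha}dy=\tfrac{2\pi}{1-2\alpha}$, while on the outer part the factor $|x-y|^{-1-2\alpha}\le 1$ pairs against $\|\omega^*\|_{L^1}$. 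This yields exactly \eqref{uLinfty}.

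For the Hölder bound \eqref{uHold}, I would only need to treat $\delta:=|x-x'|\le\tfrac12$, since otherwise the desired estimate is a consequence of \eqref{uLinfty}. Writing $K(x,y):=(x-y)^\perp|x-y|^{-2-2\alpha}-(x-\bar y)^\perp|x-\bar y|^{-2-2\alpha}$ I would split the integral $u(x,t)-u(x',t)=\int K(x,y)\omega(y,t)dy-\int K(x',y)\omega(y,t)dy$ into three regions: $|x-y|<2\delta$, $2\delta\le |x-y|<1$, and $|x-y|\ge 1$. On the inner region I control each term separately by the same polar-coordinate argument as in Part 1, obtaining a bound of order $\tfrac{\|\omega\|_{L^\infty}}{1-2\alpha}\delta^{1-2\alpha}$. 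On the middle region the pointwise estimate $|K(x,y)-K(x',y)|\le C\delta\,|x-y|^{-2-2\alpha}$ (valid whenever $|x-y|\ge 2\delta$, since then $|x-\bar y|,|x'-\bar y|\ge|x-y|/2$) combined with $\|\omega\|_{L^\infty}$ produces a factor $\int_{2\delta}^{1}r^{-1-2\alpha}dr\le \tfrac{1}{2\alpha}(2\delta)^{-2\alpha}$ and hence a bound of order $\tfrac{\|\omega\|_{L^\infty}}{\alpha}\delta^{1-2\alpha}$. On the outer region the same kernel-gradient bound together with $\|\omega\|_{L^1}$ contributes a term of order $\delta\le\delta^{1-2\alpha}$. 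Collecting the three contributions yields \eqref{uHold}. The main technical point here is the bookkeeping of the $\alpha$- and $(1-2\alpha)$-dependence so that the stated prefactor $\tfrac{8\pi}{\alpha(1-2\alpha)}$ is achieved; I expect this to be the most delicate step.

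For the continuity assertion, fix $(x,t)\in\bar D\times[a,b]$ and a sequence $(x_n,t_n)\to(x,t)$. I would decompose
\[
u(x_n,t_n)-u(x,t)=\bigl[u(x_n,t_n)-u(x,t_n)\bigr]+\bigl[u(x,t_n)-u(x,t)\bigr].
\]
The uniform compact-support hypothesis gives a uniform bound on $\|\omega(\cdot,s)\|_{L^1}+\|\omega(\cdot,s)\|_{L^\infty}$ for $s\in[a,b]$, so by Part 2 the first bracket is controlled by $C|x_n-x|^{1-2\alpha}\to 0$. For the second bracket I would freeze $x$ and, for each $\epsilon>0$, write the kernel as $K(x,\cdot)=K(x,\cdot)\chi_{|x-y|<\epsilon}+K(x,\cdot)\chi_{|x-y|\ge\epsilon}$. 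The contribution of the first piece is $O(\epsilon^{1-2\alpha})$ uniformly in $n$ by the same argument as in Part 1, while the second piece is a bounded, compactly supported (hence $L^1$) test function on $D$, so weak-$*$ convergence of $\omega(\cdot,t_n)$ to $\omega(\cdot,t)$ forces the corresponding integrals to converge. Sending $\epsilon\to 0$ afterwards completes the proof.
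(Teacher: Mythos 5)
Your proposal follows essentially the same route as the paper: odd reflection turning \eqref{eq:velocity_law} into a convolution on $\Rm^2$, a near/far splitting with the $L^\infty$ norm near the singularity and a mean-value (kernel-gradient) estimate away from it, and weak-$*$ pairing against an $L^1$ kernel for the continuity claim; the $L^\infty$ bound and the continuity argument match the paper's almost verbatim. The one place you deviate is in the H\"older estimate, where you pair the far field $|x-y|\ge 1$ against $\|\omega(\cdot,t)\|_{L^1}$: this adds a further multiple of $\|\omega(\cdot,t)\|_{L^1}$ on top of the coefficient $2$ appearing in \eqref{uHold}, and that $2$ is exactly the contribution of the sup-norm bound \eqref{uLinfty}, so your bookkeeping as described cannot land on the stated constant. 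The paper avoids this by controlling the entire region $|x-y|\ge 2\delta$ with $\|\omega\|_{L^\infty}$ alone, using that the differentiated kernel is integrable at infinity in two dimensions precisely because $\alpha>0$, namely $\int_{2\delta}^\infty s^{-1-2\alpha}\,ds=(2\alpha)^{-1}(2\delta)^{-2\alpha}<\infty$; with that adjustment no splitting at $|x-y|=1$ is needed, the $\|\omega\|_{L^1}$ coefficient stays at $2$, and the prefactor $\tfrac{8\pi}{\alpha(1-2\alpha)}$ comes out. Since the precise constant in \eqref{uHold} is never used sharply elsewhere in the paper, this is a cosmetic rather than substantive difference.
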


\begin{proof}
Let  $\eta:\mathbb{R}^2\to\mathbb{R}$ be the odd extension of $\omega(\cdot,t)$ to
the whole plane.
The Bio-Savart law \eqref{eq:velocity_law} for $x\in D$  then becomes
\begin{equation}\label{genulaw}
u(x,t) =  \int_{\mathbb{R}^2} \frac{(x-y)^\perp}{|x-y|^{2+2\alpha}} \eta(y) dy,
\end{equation}
and \eqref{uLinfty} follows from
\begin{equation*}
\begin{split}
|u(x,t)| &\leq  \int_{|x-y|\leq 1} \frac{|\eta(y)|}{|x-y|^{1+2\alpha}}dy
+ \int_{|x-y|> 1} \frac{|\eta(y)|}{|x-y|^{1+2\alpha}} dy\\
&\leq  \|\eta\|_{L^\infty} \int_{|x-y|\leq 1}
\frac{1}{|x-y|^{1+2\alpha}}  dy +
\|\eta\|_{L^1}
\\ & \leq \frac{2\pi}{1-2\alpha} \|\omega(\cdot,t)\|_{L^\infty}+ 2\|\omega(\cdot,t)\|_{L^1}.
\end{split}
\end{equation*}
To prove \eqref{uHold}, consider any $x,z \in \bar D$ with $r:=|x-z|$.
Then
\begin{equation*}
\begin{split}
|u(x,t)-u(z,t)| \leq & \int_{B(x,2r)} \frac{1}{|x-y|^{1+2\alpha}} \eta(y)\,dy + \int_{B(x,2r)} \frac{1}{|z-y|^{1+2\alpha}} \eta(y)\,dy \\
& + \int_{\Rm^2 \setminus B(x,2r)} \left| \frac{(x-y)^\perp}{|x-y|^{2+2\alpha}} - \frac{(z-y)^\perp}{|z-y|^{2+2\alpha}} \right| \eta(y)\,dy  \\
\le & 4\pi\|\eta\|_{L^\infty} \int_0^{3r} s^{-2\alpha}\,ds + 32\|\eta\|_{L^\infty}\int_{2r}^\infty r s^{-1-2\alpha}\,ds \\
\leq & \left( \frac {12\pi}{1-2\alpha}+\frac {32}{2\alpha} \right) \|\eta\|_{L^\infty} |x-z|^{1-2\alpha}.
\end{split}
\end{equation*}
Combining this with \eqref{uLinfty} yields \eqref{uHold}.


It remains to prove the last claim.  Since the kernel in \eqref{genulaw} is $L^1$ on any compact subset of $\bar D$, the assumptions show that $u$ is continuous in $t\in [a,b]$  for any fixed $x\in\bar D$.  The claim now follows from uniform continuity of $u$ in $x\in\bar D$, see \eqref{uHold}.
\end{proof}

For $y=(y_1,y_2)\in \bar D^+ = \Rm^+\times\Rm^+$, we denote $\bar y:=(y_1,-y_2)$
and $\tilde y:=(-y_1,y_2).$
If~$\omega(\cdot, t)\in L^\infty (D)$ is odd in $x_1$, then \eqref{eq:velocity_law} becomes (we drop $t$ from the notation in this sub-section)
\begin{align}
u_1(x) &= -\int_{D^+} K_1(x,y) \omega(y) dy, \label{def:u1}\\
u_2(x) &= \int_{D^+} K_2(x,y) \omega(y) dy, \notag
\end{align}
where
\begin{equation}
K_1(x,y) =
\underbrace{\frac{y_2-x_2}{|x-y|^{2+2\alpha}}}_{K_{11}(x,y)} -
\underbrace{\frac{y_2-x_2}{|x-\tilde y|^{2+2\alpha}}}_{K_{12}(x,y)} -
 \underbrace{\frac{y_2+x_2}{|x+y|^{2+2\alpha}}}_{K_{13}(x,y)} +
\underbrace{\frac{y_2+x_2}{|x-\bar y|^{2+2\alpha}}}_{K_{14}(x,y)},
\label{def:K1}
\end{equation}
\begin{equation}
K_2(x,y) =
\underbrace{\frac{y_1-x_1}{|x-y|^{2+2\alpha}}}_{K_{21}(x,y)} +
\underbrace{\frac{y_1+x_1}{|x-\tilde y|^{2+2\alpha}}}_{K_{22}(x,y)} -
\underbrace{\frac{y_1+x_1}{|x+y|^{2+2\alpha}}}_{K_{23}(x,y)} -
\underbrace{\frac{y_1-x_1}{|x-\bar y|^{2+2\alpha}}}_{K_{24}(x,y)}.
\label{def:K2}
\end{equation}
Let us start
with some simple observations about $K_1$ and $K_2$.
\begin{lemma}
For $\alpha\in(0,\frac 12)$ and  $x,y\in D^+$ we have the following:
\\[0.2cm]
(a) $K_1(x,y) \geq K_{11}(x,y) - K_{12}(x,y)$.
\\[0.1cm]
(b) ${\rm sgn}(y_2-x_2) (K_{11}(x,y) - K_{12}(x,y))\ge 0$.
\\[0.1cm]
(c) $K_2(x,y) \geq K_{21}(x,y) - K_{24}(x,y)$.
\\[0.1cm]
(d) ${\rm sgn}(y_1-x_1) (K_{21}(x,y) - K_{24}(x,y)) \geq 0$.
\label{lemma:observation_u1}
\end{lemma}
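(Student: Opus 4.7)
The plan is to reduce each of the four claims to a simple distance comparison between the four reflections of $y$, namely $y$, $\bar y = (y_1,-y_2)$, $\tilde y = (-y_1,y_2)$, and $-y = (-y_1,-y_2)$. Because each of the kernels $|x-\cdot|^{-2-2\alpha}$ is a decreasing function of distance, any ordering of these four distances translates directly into an ordering of the corresponding summands in \eqref{def:K1}--\eqref{def:K2}, and the factorizations below reduce (a)--(d) to exactly such orderings.

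The main geometric input is the following: for $x,y\in D^+$ (so that $x_1,x_2,y_1,y_2\ge 0$), expanding squares yields
\[
|x-\tilde y|^2-|x-y|^2 \,=\, |x+y|^2-|x-\bar y|^2 \,=\, 4x_1y_1 \,\ge\, 0,
\]
\[
|x-\bar y|^2-|x-y|^2 \,=\, |x+y|^2-|x-\tilde y|^2 \,=\, 4x_2y_2 \,\ge\, 0,
\]
so one has the chain
\[
|x-y|\,\le\,\min\bigl\{|x-\bar y|,|x-\tilde y|\bigr\}\,\le\,\max\bigl\{|x-\bar y|,|x-\tilde y|\bigr\}\,\le\,|x+y|.
\]
Geometrically, reflecting $y$ across either coordinate axis (while $x$ stays in $D^+$) can only increase the distance to $x$, and reflecting across both axes increases it further still.

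With this in hand, each of (a)--(d) is immediate. For (a), I would write
\[
K_1(x,y)-\bigl(K_{11}(x,y)-K_{12}(x,y)\bigr)=K_{14}(x,y)-K_{13}(x,y)=(y_2+x_2)\Bigl(|x-\bar y|^{-2-2\alpha}-|x+y|^{-2-2\alpha}\Bigr),
\]
which is nonnegative because $y_2+x_2\ge 0$ and $|x-\bar y|\le|x+y|$. For (b), factor
\[
K_{11}(x,y)-K_{12}(x,y)=(y_2-x_2)\Bigl(|x-y|^{-2-2\alpha}-|x-\tilde y|^{-2-2\alpha}\Bigr);
\]
since $|x-y|\le|x-\tilde y|$, the parenthesized factor is $\ge 0$, so multiplying by $\mathrm{sgn}(y_2-x_2)$ produces $|y_2-x_2|$ times a nonnegative quantity. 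Part (c) follows from $K_{22}-K_{23}=(y_1+x_1)(|x-\tilde y|^{-2-2\alpha}-|x+y|^{-2-2\alpha})\ge 0$, and part (d) from $K_{21}-K_{24}=(y_1-x_1)(|x-y|^{-2-2\alpha}-|x-\bar y|^{-2-2\alpha})$ combined with $|x-y|\le|x-\bar y|$. There is no genuine obstacle here; the argument is pure bookkeeping, and the only thing to keep straight is pairing each summand of $K_1$ (resp.\ $K_2$) with the reflection of $y$ whose corresponding $(y_2\pm x_2)$ (resp.\ $(y_1\pm x_1)$) prefactor has the sign needed to match the direction of the distance inequality.
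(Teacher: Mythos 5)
Your proof is correct and follows exactly the same route as the paper: part (a) reduces to $|x-\bar y|\le|x+y|$, part (b) to $|x-y|\le|x-\tilde y|$, and (c), (d) follow by exchanging the roles of $\bar y$ and $\tilde y$. You have merely written out the distance comparisons and factorizations that the paper leaves as ``immediate.''
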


\begin{proof}
Part (a) is immediate from $|x-\bar y|\le |x+y|$ and (b) from $|x-y| \leq |x-\tilde y|$.
Exchanging $\bar y$ and $\tilde y$ yields the proofs of (c) and (d).
%
%
%
\end{proof}

Our goal will be to show that if the solution with the initial data from \eqref{def:omega_0} exists globally ,
in which case $0\le\omega\le 1$ on $D^+$ by symmetry, then the patch $\Omega(t)$ and its reflection across the $x_2$
axis must touch at the origin in finite time, which is a contradiction. In particular,  we will need to show that $u_1$
is sufficiently negative in an appropriate subset of $D^+$ (at least for some time).   We will do this by separately estimating the ``bad'' part
\[
u_{1}^{bad}(x) := - \int_{\Rm^+\times(0,x_2)} K_1(x,y) \omega(y) dy
\]
 of the integral in \eqref{def:u1}, (where $K_{11}-K_{12}<0$) and the ``good'' part
 \[
u_{1}^{good}(x) := - \int_{\Rm^+\times(x_2,\infty)} K_1(x,y) \omega(y) dy
\]
(where $K_{11}-K_{12}\ge 0$).   We will also obtain similar estimates for the $u_2$ analogs
\[
u_{2}^{bad}(x) := \int_{(0,x_1)\times \Rm^+} K_2(x,y) \omega(y) dy,
\]
\[
u_{2}^{good}(x) := \int_{(x_1,\infty)\times \Rm^+} K_2(x,y) \omega(y) dy.
\]


\begin{lemma}
Let $\alpha\in(0,\frac 12)$ and assume that $\omega$ is odd in $x_1$ and $0\le \omega\le 1$ on $D^+$.
\\[0.2cm]
(a) If $x\in \overline{D^+}$ and $x_2\leq x_1$, then
\[
u_1^{bad}(x) \leq \frac{1}{\alpha}\left(\frac{1}{1-2\alpha} -2^{-\alpha}\right) x_1^{1-2\alpha}.
\]
\\[0.1cm]
(b) If $x\in \overline{D^+}$ and $x_2\geq x_1$, then
\[
u_2^{bad}(x) \geq - \frac{1}{\alpha}\left(\frac{1}{1-2\alpha} -2^{-\alpha}\right) x_2^{1-2\alpha}.
\]
\label{lemma:bad_parts_u1}
\end{lemma}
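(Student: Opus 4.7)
Both bounds follow the same template. First I will apply parts (a)/(c) of Lemma \ref{lemma:observation_u1} to replace the full kernel by the two-term combination $K_{11}-K_{12}$ (resp.\ $K_{21}-K_{24}$); then I will use parts (b)/(d) of the same lemma, together with $0\le\omega\le 1$, to drop $\omega$ in favor of the worst-case constant $1$; and finally I will evaluate the resulting deterministic integral explicitly and bound it using the hypothesis $x_2\le x_1$ (resp.\ $x_1\le x_2$).

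For (a), these reductions leave me needing the pointwise bound
\[
I(x):=\int_{\Rm^+\times(0,x_2)}\bigl(K_{12}-K_{11}\bigr)(x,y)\,dy \,\le\, \frac{1}{\alpha}\left(\frac{1}{1-2\alpha}-2^{-\alpha}\right) x_1^{1-2\alpha}.
\]
Substituting $b:=x_2-y_2$, the integrand becomes $b\bigl[((x_1-y_1)^2+b^2)^{-(1+\alpha)}-((x_1+y_1)^2+b^2)^{-(1+\alpha)}\bigr]$, and the $b$-integral is explicit: for $A>0$,
\[
\int_0^{x_2}\frac{b\,db}{(A^2+b^2)^{1+\alpha}}=\frac{1}{2\alpha}\bigl[A^{-2\alpha}-(A^2+x_2^2)^{-\alpha}\bigr].
\]
This leaves a $y_1$-integral over $\Rm^+$ of a four-term combination in which the individual terms diverge. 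I will handle the two differences jointly via the shifts $u=y_1\mp x_1$:
\[
\int_0^\infty\bigl(|x_1-y_1|^{-2\alpha}-|x_1+y_1|^{-2\alpha}\bigr)dy_1=\frac{2x_1^{1-2\alpha}}{1-2\alpha},
\]
while the second pair telescopes to $-\int_{-x_1}^{x_1}(u^2+x_2^2)^{-\alpha}du$. Combining these gives an exact identity for $I(x)$, and the hypothesis $x_2\le x_1$ forces $(u^2+x_2^2)^{-\alpha}\ge(2x_1^2)^{-\alpha}$ on $(0,x_1)$, so the subtracted integral is at least $2^{-\alpha}x_1^{1-2\alpha}$, producing the stated constant.

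For (b), the same recipe applies with the two coordinates interchanged. Lemma \ref{lemma:observation_u1}(c),(d) together with $0\le\omega\le 1$ reduces the claim to showing $\int_{(0,x_1)\times\Rm^+}(K_{21}-K_{24})(x,y)dy\ge -\frac{1}{\alpha}(\frac{1}{1-2\alpha}-2^{-\alpha})x_2^{1-2\alpha}$, and the analogous computation with the roles of $x_1$ and $x_2$ swapped throughout (now using $x_1\le x_2$) yields exactly this.

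\textbf{Main obstacle.} The subtlety lies in the intermediate $y_1$-integral: after the explicit $b$-integration, each of the four terms is individually \emph{non-integrable} on $\Rm^+$, and the first pair even has a non-integrable singularity at $y_1=x_1$ when viewed separately. Only the two telescoping differences are finite, so the computation must keep those differences together throughout, e.g.\ by introducing a cutoff $y_1\le R$ and passing to the limit $R\to\infty$ only after the cancellations have been carried out. A small secondary check is that the directions of the inequalities in Lemma \ref{lemma:observation_u1}(a,c) and the signs provided by (b,d) align correctly to give an upper bound for $u_1^{bad}$ and a lower bound for $u_2^{bad}$ once $\omega$ is replaced by $1$.
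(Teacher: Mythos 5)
Your proposal is correct and takes essentially the same route as the paper: the same reduction via Lemma \ref{lemma:observation_u1}(a,b) (resp.\ (c,d)) and $0\le\omega\le1$, the same translation cancellation between the $|x-y|$ and $|x-\tilde y|$ kernels, and the same final estimate from $x_2\le x_1$ — indeed your exact identity for $I(x)$ coincides with the paper's $\frac1\alpha\int_0^{x_1}\bigl(z_1^{-2\alpha}-(z_1^2+x_2^2)^{-\alpha}\bigr)dz_1$. The only difference is the order of operations: the paper performs the shift $y_1\mapsto y_1+2x_1$ at the level of the two-dimensional integral, collapsing everything to an absolutely convergent integral over the bounded strip $(0,2x_1)\times(0,x_2)$, which sidesteps the divergent-term bookkeeping (cutoff in $R$) that you flag as the main obstacle; also note that the singularity at $y_1=x_1$ you mention is in fact integrable since $2\alpha<1$, and the symmetric integral $\int_{-x_1}^{x_1}(u^2+x_2^2)^{-\alpha}\,du$ is at least $2^{1-\alpha}x_1^{1-2\alpha}$ (twice your stated bound), which is what yields the stated constant.
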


\begin{proof}
(a) As $0\le\omega\le 1$ on $D^+$, it follows from Lemma \ref{lemma:observation_u1}(a,b)   that
\begin{equation*}
\begin{split}
u_1^{bad}(x)
&\le  -\int_{\Rm^+\times(0,x_2)} \left(\frac{y_2-x_2}{|x-y|^{2+2\alpha}} - \frac{y_2-x_2}{|x-\tilde y|^{2+2\alpha}}\right) \omega(y) dy\\
&\le  -\int_{\Rm^+\times(0,x_2)} \left(\frac{y_2-x_2}{|x-y|^{2+2\alpha}} - \frac{y_2-x_2}{|x-\tilde y|^{2+2\alpha}}\right)dy\\
&= -\int_{(0,2x_1)\times(0,x_2)} \frac{y_2-x_2}{|x-y|^{2+2\alpha}}  dy.
\end{split}
\end{equation*}
The equality holds due to identity
\[
\int_{\Rm^+\times(0,x_2)} \frac{y_2-x_2}{|x-\tilde y|^{2+2\alpha}} dy
= \int_{(2x_1,\infty)\times(0,x_2)} \frac{y_2-x_2}{|x-y|^{2+2\alpha}}dy,
\]
that can be verified by a change of variables $y_1\mapsto y_1+2x_1$.
Now, the change of variables~$z:=x-y$, symmetry, together with the assumption $x_2\le x_1$, yield
\begin{equation}
\begin{split}
u_1^{bad} (x)
&\le  2\int_{(0,x_1)\times (0,x_2)} \frac{z_2}{(z_1^2 + z_2^2)^{1+\alpha}}dz
\\ &= \frac{1}{\alpha} \int_0^{x_1}\left( \frac{1}{z_1^{2\alpha}} -
  \frac{1}{(z_1^2 + x_2^2)^\alpha} \right)dz_1\\
&\leq  \frac{1}{\alpha(1-2\alpha)} x_1^{1-2\alpha} - \frac{1}{\alpha} \int_0^{x_1}  \frac{1}{(2 x_1^2)^\alpha} dz_1
\\ & =  \frac{1}{\alpha}\left(\frac{1}{1-2\alpha} -2^{-\alpha}\right) x_1^{1-2\alpha}.
\end{split}
\end{equation}
The proof of part (b) is analogous to (a).
\end{proof}

%

In the estimate of the ``good'' parts of $u_1,u_2$ we will in addition assume that for some~$x\in D^+$ we have $\omega=1$ on the triangle
\begin{equation}
A(x) := \left\{ y\,:\,  y_1\in \left(x_1,x_1+1 \right) \text{ and } y_2 \in(x_2, x_2 + y_1 - x_1) \right\},
\label{eq:def_A}
\end{equation}
which is depicted in Figure \ref{fig:def_A}.  This assumption will feature in the proof of the comparison-principle-type result $K(t)\subseteq \Omega(t)$ (mentioned above) in the next sub-section.

\begin{figure}[htbp]
\begin{center}
\includegraphics[scale=1]{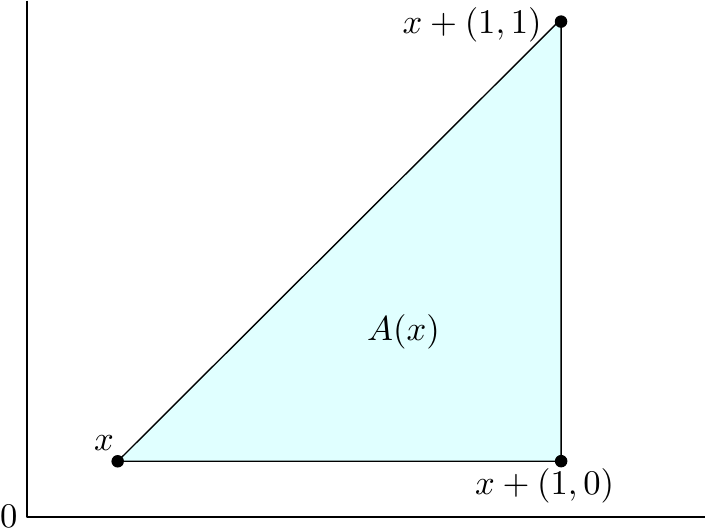}
\end{center}
\caption{The domain $A(x)$. \label{fig:def_A}}
\end{figure}

\begin{lemma}
\label{lemma:u1_good}
Let $\alpha\in(0,\frac 12)$ and assume that $\omega$ is odd in $x_1$ and for some $x\in  \overline{D^+}$
we have $\omega\ge \chi_{A(x)}$ on $D^+$, with $A(x)$ from \eqref{eq:def_A}.
There exists $\delta_\alpha\in(0,1)$, depending only on $\alpha$, such that the following hold.
\\[0.2cm]
(a) If $x_1\le \delta_\alpha$, then
\begin{equation*}
u_1^{good}(x) \leq -\frac{1}{6\cdot 20^\alpha \alpha}x_1^{1-2\alpha}.
\end{equation*}
\\[0.1cm]
(b) If $x_2\le \delta_\alpha$, then
\begin{equation*}
u_2^{good}(x) \geq \frac{1}{5\cdot 8^\alpha \alpha}x_2^{1-2\alpha}.
\end{equation*}
\end{lemma}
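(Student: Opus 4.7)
The plan is to estimate $u_1^{good}$ and $u_2^{good}$ from below in absolute value using the sign structure of the kernels $K_1,K_2$ in the relevant half-planes and the pointwise lower bound $\omega\ge\chi_{A(x)}$ which, importantly, also gives $\omega\ge 0$ on all of $D^+$.

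For part (a), I would begin by observing that in the integration region $\{y_2>x_2\}$ entering $u_1^{good}$, Lemma~\ref{lemma:observation_u1}(a,b) yields $K_1(x,y)\ge K_{11}(x,y)-K_{12}(x,y)\ge 0$. Since $\omega\ge 0$ on $D^+$ and $A(x)\subseteq\Rm^+\times(x_2,\infty)$, dropping all contributions outside $A(x)$ is legitimate and gives
\begin{equation*}
u_1^{good}(x)\le -\int_{A(x)}(K_{11}-K_{12})(x,y)\,dy.
\end{equation*}
With the change of variables $z:=y-x$, the region $A(x)$ becomes $\{z_1\in(0,1),\,z_2\in(0,z_1)\}$ and the integrand equals $z_2\bigl[(z_1^2+z_2^2)^{-1-\alpha}-((z_1+2x_1)^2+z_2^2)^{-1-\alpha}\bigr]$. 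The inner $z_2$-integral can be done in closed form, producing two bracketed expressions of the form $z_1^{-2\alpha}-(z_1+2x_1)^{-2\alpha}$ and $(2z_1^2)^{-\alpha}-((z_1+2x_1)^2+z_1^2)^{-\alpha}$, each divided by $2\alpha$.

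I would then split the $z_1$-integration into $(0,2x_1)$, $(2x_1,1)$, and $(1,1+2x_1)$ and use the telescoping substitution $z_1\mapsto z_1+2x_1$ in the second kernel. On $(0,2x_1)$ the integrand is essentially $K_{11}$, contributing $\frac{1-2^{-\alpha}}{2\alpha(1-2\alpha)}(2x_1)^{1-2\alpha}$. On $(2x_1,1)$ the telescoping produces an integral of $z_2\,(z_1^2+z_2^2)^{-1-\alpha}$ over a thin strip of width $2x_1$, which is of order $x_1\int_{2x_1}^1 z_1^{-1-2\alpha}\,dz_1\sim x_1^{1-2\alpha}$. The boundary piece on $(1,1+2x_1)$ is $O(x_1)$ and hence negligible for small $x_1$. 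Collecting these contributions, and choosing $\delta_\alpha$ small enough so that the $O(x_1)$ terms can be absorbed, yields the claimed bound with the explicit prefactor $\frac{1}{6\cdot 20^\alpha\alpha}$.

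Part~(b) is carried out by the completely symmetric argument: on $\{y_1>x_1\}$, Lemma~\ref{lemma:observation_u1}(c,d) gives $K_2(x,y)\ge K_{21}(x,y)-K_{24}(x,y)\ge 0$, so the same reduction produces $u_2^{good}(x)\ge \int_{A(x)}(K_{21}-K_{24})(x,y)\,dy$, and the analogous change of variables $z=y-x$ (now with the roles of $z_1,z_2$ swapped relative to part (a), since $A(x)$ is a triangle whose short leg is horizontal) reduces the computation to the same type of integral, yielding the constant $\frac{1}{5\cdot 8^\alpha\alpha}$. The main obstacle is purely computational: verifying that after the splitting the explicit constants work out, and choosing $\delta_\alpha$ small enough (depending on $\alpha$) that all lower-order terms can be absorbed into the stated leading constants. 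No new conceptual ingredient beyond Lemma~\ref{lemma:observation_u1} is needed.
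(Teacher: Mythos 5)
Your proposal follows essentially the same route as the paper: reduce to $K_{11}-K_{12}$ (resp.\ $K_{21}-K_{24}$) on $A(x)$ via Lemma~\ref{lemma:observation_u1}, translate by $2x_1e_1$ (resp.\ $2x_2e_2$), and extract the main $\frac{1}{\alpha}x_1^{1-2\alpha}$ contribution from the leftover slanted strip along the hypotenuse of $A(x)$ while bounding the far-edge strip by $O(x_1)$. The only real difference is that you perform the inner integral in closed form before telescoping, whereas the paper telescopes the two-dimensional regions directly (its $A_1$ versus $A_2$, and $B_1$ versus $B_2$ after an extra shear in part (b)); the constants do work out as you anticipate.
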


\begin{proof}
(a) Using Lemma \ref{lemma:observation_u1}(a) and then changing variables $y_1\mapsto y_1+2x_1$, we obtain
\begin{equation}
\begin{split}
u_1^{good}(x) &\leq - \int_{A(x)}
\left(\frac{y_2-x_2}{|x-y|^{2+2\alpha}} -  \frac{y_2-x_2}{|x-\tilde y|^{2+2\alpha}}\right) dy \\
&= -\int_{A(x)} \frac{y_2-x_2}{|x-y|^{2+2\alpha}} dy +
\int_{A(x)+2x_1e_1} \frac{y_2-x_2}{|x-y|^{2+2\alpha}} dy,
\end{split}
\end{equation}
with $e_1:=(1,0)$.
Since the last two integrands are the same, after a cancellation due to the opposite signs we obtain
\[
u_1^{good}(x) \leq -\underbrace{\int_{A_1}\frac{y_2-x_2}{|x-y|^{2+2\alpha}}  dy}_{T_1}
+ \underbrace{ \int_{A_2}\frac{y_2-x_2}{|x-y|^{2+2\alpha}}  dy}_{T_2},
\]
with the domains
\begin{equation*}
\begin{split}
A_1 &:= \left\{ y \,:\, y_2 \in \left(x_2, x_2 + 1 \right) \text{ and } y_1\in \left(x_1+y_2-x_2, 3x_1+y_2-x_2 \right) \right\},\\
A_2 &:=  \left(x_1+1, 3x_1+1\right) \times \left(x_2, x_2 + 1 \right)
\end{split}
\end{equation*}
illustrated in Figure \ref{fig:def_A1_A2}.
\begin{figure}[htbp]
\begin{center}
\includegraphics[scale=1]{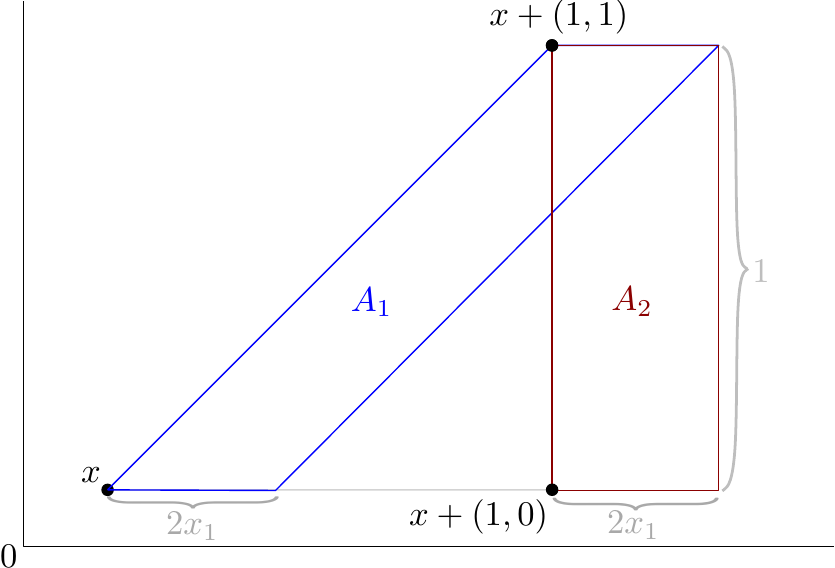}
\caption{The domains $A_1$ and $A_2$. \label{fig:def_A1_A2}}
\end{center}
\end{figure}
Since for $y\in A_2$ we have  $y_2-x_2\le 1 \le |x-y|$, we obtain
\[
T_2 \leq  |A_2|  =2x_1.
\]
To control $T_1$, we first note that its integrand is positive, so we can get a lower bound on~$T_1$ by only integrating
over $A_1':=A_1 \cap [\Rm\times(x_2+ 2x_1,\infty)]$. For $y\in A_1'$ we have
\[
y_2-x_2\ge \frac{1}{2}(y_1-x_1),
\]
which yields
\[
\sqrt{5}(y_2-x_2) \geq |x-y|.
\]
This gives
\begin{equation}
\begin{split}
T_1 &\geq
5^{-1-\alpha}  \int_{A_1'}  (y_2-x_2)^{-(1+2\alpha)}  dy
\\ &=   5^{-1-\alpha} 2x_1 \int_{x_2+2x_1}^{x_2+1}
(y_2-x_2)^{-(1+2\alpha)}  dy_2 \\
&=   \frac{1}{5^{1+\alpha}\alpha} x_1 [(2x_1)^{-2\alpha} - 1].
\end{split}
\end{equation}
Putting the estimates for $T_1$ and $T_2$ together yields
\begin{equation}
u_1^{good}(x) \leq
- \left[ \frac{1}{5\cdot 20^\alpha \alpha} -  \left( \frac{1}{5^{1+\alpha} \alpha} + 2\right) x_1^{2\alpha}  \right] x_1^{1-2\alpha}.
\end{equation}
The result now follows for some small enough $\delta_\alpha>0$.

(b) Using Lemma \ref{lemma:observation_u1}(c) and then the change of variables $y_2\mapsto y_2+2x_2$, we obtain
\begin{equation}
\begin{split}
u_2^{good}(x) &\geq  \int_{A(x)}
\left(\frac{y_1-x_1}{|x-y|^{2+2\alpha}} - \frac{y_1-x_1}{|x-\bar y|^{2+2\alpha}}\right) dy \\
&= \int_{A(x)} \frac{y_1-x_1}{|x-y|^{2+2\alpha}} dy -
\int_{A(x)+2x_2e_2} \frac{y_1-x_1}{|x-y|^{2+2\alpha}} dy,
\end{split}
\end{equation}
with $e_2:=(0,1)$.
Since the last two integrands are the same, after a cancellation due to the opposite signs we obtain
\[
u_2^{good}(x) \geq \int_{B_1} \frac{y_1-x_1}{|x-y|^{2+2\alpha}}  dy
- \int_{B_2} \frac{y_1-x_1}{|x-y|^{2+2\alpha}}  dy,
\]
with the domains
\begin{equation*}
\begin{split}
B_1 &:= \left(x_1, x_1 + 1 \right) \times \left(x_2, 3x_2\right), \\
B_2 &:= \left\{ y \,:\, y_1 \in \left(x_1, x_1 + 1 \right) \text{ and } y_2\in \left(x_2+y_1-x_1, 3x_2+y_1-x_1 \right) \right\}
\end{split}
\end{equation*}
illustrated in Figure \ref{fig:def_B1_B2}.
\begin{figure}[htbp]
\begin{center}
\includegraphics[scale=1]{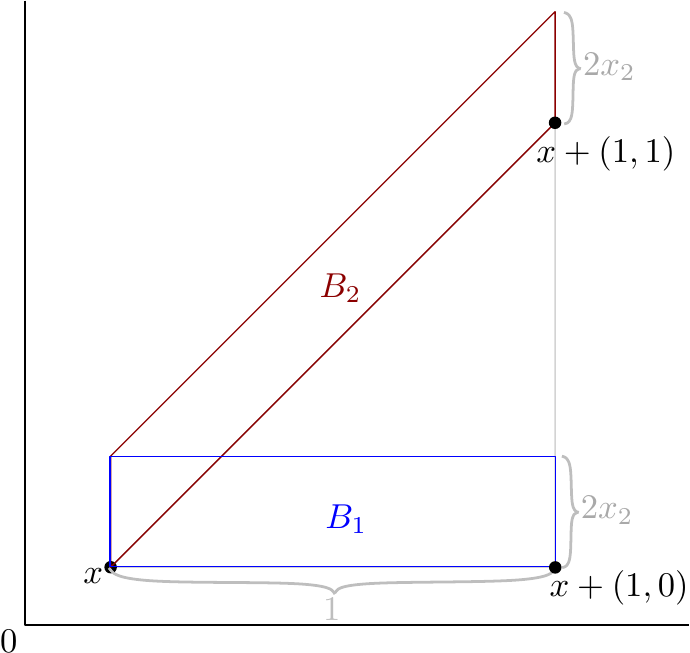}
\caption{The domains $B_1$ and $B_2$. \label{fig:def_B1_B2}}
\end{center}
\end{figure}
The change of variables $y_2\mapsto y_2-(y_1-x_1)$ in the second integral then yields
\[
u_2^{good}(x) \geq \int_{B_1} \left( \frac{y_1-x_1}{|x-y|^{2+2\alpha}} - \frac{y_1-x_1}{|x-(y_1,y_2+y_1-x_1)|^{2+2\alpha}} \right) dy.
\]
Since the integrand is positive, and for $y\in (x_1+2x_2, x_1 + 1 ) \times (x_2, 3x_2)$ we have
\[
|x-(y_1,y_2+y_1-x_1)|^2 =2|x-y|^2+(y_2-x_2)[2(y_1-x_1)-(y_2-x_2)] > 2|x-y|^2
\]
due to $y_1-x_1> 2x_2>y_2-x_2>0$, it follows that
\[
u_2^{good}(x) \geq \left (1-2^{-1-\alpha}\right) \int_{(x_1+2x_2, x_1 + 1 ) \times (x_2, 3x_2)}  \frac{y_1-x_1}{|x-y|^{2+2\alpha}}  dy.
\]
On this domain of integration we have $y_1-x_1\ge \frac 1{\sqrt 2} |x-y|$, so
\begin{equation}
\begin{split}
u_2^{good}(x)
&\geq  2^{-2-2\alpha} \int_{(x_1+2x_2, x_1 + 1 ) \times (x_2, 3x_2)} (y_1-x_1)^{-1-2\alpha} dy \\
&= 2^{-1-\alpha} x_2 \int_{x_1+2x_2}^{x_1+1} (y_1-x_1)^{-1-2\alpha} dy_1  = \frac{1}{2^{2+\alpha} \alpha}  x_2 [(2x_2)^{-2\alpha} - 1] \\
&=  \left[ \frac{1}{4\cdot 8^{\alpha} \alpha} - \frac{1}{2^{2+\alpha} \alpha} x_2^{2\alpha} \right] x_2^{1-2\alpha}.
\end{split}
\end{equation}
The result now follows for some small enough $\delta_\alpha>0$.
\end{proof}

The last two lemmas combine to the following result for small $\alpha$.

\begin{proposition}
\label{P.6.3}
Let $\alpha\in(0,\frac 1{24})$ and assume that $\omega$ is odd in $x_1$ and for some $x\in  \overline{D^+}$ we have $\chi_{A(x)}\le \omega\le 1$ on $D^+$, with $A(x)$ from \eqref{eq:def_A}.
Then there exists $\delta_\alpha\in(0,1)$, depending only on $\alpha$, such that the following hold.
\\[0.2cm]
(a) If $x_2\le x_1\le \delta_\alpha$, then
\begin{equation*}
u_1(x) \leq -\frac{1}{50 \alpha}x_1^{1-2\alpha}.
\end{equation*}
\\[0.1cm]
(b) If $x_1\le x_2\le \delta_\alpha$, then
\begin{equation*}
u_2(x) \geq \frac{1}{50 \alpha}x_2^{1-2\alpha}.
\end{equation*}
\end{proposition}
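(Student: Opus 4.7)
The plan is to prove both parts of Proposition~\ref{P.6.3} by simply decomposing $u_i = u_i^{bad} + u_i^{good}$ and adding the bounds furnished by Lemma~\ref{lemma:bad_parts_u1} and Lemma~\ref{lemma:u1_good}. All the integral analysis has already been done; what remains is a constant-chasing check that the good part dominates the bad part by a definite margin once $\alpha < \tfrac{1}{24}$. Concretely, I would take $\delta_\alpha$ to be the minimum of the two $\delta_\alpha$'s supplied by Lemma~\ref{lemma:u1_good}.

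For part~(a), under $x_2 \le x_1 \le \delta_\alpha$ both lemmas apply and give
\[
u_1(x) = u_1^{good}(x) + u_1^{bad}(x) \le \frac{x_1^{1-2\alpha}}{\alpha}\left[\frac{1}{1-2\alpha} - 2^{-\alpha} - \frac{1}{6\cdot 20^\alpha}\right].
\]
So it suffices to verify that for $\alpha \in (0,\tfrac{1}{24})$,
\[
F_1(\alpha) := \frac{1}{6\cdot 20^\alpha} + 2^{-\alpha} - \frac{1}{1-2\alpha} \ge \frac{1}{50}.
\]
At $\alpha = 0$, $F_1(0) = \tfrac{1}{6}$, while at $\alpha = \tfrac{1}{24}$ one computes $F_1 \approx 0.027 > \tfrac{1}{50}$; since $F_1$ is smooth and monotone on this small interval (which one can confirm by inspecting the derivative, or just by a crude finite-sample check combined with uniform Lipschitz estimates on each term), the bound holds throughout $(0,\tfrac{1}{24})$.

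For part~(b), under $x_1 \le x_2 \le \delta_\alpha$ the analogous decomposition and lemmas yield
\[
u_2(x) \ge \frac{x_2^{1-2\alpha}}{\alpha}\left[\frac{1}{5\cdot 8^\alpha} + 2^{-\alpha} - \frac{1}{1-2\alpha}\right],
\]
and one must check
\[
F_2(\alpha) := \frac{1}{5\cdot 8^\alpha} + 2^{-\alpha} - \frac{1}{1-2\alpha} \ge \frac{1}{50}.
\]
Here $F_2(0) = \tfrac{1}{5}$ and $F_2(\tfrac{1}{24}) \approx 0.064$, so the bound holds on $(0,\tfrac{1}{24})$ by the same reasoning as above.

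I do not expect any real obstacle; the only substantive point is that the constraint $\alpha < \tfrac{1}{24}$ in the statement is precisely what is needed for the ``good'' contribution to beat the ``bad'' one with room to spare (specifically with the fixed factor $1/50$). Larger $\alpha$ would require sharper versions of the previous two lemmas. For bookkeeping, the final $\delta_\alpha$ should be chosen as the minimum of the two $\delta_\alpha$'s from Lemma~\ref{lemma:u1_good}; no further smallness is required, since $F_1$ and $F_2$ depend on $\alpha$ alone.
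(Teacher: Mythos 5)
Your proposal is correct and is essentially the paper's own proof: both decompose $u_i = u_i^{bad}+u_i^{good}$, add the bounds from the two preceding lemmas, and reduce the claim to the numerical inequality $\frac{1}{1-2\alpha}-2^{-\alpha}-\frac{1}{6\cdot 20^{\alpha}}\le -\frac{1}{50}$ on $(0,\frac{1}{24}]$, which holds since each term of your $F_1$ is manifestly decreasing in $\alpha$ and $F_1(\frac{1}{24})>\frac{1}{50}$ (the paper handles (b) by simply noting $5\cdot 8^{\alpha}<6\cdot 20^{\alpha}$ rather than recomputing $F_2$).
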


\begin{proof}
(a)  This is immediate from the last two lemmas and $u_1=u_1^{bad}+u_1^{good}$
because
\[
-\frac 1{6\cdot 20^\alpha} + \left(\frac{1}{1-2\alpha} -2^{-\alpha}\right)
\]
is increasing in $\alpha$ and its value for $\alpha= \frac 1{24}$ is less than $-1/{50}$.

(b)  Since $5\cdot 8^\alpha < 6\cdot 20^\alpha$, this is analogous to (a).
\end{proof}

\subsection{The finite time singularity analysis}
\label{sec:blowup}

 Let us now return to the setting from the beginning of this section.
The initial condition  we consider  is odd in $x_1$, and  the resulting unique~$H^3$ patch solution is also odd.
We will run the blow-up argument in the class of the less regular $C^{1,\gamma}$ patch solutions to \eqref{sqg}-\eqref{eq:velocity_law}, and
show that any such solution either has a finite maximal time of existence (i.e., loss of existence) or stops being odd (i.e., loss of uniqueness).  Of course, the latter cannot happen for the $H^3$ patch solution.

\begin{theorem} \label {T.6.1}
Let $\alpha\in(0,\frac 1{24})$ and $\eps>0$ be small enough.  Let $\omega(\cdot,0)$ be
given by~\eqref{def:omega_0}, with a bounded open $\Omega_0\subseteq D^+$ such that
$(2\eps,3)\times(0,3)\subseteq \Omega_0 \subseteq (\eps,4)\times(0,4)$ and $\partial\Omega_0$ is a smooth simple closed curve.
Then for any $\gamma> \frac{2\alpha}{1-2\alpha}$, there is no odd-in-$x_1$ $C^{1,\gamma}$ patch solution $\omega$ to \eqref{sqg}-\eqref{eq:velocity_law} on any interval $[0,T')$ with $T'>50(3\eps)^{2\alpha}$.
\end{theorem}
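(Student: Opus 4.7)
The plan is to argue by contradiction: suppose an odd-in-$x_1$ $C^{1,\gamma}$ patch solution exists on $[0,T']$ with $T'>T:=50(3\eps)^{2\alpha}$, and prove the geometric containment $K(t)\subseteq\Omega(t)$ for every $t\in[0,T]$. Granting this, $X(T)=0$ forces the origin to belong to $\overline{K(T)}\subseteq\overline{\Omega(T)}$, and by oddness in $x_1$ also to $\overline{\tilde\Omega(T)}$, violating the disjointness-of-closures clause in Definition \ref{D.1.1}. Throughout, Theorem \ref{T.1.1bis}(a,b) applies because $\alpha<\tfrac14$ and $\gamma>\tfrac{2\alpha}{1-2\alpha}$: we obtain the bijective flow map $\Phi_t:\bar D\setminus\partial\Omega(0)\to\bar D\setminus\partial\Omega(t)$ together with the boundary-tracking identity $\Phi_t(\partial\Omega_k(0))=\partial\Omega_k(t)$.

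To establish $K(t)\subseteq\Omega(t)$ I would run a bootstrap. Let
\[
t^*:=\sup\{t\in[0,T]:K(s)\subseteq\Omega(s)\text{ for all }s\in[0,t]\}.
\]
The strict inclusion $\overline{K(0)}\subseteq\overline{\Omega_2}\subseteq\Omega_0$, together with the Hausdorff-continuity of $\partial\Omega(t)$ demanded by Definition \ref{D.1.1}, gives $t^*>0$. Suppose for contradiction $t^*<T$. Taking $\eps$ small enough that $T$ is tiny, Lemma \ref{lemma:uniform_u_bound} gives $\|u\|_{L^\infty}\le C_\alpha$, so $\partial\Omega(t)$ remains within Hausdorff distance $C_\alpha T\ll 1$ of $\partial\Omega(0)$ on $[0,T]$. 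Since $\Omega_0\supseteq(2\eps,3)\times(0,3)$ is at distance at least $1$ from both the right edge $\{2\}\times[0,2]$ of $\overline{K(t)}$ and from the upper-diagonal segment $\{(s,s):s\in[\delta_\alpha,2]\}$, those pieces of $\partial K(t)$ stay strictly inside $\Omega(t)$ for every $t\in[0,T]$. Hence any touching point $x^*\in\overline{K(t^*)}\cap\partial\Omega(t^*)$ obstructing the extension of $t^*$ must lie either on the open left vertical edge $\{X(t^*)\}\times(0,X(t^*))$, on the lower diagonal $\{(s,s):s\in(X(t^*),\delta_\alpha]\}$, or at the corner $(X(t^*),X(t^*))$. (Touches on the $x_1$-axis portion of $\overline{K(t^*)}$ are automatically harmless because $K(t)$ is open with $x_2>0$ and $\{x_2=0\}=\partial D$ is flow-invariant thanks to $u_2\equiv 0$ there.)

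The contradiction at $x^*$ comes from combining the flow with Proposition \ref{P.6.3}. By Theorem \ref{T.1.1bis}(b), select $y^*\in\partial\Omega(0)$ and a trajectory $\Phi_s(y^*)\in\partial\Omega(s)$ for all $s\in[0,t^*]$ with $\Phi_{t^*}(y^*)=x^*$. A direct geometric check using $X(t^*)+1<2$ shows that in each of the three sub-cases the triangle $A(x^*)$ from (\ref{eq:def_A}) is contained in $K(t^*)$, hence in $\overline{\Omega(t^*)}$, so $\omega(\cdot,t^*)\ge\chi_{A(x^*)}$ a.e.\ because $\partial\Omega(t^*)$ has measure zero. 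Proposition \ref{P.6.3}(a) yields $u_1(x^*,t^*)\le -\tfrac{1}{50\alpha}X(t^*)^{1-2\alpha}=2X'(t^*)$ in the left-edge and corner cases, while \ref{P.6.3}(b) yields $u_2(x^*,t^*)\ge\tfrac{1}{50\alpha}(x_1^*)^{1-2\alpha}>0$ in the diagonal and corner cases. A first-order Taylor expansion of the trajectory near $t^*$ (using continuity of $u$ from Lemma \ref{lemma:uniform_u_bound}) then gives, for $t=t^*-\eta$ with $\eta>0$ sufficiently small,
\[
\Phi_t(y^*)_1>X(t),\qquad 0<\Phi_t(y^*)_2<\Phi_t(y^*)_1<2,
\]
placing $\Phi_t(y^*)\in K(t)\subseteq\Omega(t)$ by the definition of $t^*$. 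This contradicts $\Phi_t(y^*)\in\partial\Omega(t)$ and forces $t^*=T$.

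I expect the main obstacle to be the simultaneous handling of $u_1$ and $u_2$ at the diagonal corner $(X(t^*),X(t^*))$, where one needs both $|u_1|\ge 2|X'|$ to push the trajectory strictly right of $X(t)$ for $t<t^*$, and $u_2>0$ to push it strictly below the diagonal, with the margins in both directions surviving the first-order Taylor expansion. Secondary subtleties I would watch carefully are the precise justification that the sup $t^*$ is obstructed only by a genuine touching on $\partial K(t^*)\setminus\{x_2=0\}$ (exploiting that $K(t)$ is open and that the $x_1$-axis corners of $\overline{K(t)}$ are on $\partial D$), and the use of Theorem \ref{T.1.1bis}(b), which requires exactly $\gamma>\tfrac{2\alpha}{1-2\alpha}$ and is the reason the blow-up argument is run in the $C^{1,\gamma}$ class rather than in $H^3$ alone.
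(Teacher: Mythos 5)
Your overall strategy coincides with the paper's: a first-touching-time argument for the containment $K(t)\subseteq\Omega(t)$, the uniform bound on $\|u\|_{L^\infty}$ from Lemma~\ref{lemma:uniform_u_bound} to confine any first touch to the left edge and the lower part of the diagonal of $\partial K$, the inclusion $A(x^*)\subseteq K(t^*)\subseteq\Omega(t^*)$ feeding Proposition~\ref{P.6.3}, and a backward-in-time comparison of the resulting velocity with $X'(t^*)$ (with the same factor-of-two margin). The one structural difference is that you run the contradiction along a boundary trajectory supplied by Theorem~\ref{T.1.1bis}(b), whereas the paper propagates points of the complement forward via $D^+\setminus\overline{\Omega(t)}=\Phi_t(D^+\setminus\overline{\Omega(0)})$ and shows they would already have been inside $K(t_0-s)$ at slightly earlier times. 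These are equivalent in spirit, but the choice matters at exactly one point, and that is where your argument has a genuine gap.

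The gap is the parenthetical dismissal of touches on the $x_1$-axis portion of $\overline{K(t^*)}$. The open bottom edge $(X(t^*),2)\times\{0\}$ is indeed harmless, since complement points approaching it from $D^+$ would already lie in $K(t^*)\subseteq\Omega(t^*)$. But the corner $(X(t^*),0)$ is not: the set $D^+\setminus\overline{\Omega(t^*)}$ can accumulate there from the region $\{x_1\le X(t^*),\,x_2>0\}$, and as $X(t)$ decreases past $t^*$ those points can enter $K(t)$, so this corner is a legitimate obstruction that your case list must handle. Your trajectory device does not settle it: a boundary trajectory ending at a point of $\{x_2=0\}$ may lie entirely on the axis (which is invariant because $u_2\equiv 0$ there), and such a trajectory never enters the open set $K(t)\subseteq\{x_2>0\}$, so no contradiction with $\Phi_t(y^*)\in\partial\Omega(t)$ is produced; and since Theorem~\ref{T.1.1bis}(b) only guarantees \emph{some} trajectory reaching the touching point, you cannot insist on one that leaves the axis. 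The paper avoids this by including $x_2=0$ in the segment $I_1=\{X(t_0)\}\times[0,X(t_0))$ and by tracking complement points rather than boundary points: their second coordinate stays in $(0,X(t_0))$ under the backward flow precisely because the axis is invariant and the complement lies in $D^+$, and the same estimate $u_1\le 2X'(t_0)$ from Proposition~\ref{P.6.3}(a) then pushes them into $K(t_0-s)$, contradicting the minimality of $t_0$. Everything else in your outline, including the diagonal corner $(X(t^*),X(t^*))$ where both components of $u$ are used simultaneously, checks out against the paper's computation.
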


This immediately yields Theorem \ref{main1234} because the (local) $H^3$ solution for this initial condition
is odd in $x_1$ (due to of its uniqueness), and it is  $C^{1,\gamma}$ for each $\gamma\in(0,1]$.

\begin{proof}
Let us assume that such a solution exists and let  $T, X(t),K(t)$ be from \eqref{6.1}-\eqref{6.2}. The solution then has the form \eqref{6.0}, and we will show that
$K(t)\subseteq \Omega(t)$ for each $t\in[0,T]$.  This is a contradiction  because
then the patches $\Omega(T)$ and $\tilde\Omega(T)$ touch at $0$. 

As $|\Omega(t)|=|\Omega_0|\le 16$, Lemma \ref{lemma:uniform_u_bound}  implies
\begin{equation} \label{6.4}
\|u(\cdot,t)\|_{L^\infty}\le 100
\end{equation}
for all $t\in[0,T]$.  Since $\partial\Omega(t)$ is continuous in $t\in[0,T]$ with respect to
the Hausdorff distance of sets, the lemma  also shows that $u$ is continuous on $\bar D\times[0,T]$.

Consider $\delta_\alpha\in(0,1)$ from Proposition \ref{P.6.3} and let the constant $\eps$ in \eqref{6.1} satisfy
\[
\eps\le \frac{\delta_\alpha^{1/2\alpha}}{3\cdot 100^{1/\alpha}}.
\]
We know from \eqref{6.4}  that the function $f(t):={\rm dist}(D^+\setminus\overline{\Omega(t)},K(t))$ is continuous on~$[0,T]$.
Hence, if $K(t)$ is not contained in $\Omega(t)$ at some $t\in[0,T]$, then there is the first
time~$t_0\in[0,T]$ such that $f(t_0)=0$.  As $f(0)\ge\eps>0$, we have~$t_0>0$ and~$K(t_0)\subseteq \Omega(t_0)$.

Let us assume that such $t_0$ exists and let
\[
\Omega_3:=(\delta_\alpha,\frac 52)\times(0,\frac 52).
\]
Then $T= 200^{-1}\delta_\alpha$, the estimate \eqref{6.4}, and $ 2\eps< \frac 12\delta_\alpha< \frac 12$ imply
\[
[D^+\setminus\overline{\Omega(t_0)}] \cap \Omega_3=\emptyset,
\]
where we also used that symmetry and Theorem~\ref{T.1.1bis} yield
\begin{equation} \label{9.9}
D^+\setminus\overline{\Omega(t)} = \Phi_{t}(D^+\setminus \overline{\Omega(0)})
\end{equation}
 for any $t\in[0,T]$.
As $t_0$ is the first time with $f(t_0)=0$, it follows that there exists some
\begin{equation} \label{6.5}
x\in \partial[D^+\setminus\overline{\Omega(t_0)}] \cap [I_1\cup I_2],
\end{equation}
where $I_1=\{X(t_0)\}\times[0,X(t_0))$  and $I_2$ is the closed straight segment connecting the points $(X(t_0),X(t_0))$
and $(\delta_\alpha,\delta_\alpha)$ (see Figure~\ref{fig:t1}).
\begin{figure}[htbp]
\begin{center}
\includegraphics[scale=1.2]{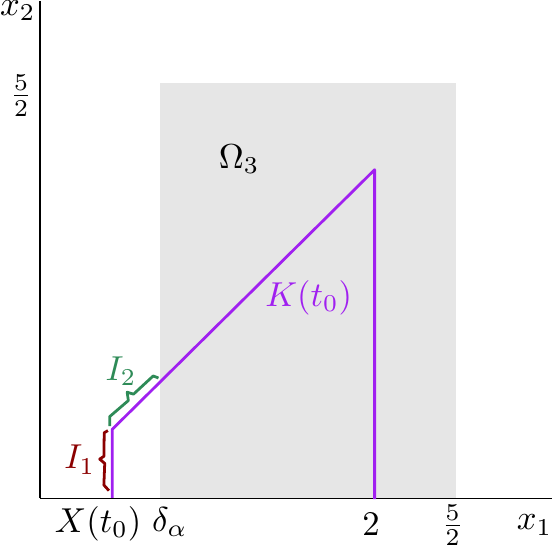}
\caption{The segments $I_1$ and $I_2$ and the sets $\Omega_3$ and $K(t_0)$.\label{fig:t1}}
\end{center}
\end{figure}

If $x\in I_1$, then the triangle $A(x)$ defined in \eqref{eq:def_A} and
depicted in Figure~\ref{fig:def_A} satisfies
\[
A(x)\subseteq K(t_0)\subseteq \Omega(t_0)
\]
because
\[
X(t_0)\le 3\eps<\delta_\alpha<1.
\]
Hence  Proposition \ref{P.6.3}(a) and $x_1=X(t_0)$ yield
\[
u_1(x,t_0)\le -\frac 1{50\alpha} x_1^{1-2\alpha} < -\frac 1{100\alpha} x_1^{1-2\alpha} = X'(t_0).
\]
Since
\[
\Phi_{t_0}(D^+\setminus \overline{\Omega(0)}) \cap B(x,r)\neq\emptyset
\]
for any $r>0$ and $u$ is continuous, it follows from this and \eqref{6.4}
that for any sufficiently small~ $s\in(0,\frac1{100}[X(t_0)-x_2])$ we have
\[
\Phi_{t_0-s}(D^+\setminus \overline{\Omega(0)})\cap [(X(t_0-s),2)\times(0,X(t_0))] \neq \emptyset.
\]
From \eqref{9.9} and $(X(t_0-s),2)\times(0,X(t_0))\subseteq K(t_0-s)$ we now obtain $f(t_0-s)=0$ for these $s$, a contradiction with the choice of $t_0$.

If now $x\in I_2$, so that $x_1=x_2\le\delta_\alpha$, a similar argument and Proposition \ref{P.6.3}(a,b) yield
\[
(-1)^{j-1}u_j(x,t_0)\le -\frac 1{50\alpha} x_1^{1-2\alpha} < -\frac 1{100\alpha} x_1^{1-2\alpha} \le  X'(t_0)
\]
for $j=1,2$, 
and thus
\[
\Phi_{t_0-s}(D^+\setminus \overline{\Omega(0)})\cap [(x_1+X(t_0-s)-X(t_0),2)\times(0,x_1-X(t_0-s)+X(t_0))] \neq \emptyset
\]
for all small enough $s>0$.  We again obtain a contradiction because $X(t_0)\le x_1=x_2$ implies $(x_1+X(t_0-s)-X(t_0),2)\times(0,x_1)\subseteq K(t_0-s)$.
\end{proof}

\end{document}